\newcommand{\bbN}{{\mathbb N}}
\newcommand{\bbR}{{\mathbb R}}
\newcommand{\bbZ}{{\mathbb Z}}
\newcommand{\bbC}{{\mathbb C}}
\newcommand{\bbT}{\mathbb{T}}
\newcommand{\supp}{\operatorname{supp}}
\newcommand{\Isom}{\operatorname{Isom}}
\newcommand{\Ker}{\operatorname{Ker}}
\newcommand{\Targ}{\operatorname{Targ}}
\newcommand{\Boxx}{\operatorname{Box}}
\newcommand{\half}{\frac{1}{2}}
\newcommand{\SL}{\operatorname{SL}}
\newcommand{\setdef}[2]{{\left\{ {#1} \ :\ {#2}\right\}}}
\newtheorem{mthm}{Theorem}
\newtheorem{theorem}{Theorem}[section]
\newtheorem{lemma}[theorem]{Lemma}
\newtheorem{cor}[theorem]{Corollary}
\newtheorem{prop}[theorem]{Proposition}
\theoremstyle{definition}
\newtheorem{definition}[theorem]{Definition}
\newtheorem{defn}[theorem]{Definition}
\newtheorem{example}[theorem]{Example}
\newtheorem{remark}[theorem]{Remark}
\begin{document}

\title{Diophantine properties of groups of toral automorphisms}

 \author{Vladimir Finkelshtein}

\begin{abstract}
	We prove sharp estimates in a shrinking target problem for the
action of an arbitrary subgroup $\Gamma$ of $\SL_2(\mathbb{Z})$ on the $2-$torus. This can also be viewed as a non-commutative Diophantine approximation
problem. The methods require constructing spectrally optimal
random walks on groups acting properly cocompactly on Gromov
hyperbolic spaces. Additionally, using Fourier analysis we give estimates for the same problem in higher dimensions.
\end{abstract}

\maketitle

\section{Introduction and Statement of Main Results}

This paper studies a certain shrinking target problem for a group $\Gamma<\SL_d(\bbZ)$ with its natural action by automorphisms on the torus $\bbT^d=\bbR^d/\bbZ^d$. Specifically, given a subgroup $\Gamma<\SL_d(\bbZ)$ we are interested in finding infinitely many solutions 
\begin{equation*}
	\setdef{ g\in \Gamma}{ \|g.x- y\|<\psi(\|g\|)}
\end{equation*}	
for a monotonically decreasing function $\psi:\bbR_+\to \bbR_+$, e.g. $\psi(R)=R^{-\alpha}$. Here, $\|x-y\|$ is the distance coming from the Euclidean norm on $\bbR^d$, and $\|g\|$ is the corresponding operator norm on $\SL_d(\bbR)$.
This can be viewed as analogous to the classical (inhomogeneous) Diophantine approximation problem that is concerned with finding solutions
to $\|q.x-y\|<\psi(|q|)$ for $q\in \bbN$ that acts by endomorphisms of $\bbT^1=\bbR/\bbZ$.

\subsection{Approximation by Lebesgue a.e. points on the two torus}\hfill{}\par
We start with the discussion of the $\Gamma$-Diophantine properties of Lebesgue almost every point. Our results have a sharp form in dimension $d=2$. Recall that $\SL_2(\bbZ)$ acts on the hyperbolic plane $(\mathbf{H}^2, d_{\mathbf{H}^2})$. Fix a point $x_0\in \mathbf{H}^2$, and denote
\[
	B_n =\setdef{g\in \Gamma}{ d(g.x_0,x_0)\le n},\qquad
	\delta_\Gamma=\limsup_{n\to\infty} \frac{1}{n}\cdot\log \# B_n.
\]
$\delta_\Gamma$ is the critical exponent of $\Gamma$. 
\begin{mthm}\label{maintheorem} 
Let $\Gamma<\SL_2(\bbZ)$ be an arbitrary subgroup. For any $y\in \bbT^2$, for Lebesgue a.e. $x\in \bbT^2$, 		the set 
\[ \setdef{g\in\Gamma}{\| g.x - y \|<\|g\|^{-\alpha}} \qquad\textrm{is} \] 
	\begin{enumerate}
		\item   finite for every $\alpha> \delta_\Gamma$,
		\item   infinite for every $\alpha< \delta_\Gamma$.
	\end{enumerate}
\end{mthm}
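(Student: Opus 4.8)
The plan is a two-sided Borel--Cantelli argument for the sets $A_g\defq\setdef{x\in\bbT^2}{\|g.x-y\|<\|g\|^{-\alpha}}$, resting on two facts: each $g\in\Gamma$ preserves Lebesgue measure $\mu$ on $\bbT^2$, so $\mu(A_g)$ is the $\mu$-mass of a metric ball of radius $\|g\|^{-\alpha}$ and hence $\mu(A_g)\asymp\|g\|^{-2\alpha}$ once $\|g\|$ is large; and $d(g.x_0,x_0)=2\log\|g\|+O(1)$, so $\#\setdef{g\in\Gamma}{\|g\|\le R}=\#B_{2\log R+O(1)}$, whence $\#B_n\le C_\varepsilon e^{(\delta_\Gamma+\varepsilon)n}$ for every $n$ and $\#B_n\ge e^{(\delta_\Gamma-\varepsilon)n}$ for infinitely many $n$. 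We assume $\Gamma$ infinite. If $\delta_\Gamma=0$ both parts are elementary: subexponential growth makes $\sum_g\|g\|^{-2\alpha}<\infty$, giving (1), while $\|g\|^{-\alpha}\to\infty$ along $\Gamma$ forces $A_g=\bbT^2$ for all but finitely many $g$, giving (2). So assume $\delta_\Gamma>0$; then $\Gamma$ is non-elementary, hence Zariski-dense in $\SL_2$, so $\Gamma\acts\bbT^2$ is ergodic. Part (1) is now immediate: for $\alpha>\delta_\Gamma$, taking $\varepsilon=\tfrac12(\alpha-\delta_\Gamma)$ and summing dyadically,
\[
\sum_{g\in\Gamma}\mu(A_g)\ \lesssim\ \sum_{j\ge0}\#\setdef{g\in\Gamma}{\|g\|<2^{j+1}}\,2^{-2\alpha j}\ \lesssim\ \sum_{j\ge0}2^{2(\delta_\Gamma+\varepsilon-\alpha)j}\ <\ \infty ,
\]
so the convergence Borel--Cantelli lemma shows $\mu$-a.e.\ $x$ lies in only finitely many $A_g$.

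For (2) fix $\alpha<\delta_\Gamma$. Since $\delta_\Gamma$ is the supremum of $\delta_{\Gamma'}$ over finitely generated $\Gamma'\le\Gamma$, and solutions in $\Gamma'$ are solutions in $\Gamma$, we may assume $\Gamma$ finitely generated with $\delta_\Gamma>\alpha$; being Zariski-dense it then acts properly cocompactly on its Gromov hyperbolic Cayley graph, and its Koopman representation $\pi_0$ on the mean-zero subspace $L^2_0(\bbT^2)$ has a spectral gap (Bourgain--Gamburd). Pick $\beta\in(\alpha,\delta_\Gamma)$ and $N_k\to\infty$ with $\#\setdef{g\in\Gamma}{\|g\|\le N_k}\ge N_k^{2\beta}$, so $m_k\defq\sum_{\|g\|\le N_k}\mu(A_g)\gtrsim N_k^{2(\beta-\alpha)}\to\infty$. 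By the Kochen--Stone divergence Borel--Cantelli lemma it suffices to prove the quasi-independence bound $\sum_{\|g\|,\|h\|\le N_k}\mu(A_g\cap A_h)\le C\,m_k^2$ for all $k$, for then $\mu(\limsup_g A_g)\ge 1/C>0$, i.e.\ a positive-measure set of $x$ has infinitely many solutions. The diagonal and near-diagonal terms (those $g\ne h$ with $\|gh^{-1}\|$ bounded) contribute $O(m_k)$, by $\mu(A_g\cap A_h)\le\min(\mu(A_g),\mu(A_h))$ and the sparsity of such pairs. For the remaining pairs, substituting $z=g.x$ gives
\[
\mu(A_g\cap A_h)=\mu\!\bigl(B(y,\|g\|^{-\alpha})\cap(gh^{-1}).B(y,\|h\|^{-\alpha})\bigr)=\mu(B_g)\mu(B_h)+\bigl\langle\pi_0(gh^{-1})(\mathbf 1_{B_h})_0,(\mathbf 1_{B_g})_0\bigr\rangle ,
\]
$(\cdot)_0$ being projection onto $L^2_0(\bbT^2)$ and $B_g\defq B(y,\|g\|^{-\alpha})$; the main terms sum to $\le m_k^2$, so the whole matter reduces to a decorrelation estimate for the matrix coefficients $\langle\pi_0(gh^{-1})(\mathbf 1_{B_h})_0,(\mathbf 1_{B_g})_0\rangle$ summed over $\|g\|,\|h\|\le N_k$.

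This decorrelation estimate is the technical heart and the step I expect to be hardest: each $\pi_0(gh^{-1})$ is unitary, so its matrix coefficients individually do not decay, and one needs a power saving \emph{on average over $\Gamma$} with the \emph{sharp} counting exponent $2\delta_\Gamma$ --- a crude term-by-term bound loses too much. This is exactly what the spectrally optimal random walk is for. One proves an effective equidistribution theorem for ball averages, of the shape
\[
\Bigl\|\ \frac{1}{\#B_n}\sum_{g\in B_n}f\circ g-\int_{\bbT^2}f\,d\mu\ \Bigr\|_{L^2(\bbT^2)}\ \le\ C\,e^{-\theta n}\,\mathcal S(f)\qquad(\theta>0,\ \mathcal S\ \text{a Sobolev norm}),
\]
by expanding the ball average in powers of a random-walk operator $\pi_0(\mu_\Gamma)$, where the symmetric generating measure $\mu_\Gamma$ on $\Gamma$ is chosen so that its time-$n$ range is comparable, in the hyperbolic metric on the Cayley graph, to $B_{\asymp n}$ while its spectral radius $\rho(\mu_\Gamma)$ is as small as the group permits; combined with the Bourgain--Gamburd gap of $\pi_0$ this produces $\theta>0$, and it is the optimality of $\rho(\mu_\Gamma)$ that makes the resulting threshold exactly $\delta_\Gamma$ and not a smaller exponent. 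Using the effective equidistribution to extract the cancellation in the matrix-coefficient sum over $\Gamma$ --- after smoothing each $\mathbf 1_{B_g}$ at scale $\asymp\|g\|^{-\alpha}$ --- yields $\sum_{\|g\|,\|h\|\le N_k}\mu(A_g\cap A_h)\le C\,m_k^2$, hence $\mu(\limsup_g A_g)>0$ for every $\alpha<\delta_\Gamma$.

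It remains to upgrade positive measure to full measure. Set $\Omega_\alpha=\setdef{x\in\bbT^2}{\|g.x-y\|<\|g\|^{-\alpha}\text{ for infinitely many }g\in\Gamma}$. For $\alpha''<\alpha$ and $h\in\Gamma$ one has $h.\Omega_\alpha\subseteq\Omega_{\alpha''}$: if $g.x$ is $\|g\|^{-\alpha}$-close to $y$, then $(gh^{-1}).(h.x)=g.x$ is $\|gh^{-1}\|^{-\alpha''}$-close to $y$ for all but finitely many such $g$, since $\|gh^{-1}\|\asymp\|g\|$ (using $\|h^{-1}\|=\|h\|$) and $\|g\|^{\alpha-\alpha''}\to\infty$. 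Hence $\Psi_\alpha\defq\bigcup_{h\in\Gamma}h.\Omega_\alpha$ is $\Gamma$-invariant and $\Psi_\alpha\subseteq\Omega_{\alpha''}$ for every $\alpha''<\alpha$. Since $\mu(\Omega_\alpha)>0$ by the previous step, ergodicity of $\Gamma\acts\bbT^2$ forces $\mu(\Psi_\alpha)=1$, hence $\mu(\Omega_{\alpha''})=1$ for all $\alpha''<\alpha$; letting $\alpha\uparrow\delta_\Gamma$ gives $\mu(\Omega_{\alpha''})=1$ for every $\alpha''<\delta_\Gamma$, which is (2). (If the decorrelation estimate is obtained with a power saving, one gets $\Omega_\alpha$ conull directly from a summable variance bound, bypassing this zero--one step.)
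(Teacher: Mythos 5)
Part~(1) is fine and matches the paper's first Borel--Cantelli argument. Part~(2) has two genuine gaps. The first is in the reduction: Sullivan's $\delta_\Gamma=\sup\{\delta_H:H\le\Gamma\ \text{f.g.}\}$ only gets you to a finitely generated $\Gamma'$, and such a group may still contain parabolics (take $\Gamma=\SL_2(\bbZ)$ itself). The spectral estimate then has to be proved for shells in the orbit metric $d(g,h)=d_{\mathbf{H}^2}(gx_0,hx_0)=2\log\|g^{-1}h\|+O(1)$, because $\delta_\Gamma$ and the sets $A_g$ are defined through $\|g\|$. But when $\Gamma'$ has parabolics the $\mathbf{H}^2$-action is not cocompact, so the Patterson--Sullivan/shadow machinery does not apply to these shells; and if you retreat to the Cayley graph (which is indeed Gromov hyperbolic, $\Gamma'$ being virtually free), the word metric is not quasi-isometric to the orbit metric --- a parabolic $p$ has $d(p^n,e)\asymp 2\log n$ while its Cayley length is $\asymp n$ --- so the word-metric growth exponent need not equal $\delta_\Gamma$ and the shells you average over are not the ones in the theorem. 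The paper does a further reduction exactly here: intersect with the congruence subgroup $G(2)$, pass to its commutator subgroup to kill parabolics (Lemma~\ref{noparabolics}), invoke Stadlbauer's theorem (a normal subgroup with amenable quotient has the same critical exponent) to keep $\delta$, then apply Sullivan once more to land in a finitely generated convex cocompact $\Gamma_\epsilon$. Without this step your decorrelation machinery has no legitimate hyperbolic space to run on.

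The second gap is that the ``effective equidistribution of ball averages'' is announced but not proved, and Bourgain--Gamburd is not the right engine: it yields \emph{some} gap $\|\pi_0(\mu)\|<1$ with no relation to $\delta_\Gamma$, hence cannot deliver the exponent $\approx\tfrac12\delta_\Gamma$ you need. The paper's route is different and, at this point, simpler: Theorem~\ref{toruskoopman} shows $\|\pi_0(\mu)\|=\|\lambda_\Gamma(\mu)\|$ for \emph{every} $\mu$ (Fourier-decompose $L^2_0(\bbT^2)$ into $\ell^2(\Gamma/\Stab(v))$ pieces with amenable stabilizers and apply Kesten), removing the torus from the picture entirely, and Theorem~\ref{mainradius} gives the sharp bound $\|\lambda(\mu_n)\|\le e^{-\frac12\delta_\Gamma n+2\log n+O(1)}$ for uniform measures on orbit-metric shells via the boundary representation and a Gershgorin estimate on a shadow matrix. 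Finally, even granting your decorrelation bound, the Kochen--Stone route plus an ergodicity upgrade is a heavier path than the paper's: with $C_n=\Targ_{\psi(e^n)}$ and $E_n=\bbT^2\setminus\bigcup_{\|g\|\le e^n}g^{-1}C_n$ one has $g^{-1}C_n\cap E_n=\emptyset$ for every $g\in\supp(\mu_{2n})$, whence $m(C_n)m(E_n)=\langle\pi_0(\mu_{2n})h_n,f_n\rangle\le\|\pi_0(\mu_{2n})\|\,m(C_n)^{1/2}m(E_n)^{1/2}$ in one line, and $\sum_n m(E_n)<\infty$ follows directly --- giving full measure (and Remark~\ref{strongerapprox}) without any second-moment computation or zero--one step.
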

\begin{remark}
Similar shrinking target problems were previously studied by multiple authors. Laurent and Nogueira in \cite{LaurentNogueira} found sharp approximation rates for $\SL_2(\bbZ)$ action on the plane by explicitly constructing the solutions. Less sharp bounds were given for lattices of $\SL_2(\bbR)$ acting on the plane by Maccourant and Weiss in \cite{Maccourant-Weiss} and 
for the $\SL_2(\bbC)$ action on  the complex plane by Policott in \cite{Policott} using effective equidistribution results. Ghosh, Gorodnik and Nevo in \cite{Ghosh-Gorodnik-Nevo} considered a more general setting, where a lattice $\Gamma<G$ acts on a homogenous space $G/H$, with a dense $\Gamma$-orbit. As a corollary, they established Theorem~\ref{maintheorem} for $\Gamma=\SL_2(\bbZ)$ for a.e $y\in \bbT^2$. All of the listed results assumed the acting group to be a lattice. So the main novelty in our work is in treating arbitrary, in particular, \textbf{thin subgroups} $\Gamma$ in $\SL_2(\bbZ)$.
\end{remark}

The proof proceeds via a reduction to subgroups of $\SL_2(\bbZ)$ whose action on the hyperbolic plane is convex cocompact. For such groups we have even sharper estimates below. 
Let us replace the balls around $y\in\bbT^2$ by an arbitrary monotonic family of targets $\{ \Targ_r\}_{r>0}$ of Lebesgue subsets of the torus with measure $m(\Targ_r)=\pi r^2$.
 
\begin{mthm}\label{maintheoremB} 
Let $\Gamma<\SL_2(\bbZ)$ be a subgroup whose action on the hyperbolic plane is convex cocompact. Let $\{ \Targ_r\}_{r>0}$ be a monotonic family of Lebesgue subsets of the torus of measure $m(\Targ_r)=\pi r^2$. Let $\psi:\bbR_+\to \bbR_+$ be a decreasing function.
Then for Lebesgue-a.e. $x\in \bbT^2$ the set 
\[
	\setdef{ g\in \Gamma}{ g.x \in \Targ_{\psi(\|g\|)}} \qquad\textrm{is}
\]
\begin{enumerate}
\item finite, if 
\[
	\sum_{n=1}^\infty n^{2 \delta_\Gamma -1}\psi(n)^2<\infty, 
\]
\item infinite if 
\[
	\sum_{n=1}^\infty (\log n)^4 n^{-2\delta_\Gamma -1} \psi(n)^{-2}<\infty.
\]
\end{enumerate}
\end{mthm}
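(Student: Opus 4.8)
The plan is to handle the two halves of the statement separately: finiteness by the convergence Borel--Cantelli lemma, and infiniteness by a second-moment (quasi-independence) argument whose engine is a rapid-decay estimate for the action of $\Gamma$ on $L^2_0(\bbT^2)$.

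\emph{Finiteness.} Put $E_g\defq\setdef{x\in\bbT^2}{g.x\in\Targ_{\psi(\|g\|)}}$. Since $g\in\SL_2(\bbZ)$ preserves Lebesgue measure $m$, we have $m(E_g)=m(\Targ_{\psi(\|g\|)})=\pi\,\psi(\|g\|)^2$. Because $\Gamma$ is convex cocompact, $d_{\bbH^2}(x_0,g.x_0)=2\log\|g\|+O(1)$ together with the orbit-counting estimates for geometrically finite groups gives $\#\setdef{g\in\Gamma}{\|g\|\le R}\asymp R^{2\delta_\Gamma}$, and the hyperbolic shell $\setdef{g}{e^{j/2}\le\|g\|<e^{(j+1)/2}}$ has $\asymp e^{\delta_\Gamma j}$ elements. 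An Abel-summation argument using only the upper bound and the monotonicity of $\psi$ then yields $\sum_{g\in\Gamma}m(E_g)\ll\sum_{n\ge1}n^{2\delta_\Gamma-1}\psi(n)^2<\infty$, so the convergence Borel--Cantelli lemma shows that $m$-a.e.\ $x$ lies in only finitely many $E_g$.

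\emph{Infiniteness.} The hypothesis forces $A_R\defq\sum_{\|g\|\le R}m(E_g)\to\infty$ (a Cauchy--Schwarz comparison with the harmonic series). Let $F_R(x)\defq\#\setdef{g\in\Gamma}{\|g\|\le R,\ x\in E_g\}$, a nondecreasing function of $R$ with $\int F_R\,dm=A_R$; by Chebyshev's inequality and the convergence Borel--Cantelli lemma applied along a sparse sequence $R_k\to\infty$, it suffices to prove $\sum_k V_{R_k}/A_{R_k}^2<\infty$, where $V_R\defq\int(F_R-A_R)^2\,dm=\sum_{\|g\|,\|h\|\le R}\bigl(m(E_g\cap E_h)-m(E_g)m(E_h)\bigr)$: this gives $F_{R_k}(x)\ge\tfrac12 A_{R_k}$ for all large $k$ and $m$-a.e.\ $x$, so the set in question is infinite a.e. Writing $\mathbf 1_{E_g}=\pi(g^{-1})\mathbf 1_{\Targ_{\psi(\|g\|)}}$ for the Koopman representation $\pi$ of $\Gamma$ on $L^2(\bbT^2)$ and splitting off the mean, $\mathbf 1_{\Targ_{\psi(\|g\|)}}=m(\Targ_{\psi(\|g\|)})+\phi_g$ with $\phi_g\in L^2_0(\bbT^2)$, $\|\phi_g\|_2^2\le m(\Targ_{\psi(\|g\|)})$ and $\|\phi_g\|_\infty\le1$, a direct computation gives $m(E_g\cap E_h)-m(E_g)m(E_h)=\langle\phi_g,\pi(gh^{-1})\phi_h\rangle$, hence
\[
	V_R=\Bigl\|\sum_{\|g\|\le R}\pi(g^{-1})\phi_g\Bigr\|_2^2 .
\]
Cutting the ball into hyperbolic shells $S_j=\setdef{g\in\Gamma}{e^{j/2}\le\|g\|<e^{(j+1)/2}}$ and applying the triangle inequality reduces matters to a per-shell bound of the form
\[
	\Bigl\|\sum_{g\in S_j}\pi(g^{-1})\phi_g\Bigr\|_2\ \ll\ (1+j)^{C}\Bigl(\sum_{g\in S_j}\|\phi_g\|_2^2\Bigr)^{1/2}
\]
for a fixed exponent $C$. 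Granting this, using $\#S_j\asymp e^{\delta_\Gamma j}$ and monotonicity of $\psi$, one gets $V_R\ll\bigl(\sum_j(1+j)^{C}\psi(e^{j/2})\,e^{\delta_\Gamma j/2}\bigr)^2$ while $A_R\gg\sum_j\psi(e^{(j+1)/2})^2 e^{\delta_\Gamma j}$; taking $R_k=e^{k/2}$, a summation-by-parts/dominant-term estimate bounds $\sum_k V_{R_k}/A_{R_k}^2$ by a constant multiple of $\sum_j j^{\,2C+2}\,e^{-\delta_\Gamma j}\,\psi(e^{j/2})^{-2}$, which is precisely the $e^{1/2}$-adic repackaging of the hypothesis $\sum_n(\log n)^4 n^{-2\delta_\Gamma-1}\psi(n)^{-2}<\infty$ provided $2C+2\le4$; the relevant exponent is $C=1$ (the rapid-decay property of $\Gamma$, e.g.\ Haagerup's inequality). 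This completes the infiniteness direction.

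\emph{The main obstacle} is the per-shell estimate displayed above: a vector-valued rapid-decay (Haagerup-type) inequality for the Koopman action on $L^2_0(\bbT^2)$, with a \emph{linear} loss in the shell radius and the \emph{optimal} exponential constant $\asymp e^{\delta_\Gamma j/2}=(\#S_j)^{1/2}$. The reduction to $\Gamma$ itself rests on the fact that a convex cocompact subgroup of $\SL_2(\bbZ)$ contains no parabolic elements, hence acts \emph{freely} on $\bbZ^2\smallsetminus\{0\}$, so $L^2_0(\bbT^2)\cong\bigoplus_{\mathcal O}\ell^2(\mathcal O)$ is a countable multiple of the regular representation $\lambda_\Gamma$; the required inequality then becomes a statement about convolution operators supported on $\|\cdot\|$-shells of $\Gamma$ acting on $\ell^2(\Gamma)$, for which one must pass from $\|\cdot\|$-shells to word-metric shells (Milnor--\v Svarc) and keep the constant sharp. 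Establishing this optimally-weighted rapid-decay bound is exactly what the construction of spectrally optimal random walks on groups acting properly cocompactly on Gromov-hyperbolic spaces is designed to achieve, and I expect essentially all of the difficulty to be concentrated there; the power $(\log n)^4$ in the divergence criterion is then the combined cost of the (squared) linear rapid-decay loss and the logarithmic loss inherent in the Borel--Cantelli summation.
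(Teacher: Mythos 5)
Your finiteness argument is the same first Borel--Cantelli computation as the paper's, and is fine. For the divergence half, however, you take a genuinely different route, and the comparison is instructive.

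You use the classical second-moment (quasi-independence) Borel--Cantelli: set $F_R=\sum_{\|g\|\le R}\mathbf 1_{E_g}$, compute $V_R=\bigl\|\sum_g\pi(g^{-1})\phi_g\bigr\|_2^2$, and try to make $\sum_k V_{R_k}/A_{R_k}^2$ converge. This reduces to a \emph{weighted, vector-valued} rapid-decay inequality over each shell $S_j$, with arbitrary mean-zero coefficient functions $\phi_g$. The paper does something sharper and simpler: it fixes a scale $n$, defines the \emph{bad set} $E_n=\bbT^2\setminus\bigcup_{\|g\|\le e^n}g^{-1}\Targ_{\psi(e^n)}$, observes that for any $g\in\supp\mu_{2n}$ the inner product $\langle\pi_0(g)h_n,f_n\rangle$ equals $m(C_n)m(E_n)$ exactly (no decoupling of pairs needed), and extracts $m(E_n)^{1/2}\le\|\pi_0(\mu_{2n})\|\,m(C_n)^{-1/2}$ from a single Cauchy--Schwarz. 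Summing $m(E_n)$ gives the $(\log n)^4$. The only spectral input this needs is the scalar bound $\|\pi_0(\mu_{2n})\|\le e^{-\delta_\Gamma n+2\log n+O(1)}$ for the \emph{uniform} measure on a shell (Theorems~\ref{T:specialspec}/\ref{specradthm}), which the paper proves via Gershgorin on a finite matrix. As a bonus, the paper's argument yields the stronger ``eventually at every scale'' statement of Remark~\ref{strongerapprox}, which a variance argument does not give you for free.

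The genuine gap in your proposal is step~(3): the per-shell estimate with exponent $C=1$. The paper's spectral theorem is \emph{not} a vector-valued rapid-decay inequality with arbitrary coefficients; read as an RD-type statement for the indicator of a shell, $\|\lambda(\mu_n)\|\le n^2e^{-\frac12\delta_\Gamma n+O(1)}$ corresponds to exponent $C=2$, not $C=1$. Your appeal to Haagerup's inequality with linear constant is valid for word-metric \emph{spheres} in a free group, but your shells $S_j$ live in the orbit metric $d(g,e)=2\log\|g\|+O(1)$; under a Milnor--\v{S}varc quasi-isometry with constants $(\lambda,c)$, a width-$O(1)$ orbit shell at radius $j$ spreads across $\asymp j$ word-metric spheres, and reassembling Haagerup sphere by sphere via Cauchy--Schwarz costs another factor $j^{1/2}$, giving $C=3/2$ rather than $C=1$ by this route. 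So either you must prove the linear-loss Haagerup inequality directly in the orbit metric (a nontrivial task the paper does not undertake), or accept a larger $C$ and a worse power of $\log$. Separately, the passage from the per-shell bound to $\sum_k V_{R_k}/A_{R_k}^2<\infty$ is stated as ``a summation-by-parts/dominant-term estimate'' but never carried out; since $A_{R_k}$ can grow as slowly as $\sim k^5$ at the borderline of your hypothesis, this step needs to be checked carefully and is sensitive to the exact value of $C$. In short: the strategy is plausible and standard, but the key spectral input you invoke is not what the paper establishes, is not obviously available with the constant you need, and the final bookkeeping is unverified.
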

\begin{remark}
The rates in Theorem~\ref{maintheoremB} are sharper than in Theorem~\ref{maintheorem}. For example, (1) holds for $\psi(R)=R^{-\delta_\Gamma}\log^{-0.5-\epsilon}R$, while (2) holds for $\psi(R)=R^{-\delta_\Gamma}\log^{2.5+\epsilon}R$  for any $\epsilon>0$.
\end{remark}

\begin{remark}\label{strongerapprox}
There is a strictly stronger version of Diophantine approximation, which also holds in our situation. Namely, if $\psi$ is as in Theorem~\ref{maintheoremB}(2), then for any $y\in \bbT^2$, for Lebesgue a.e. $x\in \bbT^2$, for any sufficiently large $T$, there exists $g \in \Gamma$ satisfying
\[
\| g\| \leq T, \qquad \|g.x-y\| \leq \psi(T)
\]
It can be seen immediately from the proof of Theorem~\ref{maintheoremB}(see Remark~\ref{strongerapproxexplain}).
\end{remark}

The finiteness part follows from the first Borel-Cantelli lemma.
The classical independence assumption in the second Borel-Cantelli lemma is often replaced by decay of correlations conditions. In our case, this role is played by spectral estimates for the $\Gamma$-action on the torus $\bbT^2$ as discussed in \S~\ref{spectralestimates}. 

\subsection{Approximation by Diophantine points}\label{diophpoints}\hfill{}\par
For a subgroup $\Gamma<\SL_d(\bbZ)$, $\alpha>0$, and points $x,y\in\bbT^d$, we say that $y$ admits $(\Gamma,\alpha)$-\textit{fast approximation by} $x$ if
\[
	\setdef{g\in \Gamma}{ \|g.x-y\|<\|g\|^{-\alpha}}\qquad\textrm{is\ infinite}.
\]
Theorem~\ref{maintheorem} shows that every $y\in\bbT^2$ is $(\Gamma,\delta_\Gamma-\epsilon)$-fast approximable by Lebesgue a.e. $x\in \bbT^2$ for any $\epsilon>0$. 
In this section we try to analyze how big is the exceptional set of $x\in \bbT^2$, which fail to provide fast approximations for all points $y$ on the torus. We work with subgroups $\Gamma<\SL_d(\bbZ)$ acting on the $d$-torus $\bbT^d$ with $d\ge 2$.

For $q\in \bbN$, denote by $R_q\subset \bbT^d$ the set $\frac{1}{q}\cdot \bbZ^d + \bbZ^d$ -- points with rational coordinates with denominators dividing $q$,
and by $R=\bigcup R_q$ the set of all rational points. We say that a point $x\in\bbT^d$ is $M$-\textit{Diophantine} if there are only finitely many $q\in\bbN$ so that
$x$ is $q^{-M}$-close to a point in $R_q$. Points that are not $M$-Diophantine for any $M$, are called \textit{Liouville}.

Note that rational points $x\in\bbT^d$ have finite $\Gamma$-orbits on the torus (because each $R_q$ is $\SL_d(\bbZ)$-invariant), and so any $y\in \bbT^d \setminus R$ does not admit $(\Gamma,\epsilon)$-fast approximation by $x\in R$ for any $\epsilon>0$.

We want to establish a relation between $(\Gamma,\alpha)-$fast approximability by $x\in \bbT^d$ and Diophantine properties of $x$. We use the results of \cite{BFLM}. 
For $d\ge 3$ we need to impose the following conditions on $\Gamma<\SL_d(\bbZ)$:
\begin{itemize}
	\item[(SI)] $\Gamma$ acts strongly irreducibly on $\bbR^d$, i.e. every subgroup of finite index in $\Gamma$ preserves no non-trivial vector subspaces. 
	\item[(PE)] $\Gamma$ has a proximal element, i.e. an element with a simple dominant eigenvalue.
\end{itemize}
These conditions are automatically satisfied by any $\Gamma<\SL_2(\bbZ)$ with $\delta_\Gamma>0$.

\begin{mthm}\label{wellapproximable}
	Let $\Gamma<\SL_d(\mathbb{Z})$ satisfy (SI) and (PE). Then there exists $C_\Gamma > 0$, 
	such that for every $M>0$, any point $y\in\bbT^d$  is $(\Gamma,\frac{C_\Gamma}{M})$-fast approximable by any $M$-Diophantine point $x\in\bbT^d$.
\end{mthm}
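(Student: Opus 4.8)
The plan is to combine the effective equidistribution theorem for toral random walks of Bourgain--Furman--Lindenstrauss--Mozes \cite{BFLM} with a Fourier-analytic shrinking-target estimate. I would fix once and for all a symmetric, finitely supported probability measure $\mu$ on $\Gamma$ with $\langle\supp\mu\rangle=\Gamma$; since (SI) and (PE) are properties of $\Gamma$, the pair $(\Gamma,\mu)$ satisfies the standing hypotheses of \cite{BFLM}. Write $A:=\log\max_{s\in\supp\mu}\|s\|>0$, so that every $g\in\supp(\mu^{*n})$ obeys $\|g\|\le e^{An}$. The input from \cite{BFLM} is that there are constants $\lambda=\lambda(\Gamma)>0$ and $c_0=c_0(\Gamma)\ge 1$ such that, for all sufficiently large $n$, every $x\in\bbT^d$, and every $a\in\bbZ^d$ with $0<\|a\|\le e^{\lambda n}$, at least one of the following holds:
\begin{enumerate}
\item $|\widehat{\mu^{*n}*\delta_x}(a)|\le e^{-\lambda n}$; or
\item there is a rational point $p/q\in\bbQ^d$ with $1\le q\le\|a\|^{c_0}$ and $\|x-p/q\|\le e^{-\lambda n}$.
\end{enumerate}

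\textbf{Using the Diophantine hypothesis.} Given an $M$-Diophantine $x$ (we may assume $M>1$, since otherwise every $x$ lies within $\sqrt d/(2q)$ of $R_q$ for all $q$, there are no $M$-Diophantine points, and the statement is vacuous), I would set $\beta:=\lambda/(c_0 M)$, which is $<\lambda$ because $c_0\ge1$, and show: there is $N=N(x,M)$ so that $|\widehat{\mu^{*n}*\delta_x}(a)|\le e^{-\lambda n}$ for all $n\ge N$ and all $a$ with $0<\|a\|\le e^{\beta n}$. To see this, suppose alternative (2) held for some such $n,a$; then $q\le\|a\|^{c_0}\le e^{c_0\beta n}=e^{\lambda n/M}$ and $\|x-p/q\|\le e^{-\lambda n}$. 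If $q$ stays bounded along a subsequence of admissible $n$, then $x$ is approximated arbitrarily well by rationals of bounded denominator, so $x\in R$ --- impossible, since a reduced fraction $p/q$ lies in $R_{kq}$ for all $k$ and so rational points are not $M$-Diophantine. Otherwise $q\to\infty$, and $q\le e^{\lambda n/M}$ gives $q^{-M}\ge e^{-\lambda n}\ge\|x-p/q\|$, so $x$ is $q^{-M}$-close to $R_q$ for infinitely many distinct $q$ --- again contradicting $M$-Diophantineness.

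\textbf{From equidistribution to the conclusion.} Fix $y\in\bbT^d$, put $\beta':=\beta/(2d)$ (so $\beta'd=\beta/2<\lambda$ and $0<\beta'<\beta$), and let $\rho_n:=e^{-\beta'n}$. I would take smooth $0\le\varphi_n\le\mathbf 1_{B(y,\rho_n)}$ with $\int_{\bbT^d}\varphi_n\,dm\ge c_d\rho_n^d$, with $\sum_{a\in\bbZ^d}|\widehat{\varphi_n}(a)|\le C_d$, and with Fourier mass essentially supported on $\|a\|\lesssim\rho_n^{-1}=e^{\beta'n}$ and rapidly decaying beyond. Expanding $\int\varphi_n\,d(\mu^{*n}*\delta_x)$ in a Fourier series, splitting the nonzero frequencies at $\|a\|=e^{\beta n}$, controlling the range $0<\|a\|\le e^{\beta n}$ by the previous step and the range $\|a\|>e^{\beta n}$ by the trivial bound $|\widehat{\mu^{*n}*\delta_x}(a)|\le1$ together with the rapid decay of $\widehat{\varphi_n}$ (here $\beta>\beta'$ matters), one obtains for all large $n$
\[
(\mu^{*n}*\delta_x)(B(y,\rho_n))\ \ge\ c_d\rho_n^d-C_d\,e^{-\lambda n}-o(\rho_n^d)\ >\ 0,
\]
the positivity using $\beta'd<\lambda$. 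Hence for each large $n$ there is $g_n\in\supp(\mu^{*n})$ with $\|g_n.x-y\|<\rho_n=e^{-\beta'n}$ and $\|g_n\|\le e^{An}$, so $\|g_n.x-y\|<\|g_n\|^{-\alpha}$ with $\alpha:=\beta'/A$. Since $\rho_n\to0$ and the orbit $\Gamma.x$ is infinite (a finite $\Gamma$-orbit in $\bbT^d$ consists of rational points by strong irreducibility, whereas $x\notin R$), the elements $g_n$ take infinitely many distinct values, so $\setdef{g\in\Gamma}{\|g.x-y\|<\|g\|^{-\alpha}}$ is infinite; and $\alpha=\beta'/A=\lambda/(2c_0dA\,M)$, so the theorem holds with $C_\Gamma:=\lambda/(2c_0dA)$, which depends only on $\Gamma$.

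\textbf{Main obstacle.} The only substantial ingredient is the \cite{BFLM} dichotomy, used as a black box; granting it, everything else is scale-matching. Two features of that dichotomy are genuinely needed: (i) the obstructing denominator $q$ should grow at most polynomially in $\|a\|$, which is exactly what lets the $M$-Diophantine condition exclude alternative (2) for all frequencies up to $e^{(C_\Gamma/M)n}$ and produces the factor $1/M$ in the exponent; and (ii) alternative (1) should give the quantitative decay $e^{-\lambda n}$, not merely a qualitative statement, so that the low-frequency Fourier contribution is negligible against $m(B(y,\rho_n))=c_d\rho_n^d$. The distinctness of the $g_n$ is a minor bookkeeping point, reducing to the infinitude of $\Gamma.x$, and is the only place besides the second step where $x\notin R$ (part of being $M$-Diophantine) is used.
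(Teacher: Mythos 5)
Your approach and the paper's share the same skeleton: fix a generating measure $\mu$ on $\Gamma$, quote the Bourgain--Furman--Lindenstrauss--Mozes Fourier-decay estimate for $\mu^{*n}*\delta_x$ at $M$-Diophantine $x$, and convert it into positive $(\mu^{*n}*\delta_x)$-mass of small balls around $y$ at the right scale. But you diverge at both ends. On the input side, you quote the BFLM dichotomy (small Fourier coefficient or nearby low-denominator rational) and then derive the Diophantine-specific decay; the paper instead cites a preprocessed form (their Theorem~\ref{bflm}) in which the $M$-Diophantine hypothesis is already absorbed, yielding $|\widehat{\nu_k}(b)|\le \|b\|_\infty e^{-c_2k/M}$ directly. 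On the analytic side, you expand a smooth minorant $\varphi_n$ of $\mathbf{1}_{B(y,\rho_n)}$ in Fourier series, splitting frequencies at $e^{\beta n}$ and using rapid decay of $\widehat{\varphi_n}$; the paper avoids bump functions entirely by invoking the Erd\H{o}s--Tur\'an--Koksma inequality to bound the discrepancy $D(\nu_k)$ from above, while the hypothesis "no solutions up to time $k$" forces $D(\nu_k)$ above the volume of the box, and compares the two. The ETK route is shorter and avoids tracking the Fourier tail of $\varphi_n$, but yields a contradiction rather than a direct count; yours is more self-contained if one wants to keep the whole argument elementary. Both get the same shape of constant $C_\Gamma$.

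One genuine gap: your final sentence asserts "Since $\rho_n\to0$ and the orbit $\Gamma.x$ is infinite \dots the elements $g_n$ take infinitely many distinct values," and this inference is not valid as stated. Infinitude of $\Gamma.x$ does not preclude that a single fixed $g_0$ with $g_0.x=y$ serves as $g_n$ for all $n$ (indeed if $y\in\Gamma.x$ this is exactly what can happen). What you actually need is a lower bound on $(\mu^{*n}*\delta_x)\bigl(B(y,\rho_n)\bigr)$ that beats the maximal point mass of $\mu^{*n}$, so that the mass cannot be carried by a fixed finite set such as $g_0\Stab(x)$. Your estimate $(\mu^{*n}*\delta_x)(B(y,\rho_n))\ge c_d'\rho_n^d$ does give this as long as $-\log\|\lambda_\Gamma(\mu)\| > \beta'd$, which holds for large $M$ (hence small $\beta'$) by nonamenability of $\Gamma$; for the finitely many remaining small $M$ one can first note that $M$-Diophantine implies $M'$-Diophantine for $M'>M$ and increase $M$ at the cost of shrinking $C_\Gamma$. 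The paper's proof has a parallel soft spot — its claim that the "not fast-approximable" hypothesis forces $\nu_k\bigl(\Boxx(y,\cdot)\bigr)=0$ for all large $k$ is false verbatim when $y\in\Gamma.x$ — so the case $y\in\Gamma.x$ deserves a word in either treatment. It is a repairable bookkeeping point, not a flaw in the main mechanism, but the one-line justification you give does not actually close it.

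One very minor point: your dismissal of $M\le 1$ as vacuous relies on the trivial bound $\operatorname{dist}(x,R_q)\le \tfrac12 q^{-1}$; this is clean in the $\sup$-norm (which the paper eventually adopts) but your "$\sqrt d/(2q)$" suggests you are using the Euclidean norm, in which case the reduction only works cleanly for $M<1$ and $d\le 4$ when $M=1$. Stating it in the $\sup$-norm avoids the issue.
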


\begin{cor}
	Let $\Gamma$ be as in Theorem~\ref{wellapproximable}. The set of points $x\in \bbT^d$ that do not give $(\Gamma,\epsilon)$-fast approximation 
	of all points $y\in \bbT^d$ for any $\epsilon>0$ consists only of Liouville points and, in particular, has zero Hausdorff dimension.
\end{cor}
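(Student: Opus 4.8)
The plan is to combine Theorem~\ref{wellapproximable} with a standard measure-theoretic fact: for each fixed $M>0$, the set of points $x\in\bbT^d$ that fail to be $M$-Diophantine has Lebesgue measure zero, and more precisely has Hausdorff dimension strictly less than $d$ once $M$ is large; taking the intersection over all $M$ one obtains a set of Hausdorff dimension zero, which by definition consists exactly of the Liouville points. So the argument has two halves: first, identify the exceptional set in the statement with (a subset of) the Liouville points using Theorem~\ref{wellapproximable}; second, recall why the Liouville set has Hausdorff dimension zero.

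For the first half, suppose $x\in\bbT^d$ is not Liouville. Then by definition $x$ is $M$-Diophantine for some $M>0$. Applying Theorem~\ref{wellapproximable} with this $M$, every $y\in\bbT^d$ is $(\Gamma,\tfrac{C_\Gamma}{M})$-fast approximable by $x$. Setting $\epsilon = C_\Gamma/M > 0$, this says precisely that $x$ gives $(\Gamma,\epsilon)$-fast approximation of all $y\in\bbT^d$. Hence $x$ does not lie in the exceptional set. Contrapositively, the exceptional set is contained in the set of Liouville points.

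For the second half, fix $M>0$ and let $\calL_M\subset\bbT^d$ be the set of $x$ that are $q^{-M}$-close to a point of $R_q$ for infinitely many $q$. For each $q$, the $q^{-M}$-neighborhood of $R_q$ is a union of at most $q^d$ balls of radius $q^{-M}$, so it can be covered by $\ll q^d$ sets of diameter $\ll q^{-M}$; therefore $\calL_M=\limsup_q (\text{these neighborhoods})$ has $s$-dimensional Hausdorff measure zero as soon as $\sum_q q^d\cdot q^{-Ms}<\infty$, i.e. as soon as $s > (d+1)/M$. Thus $\dim_H \calL_M \le (d+1)/M$. Since the set of Liouville points is $\bigcap_{M>0}\calL_M \subseteq \calL_M$ for every $M$, its Hausdorff dimension is at most $(d+1)/M$ for all $M$, hence equal to $0$. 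Combining with the first half, the exceptional set has Hausdorff dimension $0$, and in particular (since it is contained in the Liouville set) consists only of Liouville points.

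The only mild subtlety — hardly an obstacle — is bookkeeping the radii and cardinalities in the Hausdorff measure estimate, and noting that a countable intersection of sets of dimension $\to 0$ still has dimension $0$ (immediate from monotonicity of $\dim_H$ under inclusion). Everything else is a direct unwinding of definitions against Theorem~\ref{wellapproximable}.
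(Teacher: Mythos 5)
Your proposal is correct, and it follows the route the corollary is clearly meant to encapsulate; the paper itself states this corollary without a written proof, so you are filling in the intended argument. The first half is a direct unwinding of Theorem~\ref{wellapproximable} as you say, and the second half is the standard Jarn\'{\i}k--Besicovitch covering bound $\dim_H \calL_M \le (d+1)/M$, noting that the sets $\calL_M$ are nested decreasing in $M$ and that the Liouville set equals $\bigcap_{M>0}\calL_M$, whence the dimension is $0$. No gaps.
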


\subsection{Spectral estimates}\label{spectralestimates}\hfill{}\par
Let us now state the main spectral estimate needed for the proof of Theorem~\ref{maintheoremB}.
Let $\Gamma<\SL_2(\bbZ)$ be a subgroup whose action on the hyperbolic plane is convex cocompact. 

Let $\pi:\Gamma\to U(\mathcal{H})$ be a unitary $\Gamma$-representation, and $\mu$ a probability measure on $\Gamma$. 
Define the Markov operator on $\mathcal{H}$
\[
	\pi(\mu)=\sum_{g\in\Gamma} \mu(g)\cdot\pi(g).
\]
Note that it always satisfies $\|\pi(\mu)\|\le 1$ and if $\mu$ is symmetric, $\pi(\mu)$ is self-adjoint.
We shall denote by $\pi$ the unitary $\Gamma$-representation on $L^2(\bbT^2)$, and $\pi_0$ the sub-representation on $L^2_0(\bbT^2)$.
In the proof of Theorem~\ref{maintheoremB} we need the estimate provided in the following result.

\begin{theorem}\label{T:specialspec}
There exists a sequence of symmetric probability measures $\mu_n$ on $\Gamma$ with $\supp(\mu_n)\subset B_n$ and
	\[
		\|\pi_0(\mu_n)\|\le  e^{-\half\delta_\Gamma\cdot n + 2\log n +O(1) }.
	\] 
	In fact, the above estimate holds for $\mu_n$ being uniform measures on the shells $S_n=B_n\setminus B_{n-k}$ for some fixed $k$.
\end{theorem}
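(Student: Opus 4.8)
The plan is to reduce the estimate for $\pi_0$ to a convolution estimate for the left regular representation $\lambda_\Gamma$ of $\Gamma$ on $\ell^2(\Gamma)$, and then to obtain that estimate by combining a rapid-decay (Haagerup) inequality with sharp orbit counting; the $\mu_n$ will simply be the uniform probability measures on the shells $S_n$. For the reduction, decompose $L^2_0(\bbT^2)=\bigoplus_{\xi\in\bbZ^2\setminus\{0\}}\bbC\,e_\xi$ into characters, on which $\Gamma$ acts by the contragredient permutation action $g\cdot e_\xi=e_{(g^{T})^{-1}\xi}$. Grouping by orbits, $L^2_0(\bbT^2)\cong\bigoplus_{\mathcal O}\ell^2(\mathcal O)$ as $\Gamma$-representations, where $\mathcal O$ ranges over the $\Gamma$-orbits in $\bbZ^2\setminus\{0\}$. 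The $\Gamma$-stabiliser $\Gamma_\xi$ of a frequency consists of the identity together with parabolic elements, since the stabiliser of a nonzero vector in $\SL_2$ is a unipotent subgroup; but a convex cocompact group contains no parabolic elements, so every $\Gamma_\xi$ is trivial. Hence each orbit is a free $\Gamma$-set, $\ell^2(\mathcal O)\cong\ell^2(\Gamma)$, and $\pi_0$ is unitarily a countable multiple of $\lambda_\Gamma$. Consequently $\|\pi_0(\mu)\|=\|\lambda_\Gamma(\mu)\|$ for every probability measure $\mu$ on $\Gamma$, and it suffices to bound the convolution norm of $\mu_n$ on $\ell^2(\Gamma)$.

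Next I would use the geometry. Convex cocompactness means $\Gamma$ acts properly and cocompactly by isometries on the convex hull of its limit set; this hull is a convex, hence Gromov hyperbolic, subset of $\bbH^2$, so $\Gamma$ is a hyperbolic group (in fact virtually free), and by the Milnor--Svarc lemma the orbit map $\gamma\mapsto\gamma x_0$ is a quasi-isometry from $\Gamma$ with a word metric onto the orbit with the induced $\bbH^2$-metric. In particular $B_n\subseteq\{\gamma\in\Gamma:\abs{\gamma}\le An+B\}$ for suitable constants $A,B$. I would then invoke the Haagerup inequality (property RD) for hyperbolic groups: there is a polynomial $P$ of degree at most $2$ (in fact degree $3/2$ suffices for functions on balls) with $\|\lambda_\Gamma(f)\|\le P(R)\,\|f\|_2$ whenever $f$ is finitely supported and $\supp f$ has word-diameter at most $R$. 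Applied with $R=An+B$, this gives $\|\pi_0(\mu_n)\|=\|\lambda_\Gamma(\mu_n)\|\le P(An+B)\,\|\mu_n\|_2$ for any probability measure $\mu_n$ supported in $B_n$.

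It then remains to make $\|\mu_n\|_2$ exponentially small, which is where orbit counting enters. For a convex cocompact subgroup of $\Isom(\bbH^2)$ one has $\#B_n\asymp e^{\delta_\Gamma n}$, a classical counting statement obtainable from the Patterson--Sullivan shadow lemma together with divergence of the Poincaré series, or from mixing of the geodesic flow. Fix $k$ large enough that $\#B_{n-k}\le\half\#B_n$ for all $n$ (possible because $\#B_{n-k}$ is a fraction $O(e^{-\delta_\Gamma k})$ of $\#B_n$). Then $\#S_n=\#(B_n\setminus B_{n-k})\ge c\,e^{\delta_\Gamma n}$ for some $c>0$ and all $n$. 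Let $\mu_n$ be the uniform probability measure on $S_n$: it is supported in $B_n$, and it is symmetric since $d_{\bbH^2}(x_0,\gamma x_0)=d_{\bbH^2}(x_0,\gamma^{-1}x_0)$ forces $S_n=S_n^{-1}$. Since $\|\mu_n\|_2=(\#S_n)^{-1/2}\le c^{-1/2}e^{-\half\delta_\Gamma n}$, combining with the previous paragraph gives
\[
\|\pi_0(\mu_n)\|=\|\lambda_\Gamma(\mu_n)\|\le P(An+B)\,(\#S_n)^{-1/2}\le e^{-\half\delta_\Gamma n+2\log n+O(1)},
\]
which is the asserted bound.

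The heart of the matter is the Haagerup step: it is precisely the ``square-root cancellation'' that makes these walks spectrally optimal, and without it one has only the useless bound $\|\lambda_\Gamma(\mu_n)\|\le1$. One genuinely needs that convolution by a measure spread uniformly over an exponentially large set of elements of uniformly bounded word length has operator norm only a polynomial factor above its $\ell^2$-norm; moreover one must ensure the relevant polynomial has degree at most $2$ and that passing between the $\bbH^2$-metric balls $B_n$ (in which the statement is phrased) and the word-metric balls (in which property RD is natural) costs only $O(1)$ in the exponent and a polynomial factor in the prefactor. The other two ingredients are comparatively soft, but it should be noted that the reduction in the first step uses convex cocompactness in an essential way, through the absence of parabolics and hence the triviality of the stabilisers $\Gamma_\xi$.
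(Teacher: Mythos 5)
Your proof is correct, but it takes a genuinely different route from the paper, and one point deserves more care. What you share with the paper is the first step: the Fourier decomposition of $L^2_0(\bbT^2)$ into a direct sum of quasi-regular representations $\pi_{\Gamma/\Gamma_\xi}$, giving $\|\pi_0(\mu)\|=\|\lambda_\Gamma(\mu)\|$. The paper (Theorem~\ref{toruskoopman}) deduces this via Kesten's theorem from the \emph{amenability} of the stabilisers $\Gamma_\xi$ (which are subgroups of unipotent groups), an argument that applies to every $\Gamma<\SL_2(\bbZ)$; you use instead the \emph{triviality} of $\Gamma_\xi$, which is correct for convex cocompact $\Gamma$ and gives $\pi_0$ as a multiple of $\lambda_\Gamma$ directly, but does not extend to groups with parabolics — you are right to flag this dependence. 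The second step is where the routes diverge completely. You quote property RD for hyperbolic groups as a black box and combine it with Coornaert-type sharp orbit counting $\#B_n=e^{\delta_\Gamma n+O(1)}$. The paper never invokes RD: it instead passes to the boundary representation $\pi_{\partial\Gamma}$ (Corollary~\ref{koopmanbdry}) and proves the bound directly via Patterson--Sullivan shadows, a stratification of the shell by Busemann levels, and the Gershgorin circle theorem (Theorems~\ref{specradthm} and~\ref{matrixnorm}). This is heavier machinery, but it is self-contained, it generalizes to groups acting properly cocompactly on arbitrary proper quasiruled hyperbolic spaces (Theorem~\ref{mainradius}), and it effectively reproves a quantitative RD-type inequality rather than assuming one.

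The one place in your argument that needs a citation or a proof is the exact degree of the RD polynomial. You claim degree at most $2$ (and degree $3/2$ for balls), and you need degree $\le 2$ for the bound to close up, since the theorem's tolerance is exactly a factor $n^2$ in front of $e^{-\delta_\Gamma n/2}$; a polynomial of degree $>2$ would give a genuinely weaker inequality than the one stated, not absorbable into the $O(1)$. For free groups this is Haagerup's original inequality (degree $1$ on spheres, hence $3/2$ on balls), and since your $\Gamma$ is virtually free and the metric change via Milnor--\v Svarc is affine, the degree survives; but the general hyperbolic-group RD statements in the literature often do not specify the degree, so you should either cite a version with an explicit exponent or restrict explicitly to the virtually free case. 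It is worth knowing that the paper itself discusses the RD route in \S~\ref{spectraloptimality} and presents it as giving only $e^{-(\delta_\Gamma-\epsilon)n/2}$; that loss comes from using only the definition of the critical exponent rather than the sharp counting of Lemma~\ref{sizeofballs}, and your version, which does use sharp counting, correctly removes the $\epsilon$.
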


We can view the spectral estimates we obtained as a quantitative ergodic theorem for the $2-$torus.

\begin{cor}
	Let $\Gamma < \SL_2(\mathbb{Z})$ convex cocompact subgroup, and shells $S_n=B_n\setminus B_{n-k}\subset \Gamma$ as above. 
	Then for any $f\in L^2(\mathbb{T}^2,m)$ we have
	\[
		\left\| \frac{1}{|S_n|} \sum_{g\in S_n} f(g.x) - \int_{\mathbb{T}^2} f dm \right\|_2 
		\leq n^2 e^{-\half\delta_\Gamma\cdot n +O(1) }\cdot \|f\|_2.
	\]
\end{cor}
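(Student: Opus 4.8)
The plan is to decompose $f$ into its mean and mean-zero parts and feed the latter into Theorem~\ref{T:specialspec}. Write $f = c + f_0$, where $c = \int_{\bbT^2} f\, dm$ is the constant function and $f_0 = f - c \in L^2_0(\bbT^2)$, so that $\norm{f_0}_2 \le \norm{f}_2$. The constant function is $\Gamma$-invariant, so $c(g.x) = c$ for every $g \in \Gamma$, whence $\frac{1}{|S_n|}\sum_{g\in S_n} c(g.x) = c = \int_{\bbT^2} f\, dm$ and this term exactly cancels the subtracted integral in the corollary. It therefore remains to bound, in $L^2(\bbT^2,m)$, the norm of the function $x\mapsto \frac{1}{|S_n|}\sum_{g\in S_n} f_0(g.x)$ by $n^2 e^{-\half\delta_\Gamma n + O(1)}\norm{f_0}_2$.

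Next I would recognize this ergodic average as a Markov operator applied to $f_0$. Let $\mu_n$ be the uniform probability measure on the shell $S_n = B_n \setminus B_{n-k}$. Since each $g\in\Gamma$ acts by isometries of $\mathbf{H}^2$ one has $d(g^{-1}.x_0,x_0) = d(g.x_0,x_0)$, so every ball $B_n$ is symmetric and hence so is every shell $S_n$; thus $\mu_n$ is a symmetric probability measure supported in $B_n$, precisely the class of measures to which Theorem~\ref{T:specialspec} applies. Unwinding the definition of the $\Gamma$-representation on $L^2(\bbT^2)$, the function $x \mapsto \frac{1}{|S_n|}\sum_{g\in S_n} f_0(g.x)$ equals $\pi_0(\nu_n) f_0$, where $\nu_n$ is either $\mu_n$ or its reflection $g \mapsto \mu_n(g^{-1})$; by symmetry of $S_n$ these coincide, so in either case this function is exactly $\pi_0(\mu_n) f_0$.

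Finally I would invoke the operator-norm bound. Theorem~\ref{T:specialspec} gives $\norm{\pi_0(\mu_n)} \le e^{-\half\delta_\Gamma n + 2\log n + O(1)}$, and since $e^{2\log n} = n^2$ this yields
\[
\norm{\pi_0(\mu_n) f_0}_2 \le \norm{\pi_0(\mu_n)}\cdot\norm{f_0}_2 \le n^2 e^{-\half\delta_\Gamma n + O(1)}\norm{f}_2 ,
\]
which is the asserted estimate. I do not expect any genuine obstacle beyond Theorem~\ref{T:specialspec} itself: the only points requiring a word of care are that the uniform measure on a shell is symmetric --- so that the theorem applies verbatim, including its ``in fact'' clause about shells --- and the essentially cosmetic bookkeeping with the action convention. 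In short, the corollary is just the restatement of Theorem~\ref{T:specialspec} through the standard dictionary between $L^2$-norms of ergodic averages over $S_n$ and operator norms of the Markov operators $\pi_0(\mu_n)$.
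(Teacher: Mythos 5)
Your proof is correct and follows exactly the argument the paper intends: split $f$ into its mean and mean-zero parts, identify the shell average with $\pi_0(\mu_n)f_0$ (using symmetry of $S_n$ to reconcile the $g.x$ vs.\ $g^{-1}.x$ convention), and apply the operator-norm bound of Theorem~\ref{T:specialspec} with $e^{2\log n}=n^2$. No gaps.
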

The constant $\delta_\Gamma$ in the above rate of convergence, cannot be improved(see \S~\ref{spectraloptimality}).

\subsection{Spectrally optimal random walks}\hfill{}\par
For weakly equivalent unitary $\Gamma$-representations $\pi'\sim\pi''$ one has $\|\pi'(\mu)\|=\|\pi''(\mu)\|$ for any probability measure $\mu$ on $\Gamma$. Hence $\pi_0$ in the above theorem can be replaced by any weakly equivalent unitary representation, and we show (Theorem~\ref{toruskoopman}) that the left regular representation $	\lambda:\Gamma\to U(\ell^2\Gamma)
$ is such. So Theorem~\ref{T:specialspec} is a special case of the following more general result, in which convex cocompact subgroup of $\Isom(\mathbf{H}^2)$ is replaced by a group $\Gamma$ acting properly and cocompactly on a proper quasiruled hyperbolic space $(X,d)$. The notion of quasiruled hyperbolic spaces is defined in \S~\ref{background}. We remark that geodesic Gromov hyperbolic spaces are such.  We have the following general form of Theorem~\ref{T:specialspec}: 
\begin{mthm}\label{mainradius}
	Let $(X,d)$ be a proper quasiruled  hyperbolic space, $\Gamma$ a finitely generated group, acting properly cocompacty by isometries on $(X,d)$.
	Then for some $k$ and all $n$, the uniform distributions $\mu_n$ on the shells $S_n=B_n\setminus B_{n-k}$ satisfy
	\[
		\|\lambda(\mu_n)\|\le  e^{-\half\delta_\Gamma\cdot n + 2\log n + O(1)} 
	\]
	where $\lambda$ is the regular representation on $\ell^2(\Gamma)$.
\end{mthm}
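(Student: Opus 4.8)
The plan is to deduce the estimate from a rapid-decay (Haagerup--Jolissaint type) inequality for $\Gamma$ relative to the length function $\ell(g)=d(g.x_0,x_0)$, together with purely exponential growth of the $\Gamma$-orbit. If $\Gamma$ is elementary there is nothing to prove ($\delta_\Gamma=0$ and $\|\lambda(\mu_n)\|\le 1$), so assume $\Gamma$ is non-elementary. Since $\Gamma$ acts properly and cocompactly by isometries on the proper quasiruled hyperbolic space $(X,d)$, the orbit map $g\mapsto g.x_0$ is a quasi-isometry with cobounded image; by the Coornaert-type purely exponential growth estimate — $\#B_n\asymp e^{\delta_\Gamma n}$, which holds for cobounded nets in quasiruled hyperbolic spaces — there is $C\ge 1$ with $C^{-1}e^{\delta_\Gamma n}\le\#B_n\le Ce^{\delta_\Gamma n}$ for all $n$. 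Fixing $k$ so that $Ce^{-\delta_\Gamma k}\le\tfrac12 C^{-1}$, we obtain $\tfrac12 C^{-1}e^{\delta_\Gamma n}\le\#S_n=\#B_n-\#B_{n-k}\le Ce^{\delta_\Gamma n}$, hence $\|\mu_n\|_{\ell^2(\Gamma)}=(\#S_n)^{-1/2}\le(2C)^{1/2}e^{-\frac12\delta_\Gamma n}$. It therefore suffices to prove the rapid-decay bound
\[
\|\lambda(f)\|\le D\,(1+n)^{2}\,\|f\|_{\ell^2(\Gamma)}\quad\text{for all }n\text{ and all }f\in\bbC[\Gamma]\text{ supported on }B_n ,
\]
since then, applied to $f=\mu_n$, it gives $\|\lambda(\mu_n)\|\le D(1+n)^2(2C)^{1/2}e^{-\frac12\delta_\Gamma n}=e^{-\frac12\delta_\Gamma n+2\log n+O(1)}$.

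The rapid-decay inequality is the heart of the argument. Decompose $f=\sum_{j\ge 0}f_j$, where $f_j$ is supported on the unit shell $A_j=\setdef{g\in\Gamma}{j\le\ell(g)<j+1}$ (a finite set, by properness). It suffices to prove the per-shell bound $\|\lambda(f_j)\|\le C(1+j)\|f_j\|_2$: Cauchy--Schwarz over $j\le n$ then gives $\|\lambda(f)\|\le C\big(\sum_{j\le n}(1+j)^2\big)^{1/2}\|f\|_2=O(n^{3/2})\|f\|_2\le D(1+n)^2\|f\|_2$. To bound $\|\lambda(f_j)\|$, take $\xi\in\ell^2(\Gamma)$ supported on a single shell $A_m$, and for an output radius $p$ let $P_p$ be the orthogonal projection onto $A_p$; Cauchy--Schwarz yields $\|P_p\lambda(f_j)\xi\|_2^2\le N_{j,m,p}\,\|f_j\|_2^2\,\|\xi\|_2^2$, where $N_{j,m,p}=\sup_{g}\#\setdef{(a,b)\in A_j\times A_m}{ab=g,\ \ell(g)\in[p,p+1)}$. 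Here quasiruled hyperbolicity enters: comparing the lengths $\ell(a),\ell(b),\ell(ab)$ with the relevant Gromov products forces $a.x_0$ to lie within a bounded neighbourhood — depending only on the hyperbolicity and quasiruled constants — of the point at parameter $\approx\tfrac12(j+m-p)$ on a quasi-ruler from $x_0$ to $g.x_0$; since $X$ is proper and the action cocompact, such a bounded ball contains only boundedly many orbit points, whence $N_{j,m,p}=O(1)$ uniformly. Summing as in the classical Haagerup argument — the output radius $p$ runs over $O(\min(j,m))$ values, costing a factor $\min(j,m)^{1/2}$, and a further Cauchy--Schwarz over $m$ costs another — produces $\|\lambda(f_j)\|\le C(1+j)\|f_j\|_2$, as required.

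The main obstacle is carrying out the geometric input of the previous paragraph in the quasiruled rather than geodesic setting: one must replace geodesics by quasi-rulers throughout and verify that the position of the intermediate orbit point $a.x_0$ remains pinned down, up to an additive error independent of $j,m,p,g$, by the relation between the three lengths and the thin-triangle property. Once this lemma is established the rest is the standard Haagerup/de la Harpe--Jolissaint bookkeeping. A secondary technical point is to have the two-sided growth estimate $\#B_n\asymp e^{\delta_\Gamma n}$ available in this generality, since the weaker $\#B_n=e^{\delta_\Gamma n+o(n)}$ read off from the definition of $\delta_\Gamma$ would only yield $\|\lambda(\mu_n)\|\le e^{-\frac12\delta_\Gamma n+o(n)}$ rather than the sharp exponent with a merely logarithmic correction.
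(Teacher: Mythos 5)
Your high-level plan — reduce the spectral bound to a rapid-decay inequality plus the two-sided (Coornaert-type) growth estimate $\#S_n\asymp e^{\delta_\Gamma n}$ — is a genuinely different route from the paper. The paper instead passes through the boundary representation $\pi_{\partial\Gamma}$ on $L^2(\partial\Gamma,\rho)$, proves $\|\lambda(\mu)\|=\|\pi_{\partial\Gamma}(\mu)\|$ using amenability of the boundary action together with Shalom's lemma, and then estimates $\|\pi_{\partial\Gamma}(\mu_n)\|$ by Gershgorin on a matrix built from shadow overlaps. The RD route you propose is in fact acknowledged in the paper's \S\ref{spectraloptimality}, where it is pointed out that the known RD property only gives the rate with a lossy $\epsilon$ in the exponent; you correctly observe that this loss disappears once one uses the sharp two-sided growth, and that the resulting bound matches (indeed would slightly beat) the $2\log n$ correction.

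However, there is a genuine gap in your proof of the per-shell rapid-decay estimate, and it is at exactly the point you flag as ``the heart of the argument.'' The claim that
\[
N_{j,m,p}=\sup_{g}\#\setdef{(a,b)\in A_j\times A_m}{ab=g,\ \ell(g)\in[p,p+1)}=O(1)
\]
is false, and the geometric justification offered for it is incorrect. Writing $j=\ell(a),m=\ell(b),p=\ell(ab)$, the Gromov product $(x_0\mid g.x_0)_{a.x_0}=\tfrac12(j+m-p)$ is precisely (up to the hyperbolicity constant) the distance from $a.x_0$ to a quasi-ruler $[x_0,g.x_0]$; it is the length of the tripod leg emanating from the vertex $a.x_0$. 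When $j+m-p$ is large — which happens for all $p$ near $|j-m|$, the generic situation once one sums over $p$ — the point $a.x_0$ is \emph{far} from $[x_0,g.x_0]$, not pinned to a bounded neighbourhood of it, and for a fixed $g$ the number of admissible $a$ grows like $e^{\delta_\Gamma(j+m-p)/2}$ (in a free group, like $3^{(j+m-p)/2}$). So the naive Cauchy--Schwarz $\|P_p\lambda(f_j)\xi\|_2^2\le N_{j,m,p}\|f_j\|_2^2\|\xi\|_2^2$ gives nothing useful. What \emph{is} pinned to a bounded neighbourhood of $[x_0,g.x_0]$ is the internal tripod point (roughly, the prefix $a'$ of $g$ of length $\approx\tfrac12(j+p-m)$), not $a$ itself; Haagerup's, and later Jolissaint's and de~la~Harpe's, arguments get the per-sphere estimate $\|P_p(f*\xi)\|_2\le C\|f\|_2\|\xi\|_2$ not by bounding the number of $(a,b)$-pairs, but by the factorization Cauchy--Schwarz that treats the free parameter (the ``direction'' of the tripod leg through $a$) as the summation variable and sums the squares over it. Your sketch conflates the two.

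The plan can be salvaged without re-proving RD: since $\Gamma$ acts properly cocompactly on the proper quasiruled hyperbolic space $X$, its Cayley graph is quasi-isometric to $X$ and hence word-hyperbolic, so the Jolissaint--de~la~Harpe RD estimate (per-sphere constant $C(1+n)$, hence degree $\tfrac32$ for balls) holds for the word length; since the orbit length $\ell(g)=d(g.x_0,x_0)$ is quasi-isometric to the word length, one gets $\|\lambda(f)\|\le D(1+n)^{3/2}\|f\|_2$ for $f$ supported on the orbit-length ball $B_n$. Combined with $\|\mu_n\|_2=(\#S_n)^{-1/2}\le Ce^{-\frac12\delta_\Gamma n}$, this yields the stated bound (in fact with the sharper correction $\tfrac32\log n$). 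But as written, the geometric lemma underlying the RD step does not hold, and that step must be replaced either by the genuine Haagerup-style argument adapted to quasi-rulers or by an appeal to RD for hyperbolic groups after the quasi-isometry reduction.
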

In fact, in our proof we replace the regular representation $\lambda$ by the quasi-regular representation on the boundary of $\Gamma$ endowed with the Patterson-Sullivan measure, which satisfies the same estimate. 
\begin{remark}
Similar estimates previously appeared in works of Bader and Muchnik in \cite{BaderMuchnik} and Boyer in \cite{Boyer} in their study of the irreducibility of boundary representations. 
\end{remark}

Let us put Theorem~\ref{mainradius} in a broader perspective. 
Let $\Gamma$ be a group with proper left invariant metric $d$, and let us denote by $B_n$ the ball of radius $n$ in $\Gamma$.
Given a unitary $\Gamma$-representation $\pi$ 
define the function $\rho_\pi : \bbN \to \mathbb{R}_+$ by
\[
	\rho_\pi(n) := \min \setdef{\|\pi(\mu)\|}{\supp(\mu) \subset B_n }.
\]
where $\| \cdot \|$ is the operator norm. Since $B_n\cdot B_m\subset B_{n+m}$, and the operator norm is submultiplicative, one has $\rho_\pi(n+m)\le \rho_\pi(n)\cdot \rho_\pi(m)$. Therefore the limit
\[
	\lim_{n\to\infty} \frac{1}{n}\cdot\log \rho_\pi(n)
\]
exists. It might be of interest to investigate $\rho_\pi(n)$ for a given $\Gamma, d, \pi$ as above.

\par 

For a finitely supported probability measure $\mu$ on $\Gamma$ we recall the definitions of the \textbf{drift} and the \textbf{asymptotic entropy}
\[
	\begin{split}
		\ell(\mu)&:=\lim_{n\to\infty} \frac{1}{n}\cdot \sum_{g\in \Gamma} d(g,e)\cdot\mu^{*n}(g),\\
		h(\mu)&:=\lim_{n\to\infty} \frac{1}{n}\cdot \sum_{g\in \Gamma} -\log \mu^{*n}(g)\cdot \mu^{*n}(g).
	\end{split}
\] 
Let $\lambda$ be the left regular representation of $\Gamma$. The following inequalities are well known and hold for any finitely supported symmetric probability measure
\[  
  -2 \log( 	\| \lambda(\mu)\|) \le  h(\mu) \le \delta_\Gamma\cdot \ell(\mu)
\]  
If $\supp(\mu)\subset B_n$, one has the trivial estimate $\ell(\mu)\le n$, that gives the upper bound $\le \delta_\Gamma \cdot n$
for all of the above. 

Theorem~\ref{mainradius} describes a situation that allows to choose symmetric $\mu_n$ supported in $B_n$ so that the above sequence of inequalities is quite tight
\begin{equation*}
 \delta_\Gamma \cdot n - 2\log n +O(1) \le -2\log (\| \lambda(\mu_n)\|) \le h(\mu_n) \le \delta_\Gamma\cdot \ell(\mu_n) \le  \delta_\Gamma \cdot n
\end{equation*}

In conclusion, we should point out that some of the estimates, although not in the sharpest form can be deduced from the Rapid Decay property (RD), which is known for hyperbolic groups. 

\subsection{Organization of the paper}\hfill{}\par

We set the notation and recall the notion of quasiruled hyperbolic spaces in \S~\ref{background}. \S~\ref{representations} is dedicated to unitary representations of discrete groups used in this paper. Theorems~\ref{mainradius} and~\ref{T:specialspec} are proved in \S~\ref{estimate}. In \S~\ref{approximation}, we deduce Theorems~\ref{maintheorem} and~\ref{maintheoremB}. Finally, \S~\ref{BAD} discusses the proof of Theorem~\ref{wellapproximable}. Some additional remarks are given in \S~\ref{spectraloptimality}.

\subsection{Acknowledgements}\hfill{}\par

The author would like to thank Alex Furman for suggesting the problem and numerous useful conversations. 

\section{Background and Notation}\label{background}

We will use Landau's asymptotic notation: $f(x)=O(g(x))$ means that there exists constant $K>0$, so that $|f(x)| \leq Kg(x)$. For a function $h:X \to \bbR$(where $X$ is a general space), we will write $h=O(1)$ meaning that $h$ is a bounded function.

\subsection{Quasi-ruled hyperbolic spaces}\hfill{}\par

Let $(X,d)$ be a metric space. For $x,y,z \in X$ the Gromov product is defined by  
\begin{equation*}
(x|y)_z := \half \left(d(x,z)+d(y,z)-d(x,y)\right)
\end{equation*}
The notion of hyperbolicity is usually studied in the setting of complete geodesic spaces. In this paper we are interested to exploit the hyperbolicity of non-geodesic metric spaces. For our purposes we want a notion for which the boundary theory and the theory of quasiconformal measures still exist. We recall the theory of quasiruled hyperbolic spaces (see appendix of \cite{Blachere-Haissinsky-Mathieu} for more details)

\begin{defn}
Let $X$ be a proper metric space.
\begin{itemize}

\item[(1)] A $(\lambda,c)$-\textit{quasigeodesic curve} (resp. ray, segment) is the image of $\bbR$ (resp. $\bbR_+$, a compact
interval of $\bbR$) by a $(\lambda,c)$-quasi-isometric embedding. 
\item[(2)] A $\tau$-\textit{quasiruler} is a quasigeodesic $g : \bbR \to X$ (resp. quasisegment $g : I \to X$,
quasiray $g : \bbR+ \to X)$ such that, for any $s < t < u$, we have
\[ (g(s)|g(u))_{g(t)} \leq \tau\]
\item[(3)]
We say that $X$ is \textit{quasi-ruled} if there exist constants $\lambda \geq 1$ and $\tau, c \geq 0$ such that any two points in $X$ can be joined by a $(\lambda, c)$-quasigeodesic, and every $(\lambda, c)$-quasigeodesic is a $\tau$-quasiruler.
\item[(4)] A \textit{quasitriangle} is given by three points $x, y, z \in X$ together with three quasirulers(edges) joining them.
\item[(5)] A quasitriangle is $\delta-$\textit{thin} if any of its edges is in the $\delta$-neighborhood of the union of two other edges.
\item[(6)] A quasiruled metric space $X$ is called \textit{hyperbolic} if it satsifies the Rips condition for some $\delta \geq 0$, i.e. every quasitriangle is $\delta-$thin. 

\end{itemize}

\end{defn}

\begin{example} An important example of quasiruled hyperbolic spaces is the class of convex cocompact subgroups of $\Isom(\mathbf{H}^n)$. They act properly cocompactly by isometries on their convex core in $(\mathbf{H}^n,d_{\mathbf{H}^n})$. Fix $x_0\in \mathbf{H}^n$ a basepoint. Define the left invariant metric on $\Gamma$: for $g,h\in \Gamma$, $d(g,h):=d_{\mathbf{H}^n}(g.x_0,h.x_0)$. This metric is quasi-isometric to the word metric on $\Gamma$, and with respect to this metric, $\Gamma$ is itself a proper quasiruled hyperbolic space. 
\end{example}

One of the useful features of thin triangles is that they admit a centroid. More precisely, given three points $x, y, z$, there is a tripod $T$ and an isometric embedding $f : \{x, y, z\} \to T$ such that the images are the endpoints of $T$. We denote by $C_T$ the center of $T$.
\begin{lemma}\label{centroid}(Tripod lemma)(\cite{Blachere-Haissinsky-Mathieu}Lemma A.3) Let $\Delta$ be a $\delta$-thin quasitriangle with vertices $x, y, z$ in a quasiruled hyperbolic space $X$. There is a $(1, c_0)$-quasiisometry $f_\Delta : \Delta \to T$ , where $T$ is the tripod associated with $x, y, z$ and $c_0$ depends only on the data $(\delta, \lambda, c, \tau)$.
\end{lemma}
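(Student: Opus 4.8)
The plan is to follow the classical construction of the comparison tripod of a triangle, tracking the additive errors produced by working with quasirulers rather than genuine geodesics. First I would set up the model. Put $\ell_x:=(y|z)_x$, $\ell_y:=(z|x)_y$, $\ell_z:=(x|y)_z$; these are nonnegative and satisfy $\ell_x+\ell_y=d(x,y)$, $\ell_y+\ell_z=d(y,z)$, $\ell_z+\ell_x=d(z,x)$. Let $T$ be the tripod formed by three segments $[\bar x\,C_T]$, $[\bar y\,C_T]$, $[\bar z\,C_T]$ of lengths $\ell_x,\ell_y,\ell_z$ glued at the common endpoint $C_T$, and set $f_\Delta(x)=\bar x$, $f_\Delta(y)=\bar y$, $f_\Delta(z)=\bar z$. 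By the identities above the three vertices already sit at the correct pairwise tripod distances; the content of the lemma is to extend $f_\Delta$ across the three edges of $\Delta$ preserving distances up to a single additive constant.

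Next I would define $f_\Delta$ one edge at a time, parametrizing each half of an edge by distance to the nearer endpoint rather than by the quasigeodesic parameter. Consider the edge $e$ from $x$ to $y$, the image of a $(\lambda,c)$-quasi-isometric embedding $g$ which is a $\tau$-quasiruler. The quasiruler inequality with base point $x$ shows that $s\mapsto d(x,g(s))$ is coarsely non-decreasing with defect $O(\tau)$ and has $O(\lambda+c)$-bounded increments, so it runs coarsely monotonically from $0$ to $d(x,y)=\ell_x+\ell_y$; choose an internal point $p_{xy}\in e$ with $d(x,p_{xy})=\ell_x+O(\lambda+c+\tau)$, equivalently (quasiruler inequality at $p_{xy}$) $d(y,p_{xy})=\ell_y+O(\lambda+c+\tau)$. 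Send a point $u$ on the $x$-side of $p_{xy}$ to the point of $[\bar x\,C_T]$ at distance $d(x,u)$ from $\bar x$, and a point on the $y$-side to the point of $[\bar y\,C_T]$ at distance $d(y,u)$ from $\bar y$ (clamping at $C_T$ within the $O(\lambda+c+\tau)$ slack). The $\tau$-quasiruler property of $e$ gives, for $u,u'\in e$,
\[
d(u,u')=\bigl|d(x,u)-d(x,u')\bigr|+O(\tau)\ \text{ if $u,u'$ are on the same side of }p_{xy},\qquad d(u,u')=d(u,p_{xy})+d(p_{xy},u')+O(\tau)\ \text{ otherwise,}
\]
so $f_\Delta|_e$ is a coarsely surjective $(1,O(\lambda+c+\tau))$-quasi-isometry onto $[\bar x\,C_T]\cup[\bar y\,C_T]$. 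The same construction on the other two edges produces internal points $p_{yz}\in e_{yz}$ and $p_{zx}\in e_{zx}$.

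The crucial step, and the main obstacle, is to compare two points lying on different edges, which is where $\delta$-thinness enters. I would first establish the sub-claim that $p_{xy},p_{yz},p_{zx}$ lie pairwise within $O(\delta+\lambda+c+\tau)$ of one another, i.e. the quasitriangle has a coarse centroid: $\delta$-thinness places $p_{xy}$ within $\delta$ of some $w\in e_{yz}\cup e_{zx}$, the near-equalities $d(x,p_{xy})=\ell_x+O(\cdot)$ and $d(y,p_{xy})=\ell_y+O(\cdot)$ force $w$ to satisfy the defining estimates for $p_{zx}$ or $p_{yz}$, and the quasiruler property of that edge pins $w$ to within $O(\lambda+c+\tau)$ of it. Granting this, a point $u$ lying well inside one of the two legs of $e_{xy}$, say its $x$-leg, is $\delta$-close to the corresponding $x$-leg of the adjacent edge $e_{zx}$ (it cannot be close to the far edge $e_{yz}$, whose distance from $x$ is $\approx\ell_x$); for $v\in e_{yz}$ one then computes $d(u,v)$ by moving $u$ across by $\delta$, using the single-edge estimates on $e_{zx}$ and $e_{yz}$ together with the coarse-centroid sub-claim, and checks that it agrees with $d_T(f_\Delta u,f_\Delta v)$ — which is either a difference or a sum of the distances of $f_\Delta u,f_\Delta v$ to $C_T$, according to whether their images lie on a common leg — up to $O(\delta+\lambda+c+\tau)$. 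Running through the finitely many (mildly fiddly) cases for the positions of $u$ and $v$ relative to their internal points, including the boundary case where one of them is already within $O(\delta)$ of an internal point, all give the same order of error. Collecting every constant that appeared into a single $c_0=c_0(\delta,\lambda,c,\tau)$ finishes the proof; the only subtle bookkeeping is the coarse monotonicity of $s\mapsto d(x,g(s))$ used to place $p_{xy}$ and the verification that the distance-reparametrized half-edges are genuine $(1,\mathrm{const})$-quasi-isometries onto the legs, both immediate from the quasigeodesic and $\tau$-quasiruler inequalities.
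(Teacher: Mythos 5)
The paper does not prove this lemma; it cites it from Blach\`ere--Ha\"issinsky--Mathieu (Lemma A.3) as a black box, so there is no in-paper proof to compare your argument against. Evaluated on its own, your argument is the standard one and, as far as I can tell, sound: the Gromov products $\ell_x,\ell_y,\ell_z$ determine the tripod, the $\tau$-quasiruler inequality makes $s\mapsto d(x,g(s))$ coarsely monotone with $O(\lambda+c)$ increments so that a well-defined internal point $p_{xy}$ with $d(x,p_{xy})=\ell_x+O(\lambda+c+\tau)$ exists, the single-edge estimates follow directly from the quasiruler inequality, and $\delta$-thinness together with the coarse injectivity of $d(x,\cdot)$ along each quasiruler gives the coarse-centroid sub-claim and the cross-edge comparisons. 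Two small points you gloss over and should spell out if this were written for real: (i) the coarse-centroid sub-claim as stated only places $p_{xy}$ near one of the other two internal points; you need a second application of thinness (to $p_{yz}$, say) plus transitivity to get all three pairwise close; (ii) your remark that a point $u$ deep in the $x$-leg of $e_{xy}$ cannot be $\delta$-close to $e_{yz}$ rests on the lower bound $d(x,v)\ge\ell_x-\tau$ for $v\in e_{yz}$, which follows from the quasiruler inequality on $e_{yz}$ together with two triangle inequalities, and is worth stating explicitly since it is what drives the case analysis.
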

We call $f_\Delta^{-1} (C_T)$ a centroid of $\Delta$. Of course, the map $f_\Delta$, and thus the centroid are not unique, but there exists a constant $c_1$ depending on the space only, such that for every quasitriangle $\Delta\subset X$, every 2 centroids of $\Delta$ are at most at distance $c_1$.

\subsection{Visual boundary and Patterson-Sullivan measures}\label{psintro}\hfill{}\par
Geodesic hyperbolic spaces admit a visual boundary and conformal densities on it. In a similar fashion, proper quasiruled hyperbolic metric spaces admit a natural boundary, called the $\textit{visual boundary}$ associated to $(X,d,x_0)$ 
\begin{equation*}
\partial X := \setdef { (x_i)_{i=1}^\infty} { x_i \in X, \lim_{i,j \to \infty} (x_i|x_j)_{x_0} =\infty } /\sim
\end{equation*}
where
\begin{equation*}
(x_i)\sim (y_i) \Leftrightarrow (x_i|y_i)_{x_0} \underset{i\to \infty}{\longrightarrow} \infty
\end{equation*}
The visual boundary is the set of equivalence classes of infinite quasiruler rays, where two rays are equivalent if they are at bounded Hausdorff distance from each other. The boundary $\partial X$ doesn't depend on the choice of basepoint $x_0$.
\par
Similarly to geodesic hyperbolic spaces, in a quasiruled hyperbolic space there exists a quasiruled curve between any two points in the boundary.
\par 
The boundary $\partial X$ may be equipped with the topology, whose basis is given by shadows. 
For $y \in X$ and $C\geq 0$, the \textit{shadow} $O_C(x_0,y)$ is 
\begin{equation*}
O_C(x_0,y) := \setdef{ [(z_i)] \in \partial X}{ \liminf_{j \to \infty } (z_j|y)_{x_0} \geq d(x_0,y) - C }
\end{equation*}
Alternatively, a point $\xi \in \partial X$ belongs to the shadow $O_C(x_0,y)$ if some quasiruler ray from $x_0$ to $\xi$ intersects the closed $C$-ball around $y$. 

\par
Sometimes we would like to think of shadows as subsets of $X$, in this case
\begin{equation*}
\bar{O}_C(x_0,y) := \left\{z\in X | (y|z)_{x_0} \geq d(x_0,y) - C \right\}
\end{equation*}
\medskip
For $z\in X$, the \textit{Busemann function at} $z$, $\beta_z : X \times X  \to \mathbb{R}$ is
\begin{equation*}
\beta_z(x,y):= d(z,x) - d(z,y)
\end{equation*}
For $\xi \in \partial X$, we define Busemann function at $\xi$ by 
\begin{equation*}
\beta_\xi(x,y):= \sup_{z_{t}\to \xi} \limsup_{t\to \infty} \left\{ d(z(t),x) - d(z(t),y) \right\}
\end{equation*}
The above $\sup$ should be taken along all possible quasiruler rays $z(t)$ from $y$ to $\xi$.
\medskip

Recall that for $\Gamma<Isom(X,d)$, with a chosen basepoint $x_0\in X$, the \textit{critical exponent} for $\Gamma$ is given by
\[
\delta_\Gamma := \limsup_{R \to \infty} \frac{ \log \# \setdef{ g\in\Gamma}{ d(g.x_0,x_0) \leq R }}{R}
\]

The $\Gamma$ action on $X$ induces natural action on $\partial X$ and on the space of Busemann functions. 
\begin{equation*}
g.\beta_\xi(x,y) := \beta_{g.\xi}(x,y) = \beta_\xi(g^{-1}.x,g^{-1}.y)
\end{equation*}

The next theorem summarizes the main properties of quasiconformal measures on the boundary of $X$. It was proved by Coornaert in \cite{Coornaert} for geodesic hyperbolic spaces, and by Blachere-Haissinsky-Mathieu in \cite{Blachere-Haissinsky-Mathieu} for proper quasiruled hyperbolic spaces.
\begin{theorem}(\cite{Blachere-Haissinsky-Mathieu}, Theorem 2.3)
Let $\Gamma$ be a finitely generated group acting properly cocompactly by isometries on a pointed proper  quasiruled hyperbolic space $(X,d,x_0)$. For any small enough $\epsilon>0$  

\begin{itemize}
\item[(1)] There exists a visual metric $d_\epsilon$ on the boundary $\partial X$, its Haussdorff dimension is given by $ \dim_H(\partial X,d_\epsilon) = \delta_\Gamma/\epsilon $ 
\item[(2)] There exists a $\Gamma-$equivariant family $\{\rho_x\}_{x \in X}$ of Radon probability measures on $\partial X$, i.e. for any $g\in \Gamma, x\in X$ we have $g_* \rho_x = \rho_{g.x}$. Moreover, the entire family $\rho_x$ is in the same measure class.
\item[(3)] The distortion of a measure by the $\Gamma$ action is measured by the Busemann functions, namely for any $\xi \in \partial X$
\begin{equation*}
\frac{d\rho_y}{d\rho_x}(\xi) = e^{-\delta_\Gamma\beta_\xi(y,x)+O(1)}
\end{equation*}
\item[(4)] $\rho_x$ are Ahlfors-regular of dimension $\delta_\Gamma/\epsilon$, i.e. for any $\xi \in \partial X $, for any $r \in(0, diam_{\epsilon} (\partial X))$, we have
 \[\rho_x(B_{d_\epsilon}(a, r)) =  r^{\delta_\Gamma/\epsilon +O(1)}  \]
\item[(5)] $\Gamma$ action on $(\partial X, \rho_x)$ is ergodic for any $x\in X$
\end{itemize}
\end{theorem}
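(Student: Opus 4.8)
The plan is to run the classical Patterson--Sullivan construction; the geodesic case is due to Coornaert (after Sullivan), and the point of \cite{Blachere-Haissinsky-Mathieu} is to carry it through in the quasiruled setting, where one must keep track of the additive $O(1)$ errors forced by non-geodesicity. First I would construct the measures. Consider the Poincar\'e series $P_\Gamma(s)=\sum_{g\in\Gamma}e^{-s\,d(x_0,g.x_0)}$, which by definition of $\delta_\Gamma$ converges for $s>\delta_\Gamma$ and diverges for $s<\delta_\Gamma$; by Patterson's device one inserts a slowly growing weight $h(d(x_0,g.x_0))$ to arrange divergence also at $s=\delta_\Gamma$. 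For $x\in X$ put $\mu_{x,s}=P_\Gamma(s)^{-1}\sum_g h(d(x,g.x_0))\,e^{-s\,d(x,g.x_0)}\,\delta_{g.x_0}$, a probability measure on the compactification $X\cup\partial X$, and let $\rho_x$ be a weak-$*$ limit along a sequence $s_n\downarrow\delta_\Gamma$ fixed once and for all (say chosen first over a countable dense set of basepoints). Properness of the action --- finiteness of the balls $B_n$ --- together with divergence of the series at $\delta_\Gamma$ forces $\rho_x$ to be supported on $\partial X$ and to be non-atomic.

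The technical core is the conformality relation (3). For $\xi\in\partial X$, the ratio of the weights defining $\mu_{y,s}$ and $\mu_{x,s}$ at a point $g.x_0$ is $e^{-s(d(y,g.x_0)-d(x,g.x_0))}$ times a factor tending to $1$, and as $g.x_0\to\xi$ the exponent tends to $-\beta_\xi(y,x)$. In a geodesic space this limit is exact and $\beta_\xi$ is a genuine cocycle; in a quasiruled space the tripod lemma (Lemma~\ref{centroid}) controls the relevant Gromov products only up to a constant depending on $(\delta,\lambda,c,\tau)$, which is why one only gets $\frac{d\rho_y}{d\rho_x}(\xi)=e^{-\delta_\Gamma\beta_\xi(y,x)+O(1)}$; in particular all $\rho_x$ lie in one measure class. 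Equivariance $g_*\rho_x=\rho_{g.x}$ is immediate, since left translation by $g$ merely re-indexes the defining sum. Granting (3), Sullivan's shadow lemma follows in the form $\rho_{x_0}(\bar{O}_C(x_0,y))=e^{-\delta_\Gamma d(x_0,y)+O(1)}$, uniformly for $y\in\Gamma.x_0$ and $C$ large: the upper bound from (3) and $\rho_x(\partial X)=1$, the lower bound from cocompactness (every point of $X$ is a bounded distance from $\Gamma.x_0$, so shadows of uniformly bounded depth have $\rho_{x_0}$-mass bounded below, by a compactness/pigeonhole argument).

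Parts (4) and (1) are then consequences of the shadow lemma. One first builds the visual metric $d_\epsilon$ from the quasi-metric $e^{-\epsilon(\xi|\eta)_{x_0}}$ by the Frink chain argument, valid once $\epsilon$ is small relative to $\delta$ (using the quasiruled Gromov-product estimates from the appendix of \cite{Blachere-Haissinsky-Mathieu}). A visual ball $B_{d_\epsilon}(\xi,r)$ is squeezed between two shadows $\bar{O}_C(x_0,y)$ with $d(x_0,y)\asymp-\epsilon^{-1}\log r$, so the shadow lemma gives $\rho_x(B_{d_\epsilon}(\xi,r))=r^{\delta_\Gamma/\epsilon+O(1)}$, i.e.\ Ahlfors regularity of dimension $\delta_\Gamma/\epsilon$; the mass distribution principle then yields $\dim_H(\partial X,d_\epsilon)=\delta_\Gamma/\epsilon$. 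For ergodicity (5) I would use the Hopf--Tsuji--Sullivan dichotomy: divergence of the Poincar\'e series at $\delta_\Gamma$ makes the diagonal $\Gamma$-action on $(\partial X\times\partial X,\rho_x\otimes\rho_x)$ conservative, hence ergodic by a Hopf-type argument, which projects to ergodicity of $\Gamma\acts(\partial X,\rho_x)$; alternatively, a direct Lebesgue-density argument --- using the shadow lemma to estimate the $\rho_x$-density of a $\Gamma$-invariant set on nested shadows and then equivariance to slide the shadow anywhere --- shows every invariant measurable set has measure $0$ or $1$.

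The main obstacle, and the reason this goes beyond the classical geodesic theorem, is that in a quasiruled space Busemann functions are merely quasi-cocycles and the Gromov product satisfies the hyperbolic inequalities only up to additive constants. Preventing these $O(1)$ errors from accumulating --- in the shadow lemma, in the comparison of visual balls with shadows, and in the conformality relation --- is precisely what the tripod lemma and the quasiruler estimates are for, and is where essentially all of the work lies.
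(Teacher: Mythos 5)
The paper does not prove this result; it is imported verbatim as Theorem 2.3 of \cite{Blachere-Haissinsky-Mathieu}, so there is no in-paper argument to compare against. Your sketch is, however, a faithful account of the Patterson--Sullivan/Coornaert construction that \cite{Blachere-Haissinsky-Mathieu} in fact carry out in the quasiruled setting: Poincar\'e series and weak-$*$ limits for existence and equivariance, the tripod lemma to control Busemann quasi-cocycles and obtain the $O(1)$ form of quasi-conformality, the shadow lemma from conformality plus cocompactness, the Frink chain construction for $d_\epsilon$ and squeezing visual balls between shadows to get Ahlfors regularity and the Hausdorff-dimension formula, and Hopf--Tsuji--Sullivan (or a shadow-based density argument) for ergodicity. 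Two small remarks: since the action is properly discontinuous and cocompact, $\Gamma$ is automatically of divergence type, so Patterson's weight $h$ is not strictly necessary here (though harmless); and the lower bound in the shadow lemma deserves a word more than ``pigeonhole'' --- one uses minimality of the limit-set action (a consequence of cocompactness) to get a uniform positive lower bound on $\rho_x$-mass of shadows of bounded depth before translating by $\Gamma$. Neither affects the correctness of the outline.
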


This class of measures is called the \textit{Patterson-Sullivan measure class}. It does not depend on the choice of $\epsilon$. Denote $\rho:= \rho_{x_0}$.
\par 
In fact, the metric $d_\epsilon$ is given in the following way. First one extends the Gromov product to the boundary by defining
\begin{equation*}
([x_i]|[y_i])_{x_0}:= \limsup_{i\to \infty}(x_i|y_i)_{x_0}
\end{equation*}
where  $\limsup$ is taken over all quasiruled rays in the equivaence classes.
There exists $\epsilon_0>0$, such that for any $0<\epsilon<\epsilon_0$ there exists a metric on $\partial X$ satisfying
\begin{equation*}
d_\epsilon(\xi,\eta) := O(1)e^{-\epsilon(\xi|\eta)_0}
\end{equation*}
Such metric $d_\epsilon$ induces the boundary topology described above. Moreover, the shadows are related to the balls in metric $d_\epsilon$.
\begin{prop}\label{diamofshadows}(\cite{Blachere-Haissinsky-Mathieu}, Proposition 2.1)
There exists $C_0\geq 0$, such that for any $C \geq C_0$ and any $x\in X$
\begin{equation*}
\text{diam}_\epsilon (O_C(x_0,x)) =  e^{-\epsilon d(x,x_0) + O(1) }
\end{equation*}
\end{prop}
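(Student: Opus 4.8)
The plan is to relate the diameter of a shadow $O_C(x_0,x)$ in the visual metric $d_\epsilon$ to the quantity $e^{-\epsilon d(x,x_0)}$ by bounding the Gromov product $(\xi|\eta)_{x_0}$ from below for any two boundary points $\xi,\eta$ lying in the shadow. Recall $d_\epsilon(\xi,\eta) = O(1)\, e^{-\epsilon(\xi|\eta)_{x_0}}$, so it suffices to show that for all $\xi,\eta\in O_C(x_0,x)$ one has $(\xi|\eta)_{x_0} \geq d(x,x_0) - O(1)$, which gives the upper bound $\mathrm{diam}_\epsilon(O_C(x_0,x)) \le e^{-\epsilon d(x,x_0)+O(1)}$; and conversely, to produce two specific points in the shadow whose Gromov product is at most $d(x,x_0)+O(1)$, giving the matching lower bound.

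For the upper bound, I would argue as follows. If $\xi\in O_C(x_0,x)$, then by definition there is a quasiruler ray $(z_i)$ representing $\xi$ with $\liminf_i (z_i|x)_{x_0} \geq d(x_0,x) - C$; geometrically, a quasiruler ray from $x_0$ to $\xi$ passes within distance $C$ of $x$. Given two such rays for $\xi$ and $\eta$, both passing near $x$, I want to estimate $(\xi|\eta)_{x_0} = \limsup_i (z_i|w_i)_{x_0}$ from below. The key tool is the quasiruler/thin-triangle inequality in quasiruled hyperbolic spaces: the Gromov product $(\cdot|\cdot)_{x_0}$ is, up to an additive constant depending only on $(\delta,\lambda,c,\tau)$, an ultrametric-type quantity, so $(\xi|\eta)_{x_0} \gtrsim \min\{(\xi|x)_{x_0}, (x|\eta)_{x_0}\} - O(1) \geq d(x_0,x) - C - O(1)$. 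Here $(\xi|x)_{x_0}$ means $\liminf$ along the ray, and the point is that if both rays pass within $C$ of $x$, then they must fellow-travel with the segment $[x_0,x]$ up to additive error, hence with each other, for a time comparable to $d(x_0,x)$. This requires invoking the Tripod Lemma~\ref{centroid} (or directly the $\tau$-quasiruler property and $\delta$-thinness) to control the centroid of the quasitriangle with vertices $x_0$, $x$, and a far-out point on each ray.

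For the lower bound, the natural candidates are the endpoints of two quasiruler rays from $x_0$ that both pass through (or near) $x$ but then diverge maximally — for instance, extending a quasigeodesic segment $[x_0,x]$ in two different directions past $x$, using properness and the fact (noted in \S\ref{psintro}) that any boundary point is joined to $x_0$ by a quasiruler; alternatively one can simply take $\xi=\eta$ to see the diameter is at most the claimed bound and argue that $O_C(x_0,x)$ has large enough "width at $x$" because the space is nonelementary (which follows from cocompactness when $\delta_\Gamma>0$; if $\partial X$ is a single point the statement is vacuous after adjusting constants). Concretely, any two rays from $x_0$ through the $C$-ball at $x$ that separate immediately afterwards have Gromov product at $x_0$ at most $d(x_0,x)+C+O(1)$, giving $d_\epsilon(\xi,\eta) \geq e^{-\epsilon d(x,x_0) - O(1)}$.

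The main obstacle I anticipate is the bookkeeping of additive constants in the \emph{non-geodesic} setting: in a genuinely geodesic hyperbolic space this is the standard fact that shadows have exponentially decaying diameter, but here one must everywhere replace geodesics by $(\lambda,c)$-quasigeodesics that are $\tau$-quasirulers, and check that the $\liminf$/$\limsup$ in the definitions of shadows, Busemann functions, and the extended Gromov product all interact correctly — in particular that the constant $C_0$ can be chosen uniformly. This is exactly the content packaged in the appendix of \cite{Blachere-Haissinsky-Mathieu}, so in practice I would reduce to their estimates on quasirulers and the comparison between the quasi-ultrametric inequality for $(\cdot|\cdot)_{x_0}$ on $\overline{X}$ and the visual metric $d_\epsilon$, rather than re-deriving the quasiruler geometry from scratch.
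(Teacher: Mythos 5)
The paper itself does not prove Proposition~\ref{diamofshadows}; it cites it directly from the appendix of \cite{Blachere-Haissinsky-Mathieu}. So your proposal is being compared against the source result rather than an in-paper argument. Your outline of the \emph{upper} bound is the standard one and is sound: for $\xi,\eta\in O_C(x_0,x)$, quasiruler rays from $x_0$ to each pass within $C$ of $x$, and the quasi-ultrametric inequality $(\xi|\eta)_{x_0}\geq \min\{(\xi|x)_{x_0},(\eta|x)_{x_0}\}-O(1)$ for quasiruled $\delta$-thin triangles gives $(\xi|\eta)_{x_0}\geq d(x_0,x)-C-O(1)$, hence $d_\epsilon(\xi,\eta)\leq e^{-\epsilon d(x_0,x)+O(1)}$. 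The additive-constant bookkeeping is exactly what the BHM appendix supplies, as you say.

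The \emph{lower} bound, however, is where your sketch has a genuine gap. Your first suggestion, extending $[x_0,x]$ ``in two different directions past $x$,'' does not by itself produce two boundary points with Gromov product $\leq d(x_0,x)+O(1)$: in an arbitrary proper quasiruled hyperbolic space there is no guarantee that a segment can be extended to a ray in more than one essentially distinct direction, and even if it can, you would still need to check that both endpoints land in the shadow and that they do not continue to fellow-travel. Your parenthetical remark that ``one can simply take $\xi=\eta$ to see the diameter is at most the claimed bound'' is not correct — two equal points give visual distance $0$, which says nothing about the diameter from below (or above). The mechanism that actually makes the lower bound work, and that fixes the constant $C_0$ uniformly in $x$, is the cocompactness of the $\Gamma$-action together with $\partial X$ being infinite: fix once and for all a pair $\xi_0,\eta_0\in\partial X$ with $(\xi_0|\eta_0)_{x_0}\leq K$; given $x$, use cocompactness to choose $g\in\Gamma$ with $d(g.x_0,x)\leq D$, and set $\xi=g.\xi_0$, $\eta=g.\eta_0$. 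Thin triangles then show that for $C_0\geq K+D+O(1)$ both $\xi,\eta$ lie in $O_{C_0}(x_0,x)$, while $(\xi|\eta)_{x_0}\leq d(x_0,g.x_0)+K+O(1)\leq d(x_0,x)+D+K+O(1)$, giving $d_\epsilon(\xi,\eta)\geq e^{-\epsilon d(x_0,x)-O(1)}$. You gesture at nonelementarity and cocompactness, so you seem aware that some such input is needed, but the concrete construction and the reason $C_0$ can be chosen uniformly are missing; the ``extend past $x$'' picture should be replaced by the translate-a-fixed-wide-pair argument.
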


Combining the fact that Patterson-Sullivan measures are Ahlfors regular with respect to this metric and the description of shadows we can conlude the following corollary known as the lemma of the shadow, 
\begin{cor}\label{rhoofshadows}(Lemma of the shadow, \cite{Blachere-Haissinsky-Mathieu}, Lemma 2.4)
There exists $C\geq 0$, such that for any $x \in X$  
\begin{equation*}
\rho(O_C(x_0,x)) = e^{-\delta_\Gamma d(x,x_0) + O(1)}
\end{equation*}
\end{cor}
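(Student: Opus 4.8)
The plan is to deduce the estimate $\rho(O_C(x_0,x)) = e^{-\delta_\Gamma d(x,x_0) + O(1)}$ by combining Proposition~\ref{diamofshadows} with the Ahlfors-regularity of the Patterson--Sullivan measure $\rho$ stated in part (4) of the quasiconformal measures theorem. First I would recall that, by Proposition~\ref{diamofshadows}, once $C \geq C_0$ the shadow $O_C(x_0,x)$, viewed as a subset of $(\partial X, d_\epsilon)$, has diameter $e^{-\epsilon d(x,x_0) + O(1)}$. The idea is that a set of a given diameter is sandwiched between two metric balls of comparable radius: fixing any point $\xi_0 \in O_C(x_0,x)$ (the shadow is nonempty for $C$ large, since some quasiruler ray from $x_0$ through the $C$-ball at $x$ extends to the boundary), we have the inclusion $O_C(x_0,x) \subseteq B_{d_\epsilon}(\xi_0, \operatorname{diam}_\epsilon O_C(x_0,x))$, which gives the upper bound $\rho(O_C(x_0,x)) \leq \rho(B_{d_\epsilon}(\xi_0, r))$ with $r = e^{-\epsilon d(x,x_0) + O(1)}$.

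For the matching lower bound I would use a standard feature of shadows in hyperbolic spaces: enlarging the shadow parameter from $C$ to some larger $C'$, the shadow $O_{C'}(x_0,x)$ still has diameter $e^{-\epsilon d(x,x_0)+O(1)}$ by the same proposition, but now $O_{C'}(x_0,x)$ contains a $d_\epsilon$-ball $B_{d_\epsilon}(\xi_0, c\, e^{-\epsilon d(x,x_0)})$ for a uniform constant $c>0$ — this is the usual "shadows contain balls of comparable size" statement, which follows from the interplay between the definition of the shadow via the Gromov product $(y|z)_{x_0}$ and the estimate $d_\epsilon(\xi,\eta) \asymp e^{-\epsilon(\xi|\eta)_{x_0}}$. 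Since $O_C(x_0,x) \subseteq O_{C'}(x_0,x)$ is false in general but the reverse behaviour is not what we want; instead I would just prove the lower bound for $O_C$ directly by showing $O_C(x_0,x) \supseteq B_{d_\epsilon}(\xi_0, c\,e^{-\epsilon d(x,x_0)})$ for suitable $\xi_0$ and $c$ depending only on $C$ and the space. Then Ahlfors regularity, part (4), applied to both the enclosing and the enclosed ball yields
\[
\rho(O_C(x_0,x)) = \left(e^{-\epsilon d(x,x_0) + O(1)}\right)^{\delta_\Gamma/\epsilon + O(1)} = e^{-\delta_\Gamma d(x,x_0) + O(1)},
\]
where I should be slightly careful that the $O(1)$ error in the exponent $\delta_\Gamma/\epsilon + O(1)$ of the Ahlfors regularity statement interacts with $d(x,x_0)$; but in fact the cleanest route is to use the form "$\rho(B_{d_\epsilon}(\xi,r)) \asymp r^{\delta_\Gamma/\epsilon}$ up to a multiplicative constant" — i.e. reading the $O(1)$ in part (4) as an additive error in $\log_r$ that is bounded, equivalently a bounded multiplicative constant — so that substituting $r = e^{-\epsilon d(x,x_0)+O(1)}$ gives $\rho = e^{-\delta_\Gamma d(x,x_0) + O(1)}$ with a genuinely bounded error.

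The main obstacle is the lower bound, i.e. verifying that a shadow $O_C(x_0,x)$ genuinely contains a $d_\epsilon$-ball of radius comparable to its diameter, uniformly in $x$. In the geodesic setting this is classical (Coornaert), but here one must work with quasiruler rays: given $\xi$ with $d_\epsilon(\xi, \xi_0)$ small, one needs to produce a quasiruler ray from $x_0$ to $\xi$ that passes within distance $C$ of $x$, using that $\xi_0 \in O_C(x_0,x)$ already, $\delta$-thinness of quasitriangles, and the Tripod Lemma~\ref{centroid} to locate where the ray to $\xi$ and the ray to $\xi_0$ fellow-travel. This is a routine but slightly technical quasi-geodesic stability argument; since the statement is quoted from \cite{Blachere-Haissinsky-Mathieu} (Lemma 2.4), I would in practice simply cite it, and the "proof" here amounts to recording the two ingredients — Proposition~\ref{diamofshadows} and Ahlfors regularity — and noting that the classical shadow lemma argument goes through verbatim in the quasiruled setting because all the needed geometric facts (existence of quasiruler rays between boundary points, thinness, centroids) are available.
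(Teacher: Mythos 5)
Your approach matches the paper's, which offers no proof beyond the one-sentence remark preceding the corollary (``Combining the fact that Patterson-Sullivan measures are Ahlfors regular\dots and the description of shadows'') and the citation to \cite{Blachere-Haissinsky-Mathieu}, Lemma~2.4; your outline is a reasonable elaboration of that same combination. Two small notes. First, your worry about the $O(1)$ in the Ahlfors-regularity exponent is genuine: read literally, $\rho(B_{d_\epsilon}(\xi,r)) = r^{\delta_\Gamma/\epsilon + O(1)}$ with $r = e^{-\epsilon d(x,x_0)+O(1)}$ would produce an error of size $O(d(x,x_0))$ rather than $O(1)$, and the correct reading (and the one intended in \cite{Blachere-Haissinsky-Mathieu}) is the multiplicative one $\rho(B_{d_\epsilon}(\xi,r)) = e^{O(1)}\, r^{\delta_\Gamma/\epsilon}$, as you conclude. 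Second, your aside that ``$O_C(x_0,x) \subseteq O_{C'}(x_0,x)$ is false in general'' for $C<C'$ is backwards: enlarging $C$ weakens the defining condition $\liminf_j (z_j|y)_{x_0} \ge d(x_0,y)-C$, so shadows are nested increasingly in $C$. Your conclusion to prove ball-containment directly for $O_C$ is fine regardless --- and since the corollary only asserts existence of some $C$, you could equally well establish ball-containment for the enlarged $O_{C'}$ and take $C'$ as the constant in the statement.
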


The $\Gamma$ action on $(X,d)$ induces the left invariant metric $d_0:=d(g.x_0,h.x_0)$. If the action is proper and cocompact, $(\Gamma, d_0)$ is itself a proper quasiruled hyperbolic space. We denote by $B_n$ the $n-$ball in $\Gamma$ with respect to $d_0$ and define the $k-$shell: 
\[
S_{n,k} := B_n \setminus B_{n-k}
\]
The shadows of the shells $S_{n,k}$ cover the boundary with finitely many overlaps (with the bound uniform in $n$). More precisely, 

\begin{lemma} \label{coveringshadows}(\cite{Coornaert}, Lemma 6.5)
There exist $C,k \geq 0$ such that for any $n \in \mathbb{N}$ 
\[ \underset{g\in S_{n,k}}{\bigcup} O_C(e,g) \supseteq \partial \Gamma \] 
Moreover, there exists $L$(depending only on $C$ and $k$) such that for any $n$ and any $\xi \in \partial G$ 
\begin{equation*}
\# \setdef { g \in S_{n,k} }{ \xi \in O_C(e,g) }  \leq L
\end{equation*}
i.e. every $\xi \in \partial \Gamma$ is covered by at most $L$ shadows of elements in the shell $S_{n,k}$
\end{lemma}

We also have precise asymptotics of the growth of balls and shells

\begin{lemma}\label{sizeofballs}(\cite{Coornaert}, Theorem 7.2) 
There exists $k>0$, such that 
\begin{itemize}
\item[(1)] 
$ \# S_{n,k} = e^{\delta_\Gamma n +O(1)} $

\item[(2)]	
$
\# B_n= e^{\delta_\Gamma n+O(1) }
$	
\end{itemize}
\end{lemma}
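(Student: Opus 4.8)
The plan is to derive both estimates, following Coornaert, from just two of the results recalled above: the Lemma of the shadow (Corollary~\ref{rhoofshadows}) and the covering property of shells (Lemma~\ref{coveringshadows}). Since $(\Gamma,d_0)$ is itself a proper quasiruled hyperbolic space, I would apply both of these in $(\Gamma,d_0,e)$, writing $\rho$ for the Patterson--Sullivan measure on $\partial\Gamma$. Fix once and for all a constant $C$ large enough that Corollary~\ref{rhoofshadows} holds for that $C$ and that Lemma~\ref{coveringshadows} holds with constants $C,k,L$ (each is a "for $C$ large enough" statement, so their common threshold works), and take $k$ as in those lemmas. I will assume $\delta_\Gamma>0$; otherwise $\Gamma$ is elementary — a hyperbolic group of subexponential growth is virtually cyclic — and the statement is only of interest, and is to be interpreted, in the non-elementary case. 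The one uniform input I would record first: for $g\in S_{n,k}$ one has $n-k<d_0(g,e)\le n$ with $k$ fixed, so Corollary~\ref{rhoofshadows} gives
\[
\rho(O_C(e,g)) = e^{-\delta_\Gamma d_0(g,e)+O(1)} = e^{-\delta_\Gamma n + O(1)},
\]
uniformly in $g\in S_{n,k}$ and in $n$.

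For (1), the lower bound uses that the shadows $\{O_C(e,g)\}_{g\in S_{n,k}}$ cover $\partial\Gamma$: since $\rho$ is a probability measure,
\[
1 = \rho(\partial\Gamma)\ \le\ \sum_{g\in S_{n,k}}\rho(O_C(e,g))\ =\ \# S_{n,k}\cdot e^{-\delta_\Gamma n + O(1)},
\]
hence $\# S_{n,k}\ge e^{\delta_\Gamma n - O(1)}$. For the upper bound I would use the bounded-multiplicity clause of Lemma~\ref{coveringshadows}: integrating the pointwise bound $\#\setdef{g\in S_{n,k}}{\xi\in O_C(e,g)}\le L$ against $\rho$,
\[
\sum_{g\in S_{n,k}}\rho(O_C(e,g)) = \int_{\partial\Gamma}\#\setdef{g\in S_{n,k}}{\xi\in O_C(e,g)}\,d\rho(\xi)\ \le\ L,
\]
and since each summand is $\ge e^{-\delta_\Gamma n - O(1)}$ (because $d_0(g,e)\le n$) this gives $\# S_{n,k}\le e^{\delta_\Gamma n + O(1)}$, proving (1).

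For (2), the lower bound is immediate from $S_{n,k}\subseteq B_n$ and part (1). For the upper bound, write $n=qk+r$ with $0\le r<k$; the $k$-shells telescope, $B_n\setminus B_r = \bigsqcup_{i=0}^{q-1} S_{n-ik,k}$, so using (1),
\[
\# B_n = \# B_r + \sum_{i=0}^{q-1}\# S_{n-ik,k}\ \le\ \# B_{k-1} + \sum_{i\ge 0} e^{\delta_\Gamma(n-ik)+O(1)}.
\]
As $\delta_\Gamma>0$, the series $\sum_{i\ge 0}e^{-\delta_\Gamma i k}$ converges and $\# B_{k-1}$ is a fixed constant, so $\# B_n \le e^{\delta_\Gamma n + O(1)}$, which together with the lower bound gives (2). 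There is no real obstacle here beyond bookkeeping: the whole proof is an exchange of summation and integration against the Ahlfors-regular measure $\rho$, and the only points demanding care are fixing a single $C$ compatible with both cited lemmas, tracking that the additive $O(1)$ constants stay uniform in $n$ (they do, because $k$ is fixed), and excluding the degenerate case $\delta_\Gamma=0$ where the final geometric sum diverges.
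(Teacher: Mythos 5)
Your argument is correct and is essentially the standard shadow-lemma proof: the paper does not reproduce a proof of this lemma but cites Coornaert (Theorem 7.2), and your derivation—lower bound on $\#S_{n,k}$ from the covering by shadows, upper bound from the bounded multiplicity integrated against $\rho$, then $\#B_n$ by telescoping the $k$-shells and summing the geometric series—is precisely the scheme used there. The one caveat you flag, that the geometric-series step and indeed the statement itself need $\delta_\Gamma>0$ (i.e.\ $\Gamma$ non-elementary), is a genuine hypothesis in Coornaert that the paper leaves implicit, and your bookkeeping on choosing a single $C$ compatible with Corollary~\ref{rhoofshadows} and Lemma~\ref{coveringshadows} matches the paper's own Definition~\ref{ogdef}.
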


Two above lemmas are stated for geodesic hyperbolic spaces in \cite{Coornaert}, but the same proofs will work for quasiruled hyperbolic spaces. 
\begin{defn}\label{defshell}
Fix $k>0$ for which Lemmas~\ref{coveringshadows} and~\ref{sizeofballs} hold. Denote the \textit{shell} $S_n:=S_{n,k}$.
\end{defn}

\begin{defn}\label{ogdef}
Let $\Gamma$ as above. Let $C \geq 0$ be large enough to satisfy Corollary~\ref{rhoofshadows} and Lemma~\ref{coveringshadows}. 
For $g\in \Gamma$, the $g-$\textit{shadow in } $\Gamma$ is a subset of $\partial \Gamma$ given by
\begin{equation*}
O(g):= O_C(e,g)
\end{equation*}
\end{defn}

\section{Some Unitary Representations of $\Gamma$}\label{representations}

A discrete group $\Gamma$ acts on itself by left multiplication which induces the \textit{left regular representation} $\lambda_\Gamma: \Gamma \to U(l^2(\Gamma))$ given by:
\[
\lambda_\Gamma (g) f(h) = f(g^{-1}h) \quad  \text{for } f\in l^2(\Gamma), g\in \Gamma
\]
If $\Gamma$ acts by measure preserving transformations on a probability space $(X, m)$ we can associate with the action the \textit{Koopman representation} $\pi: \Gamma \to U(L^2(X,m))$, which is given by
\[
\pi(g) f (x) = f(g^{-1}.x)\quad  \text{for } f\in L^2(X), g\in \Gamma
\]
The constant functions are invariant, hence we denote by $\pi_0$ the restriction of $\pi$ to the orthogonal complement of the constant functions $L_0^2(X,m)=\setdef{ f\in L^2(X,m)}{ \int_X f dm=0 }$. \par
If, however, the action only preserves the measure class, we can modify the Koopman representaion to become a unitary representation $\pi_X: G \to U(L^2(X, \nu))$:
\[
\pi_X(g) f(x) = f(g^{-1}.x) \sqrt{\frac{dg_* \nu}{d\nu}(x)}
\]
Such $\pi_X$ is called the \textit{quasi-regular representation}. 
\par 
For example, if $\Gamma$ is as in \S~\ref{psintro}, $\Gamma$ acts on its visual boundary equipped with Patterson Sullivan measure. We call the associated quasi-regular representation the \textit{boundary representation} and denote it by $\pi_{\partial \Gamma}$.

Given finitely supported probability measure $\mu$ on $\Gamma$ and a unitary representation $\sigma:\Gamma \to U(H)$ we can average the representation to get a Markov operator $\sigma(\mu):H \to H$ by
\[
\sigma(\mu) = \sum_{g\in \Gamma} \mu(g) \sigma(g)
\]
\begin{example} 
$\lambda_\Gamma(\mu)$ is the Markov operator associated with the random walk on $\Gamma$ with law $\mu$. It is known that $\| \lambda_\Gamma(\mu)\| <1$ if and only if $\Gamma$ is amenable.
\end{example}
\begin{example}
	Let $H<\Gamma$ a subgroup. $\Gamma$ acts on $\Gamma/H$ by left multiplication, which induces the representation $\pi_{\Gamma/H} : \Gamma \to U(l^2(\Gamma/H))$. 
\end{example}

\begin{theorem}(Kesten, \cite{Kesten})
Let  $\mu$ be a uniform measure on some generating set $S$ of $\Gamma$. If $H$ is amenable, then
\[ \| \lambda_{\Gamma}(\mu) \| = \|  \pi_{\Gamma/H}(\mu)\|  \]
\end{theorem}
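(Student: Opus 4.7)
The plan is to express both operator norms as exponential growth rates of return probabilities of the $\mu$-random walk, then compare the two probabilities directly. The generating set will be assumed symmetric (as is standard for Kesten's theorem), so that $\mu$ is symmetric and both $\lambda_\Gamma(\mu)$ and $\pi_{\Gamma/H}(\mu)$ are self-adjoint contractions.

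First I would observe that $\delta_e \in \ell^2(\Gamma)$ is cyclic for $\lambda_\Gamma$ since $\lambda_\Gamma(g)\delta_e = \delta_g$, and similarly $\delta_H \in \ell^2(\Gamma/H)$ is cyclic for $\pi_{\Gamma/H}$ since $\pi_{\Gamma/H}(g)\delta_H = \delta_{gH}$. Because each cyclic vector's scalar spectral measure has full support on the spectrum of the associated self-adjoint operator, the spectral theorem yields
\[
\|\lambda_\Gamma(\mu)\| = \lim_{n\to\infty} \mu^{*2n}(e)^{1/2n}, \qquad \|\pi_{\Gamma/H}(\mu)\| = \lim_{n\to\infty} \mu^{*2n}(H)^{1/2n},
\]
where $\mu^{*2n}(H) := \sum_{h\in H}\mu^{*2n}(h)$. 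The inequality $\|\pi_{\Gamma/H}(\mu)\| \geq \|\lambda_\Gamma(\mu)\|$ is then immediate from the trivial pointwise bound $\mu^{*2n}(H) \geq \mu^{*2n}(e)$.

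The content lies in the reverse inequality, which is where amenability of $H$ enters. I would pick a F{\o}lner sequence $(F_k)_{k\geq 1}$ in $H$ and form the unit vectors $\eta_k := |F_k|^{-1/2}\mathbf{1}_{F_k} \in \ell^2(\Gamma)$. A direct expansion gives
\[
\langle \lambda_\Gamma(\mu)^{2n}\, \eta_k, \eta_k \rangle = \frac{1}{|F_k|}\sum_{g \in \Gamma} \mu^{*2n}(g)\cdot |F_k \cap gF_k|.
\]
Since $F_k \subset H$, the summand vanishes whenever $g \notin H$, while for $g \in H$ the F{\o}lner property forces $|F_k \cap gF_k|/|F_k| \to 1$ as $k \to \infty$. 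Because $\mu$ is finitely supported, $\supp(\mu^{*2n})$ is finite, so for each fixed $n$ and each $\varepsilon > 0$ one can take $k$ large enough that the F{\o}lner approximation holds uniformly on the finite set $H \cap \supp(\mu^{*2n})$, yielding
\[
\|\lambda_\Gamma(\mu)\|^{2n} \geq \langle \lambda_\Gamma(\mu)^{2n}\, \eta_k, \eta_k \rangle \geq (1-\varepsilon)\,\mu^{*2n}(H).
\]
Letting $\varepsilon \to 0$, taking $2n$-th roots, and sending $n \to \infty$ then gives $\|\lambda_\Gamma(\mu)\| \geq \|\pi_{\Gamma/H}(\mu)\|$.

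The hard part is the F{\o}lner step, and the only subtlety is the ordering of limits: the required size of $F_k$ depends on $n$ through $\supp(\mu^{*2n})$, so $k$ must be chosen as a function of $n$ and $\varepsilon$ before letting $\varepsilon \to 0$ and $n \to \infty$. Apart from this bookkeeping, the whole argument is the standard device of importing almost-invariant vectors from an amenable subgroup into $\ell^2(\Gamma)$.
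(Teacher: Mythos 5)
The paper does not prove this statement; it is cited as Kesten's theorem from \cite{Kesten}, so there is no internal proof to compare against. Your argument is correct and is the standard Følner-set proof of Kesten's criterion. A few small remarks.

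You rightly supply the hypothesis that $S$ is symmetric and finite, which the paper leaves implicit; without symmetry the spectral-radius-via-return-probabilities identity you use would not apply directly, and without finiteness of $S$ the step ``$\supp(\mu^{*2n})$ is finite'' (needed to pick a single Følner set working uniformly over the relevant group elements) would fail. Your easy direction ($\|\pi_{\Gamma/H}(\mu)\| \geq \|\lambda_\Gamma(\mu)\|$) in fact requires no amenability at all, only $e \in H$; this is worth noting because it shows the content is entirely in the reverse inequality. The bookkeeping point you flag at the end---that $k$ must be chosen depending on $n$ and $\varepsilon$ before the limits are taken---is exactly right and is where a careless write-up would go wrong.

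For comparison, the alternative route that fits the paper's representation-theoretic viewpoint is: amenability of $H$ gives weak containment $\mathbf{1}_H \prec \lambda_H$, hence $\pi_{\Gamma/H} = \operatorname{Ind}_H^\Gamma \mathbf{1}_H \prec \operatorname{Ind}_H^\Gamma \lambda_H \cong \lambda_\Gamma$, giving $\|\pi_{\Gamma/H}(\mu)\| \leq \|\lambda_\Gamma(\mu)\|$; the opposite inequality is a special case of Kuhn's theorem (Theorem~\ref{thmrep}) or Shalom's Lemma~\ref{shalomrep}, both already quoted in the paper. That route is shorter if one is willing to import continuity of induction under weak containment, whereas your Følner argument is self-contained and elementary, which is a reasonable trade-off.
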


A generalized version of this is the following:
\begin{theorem} \label{thmrep}(Kuhn, \cite{Kuhn})
Let $\Gamma$ be a discrete group, $\mu \in Prob(\Gamma)$, and let $\Gamma$ act ergodically preserving the measure class on a probability space $(X,\nu)$. Assume the action is amenable in the sense of Zimmer, and let $\pi_X$ the corresponding quasi-regular representation. Then,
\begin{equation*}
\|\lambda_\Gamma(\mu)\| \geq \|\pi_X(\mu) \| 
\end{equation*} 
\end{theorem}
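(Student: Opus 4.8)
The plan is to realize $\pi_X$ as a sub-representation of a representation weakly contained in $\lambda_\Gamma$, since $\|\sigma(\mu)\|$ is monotone under weak containment (for $\sigma'$ weakly contained in $\sigma$ one has $\|\sigma'(\mu)\|\le\|\sigma(\mu)\|$, and weak equivalence gives equality). The key input is Zimmer-amenability of the action $\Gamma\acts(X,\nu)$: by the measure-theoretic analogue of the Hulanicki–Reiter characterization of amenability, an amenable action admits an equivariant \emph{conditional expectation}, or equivalently a net of unit vectors in $L^2(X\times\Gamma)$-type spaces exhibiting the relevant Fell-topology approximation. Concretely, I would use the fact that for an amenable action the Koopman-type representation on $L^2(X,\nu)$ is weakly contained in the representation $\lambda_\Gamma\otimes(\text{something})$, and then cut down.

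First I would set up the representation $\rho$ of $\Gamma$ on $L^2(X\times\Gamma,\nu\times c)$ (with $c$ counting measure) given by $\rho(g)F(x,h)=F(g^{-1}x,g^{-1}h)\sqrt{\tfrac{dg_*\nu}{d\nu}(x)}$. This $\rho$ is unitarily equivalent to $\pi_X\otimes\lambda_\Gamma$ after a change of variables $(x,h)\mapsto(x,h)$ intertwining the two diagonal actions — the standard trick is the unitary $F(x,h)\mapsto F(x,h)$ composed with $h\mapsto h$ in one factor, showing $\pi_X\otimes\lambda_\Gamma\cong(\dim)\cdot\lambda_\Gamma$, i.e. $\pi_X\otimes\lambda_\Gamma$ is a multiple of $\lambda_\Gamma$ (Fell's absorption principle). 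Hence $\|(\pi_X\otimes\lambda_\Gamma)(\mu)\|=\|\lambda_\Gamma(\mu)\|$. Next, Zimmer-amenability of the action supplies a net of almost-invariant sections that lets one weakly contain $\pi_X$ in $\pi_X\otimes\lambda_\Gamma$: amenability gives a $\Gamma$-equivariant mean / conditional expectation $L^\infty(X\times\Gamma)\to L^\infty(X)$, which dualizes to a net of unit vectors $\eta_i\in L^2(X\times\Gamma)$ with $\langle\rho(g)\eta_i,\eta_i\rangle\to 1$ uniformly on compacta, and tensoring these against a fixed $\xi\in L^2(X,\nu)$ shows the diagonal matrix coefficient $g\mapsto\langle\pi_X(g)\xi,\xi\rangle$ is a limit of matrix coefficients of $\pi_X\otimes\lambda_\Gamma$. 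Therefore $\pi_X\prec\pi_X\otimes\lambda_\Gamma$, giving $\|\pi_X(\mu)\|\le\|(\pi_X\otimes\lambda_\Gamma)(\mu)\|=\|\lambda_\Gamma(\mu)\|$.

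The main obstacle is making precise the step "Zimmer-amenability $\Rightarrow$ $\pi_X\prec\pi_X\otimes\lambda_\Gamma$": one must translate the existence of an equivariant conditional expectation (the definition of amenability in Zimmer's sense, via a fixed-point property for affine actions on bundles of convex sets) into a Fell-topology statement about unitary representations. The clean way is to invoke the known equivalence between amenability of the action and the existence, for every $\varepsilon>0$ and finite $K\subset\Gamma$, of a measurable field $x\mapsto f_x\in\ell^2(\Gamma)$ of unit vectors with $\int_X\|\lambda(g)f_{g^{-1}x}-f_x\|^2\,d\nu(x)<\varepsilon$ for $g\in K$ — this is exactly an almost-invariant vector for $\rho$, and pairing against $\xi$ finishes it. Ergodicity is not strictly needed for the inequality but ensures $\pi_X$ has no finite-dimensional pieces causing degenerate behavior; I would remark that it can be dropped. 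I expect the tensor/absorption computation and the Fell-continuity of $\sigma\mapsto\|\sigma(\mu)\|$ to be routine, with all the content concentrated in citing or reproving the amenable-action characterization.
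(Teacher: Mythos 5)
The paper does not prove this statement itself; it quotes it as a black-box theorem of Kuhn (\cite{Kuhn}) and moves on. Your proposal is essentially a faithful reconstruction of Kuhn's original argument: Fell absorption gives $\pi_X\otimes\lambda_\Gamma\cong(\dim)\,\lambda_\Gamma$, Zimmer-amenability (in its Reiter-type reformulation via a net of measurable fields of approximately invariant unit vectors in $\ell^2(\Gamma)$) gives $\pi_X\prec\pi_X\otimes\lambda_\Gamma$, and Fell-continuity of $\sigma\mapsto\|\sigma(\mu)\|$ finishes it; your side remark that ergodicity is superfluous for the inequality is also correct. So there is nothing in the paper to compare against beyond the citation, but your outline matches the standard (and the cited) proof and would compile into a correct argument once the "almost-invariant field" characterization of amenability is stated precisely and the pairing computation is written out.
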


This lemma by Shalom gives a useful condition for an opposite inequality
\begin{lemma}(\cite{Shalom}, Lemma 2.3)\label{shalomrep}
Let $\pi$ be a unitary $\Gamma$-representation, with a positive $\Gamma$-vector, that is nonzero vector $v\in \mathcal{H}$, such that $\left< \pi(g)v,v \right> \geq 0$ for all $g\in \Gamma$. Then for any finitely supported probability measure $\mu$ on $\Gamma$
\begin{equation*}
\|\lambda_\Gamma(\mu)\| \leq \|\pi_X(\mu) \| 
\end{equation*} 
\end{lemma}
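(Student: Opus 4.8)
Let $\pi$ be a unitary $\Gamma$-representation admitting a positive $\Gamma$-vector $v$ (nonzero, with $\langle \pi(g)v,v\rangle \geq 0$ for all $g$). Then $\|\lambda_\Gamma(\mu)\| \leq \|\pi(\mu)\|$ for every finitely supported probability measure $\mu$ on $\Gamma$.

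The plan is to reduce to a symmetric measure and then compare return probabilities of the random walk driven by $\nu\defq\tilde\mu*\mu$ (where $\tilde\mu(g)\defq\mu(g^{-1})$) against the diagonal matrix coefficient $g\mapsto\langle\pi(g)v,v\rangle$ of $\pi$. The measure $\nu$ is symmetric and finitely supported; since $\lambda_\Gamma(\mu)^*=\lambda_\Gamma(\tilde\mu)$ and $\pi(\mu)^*=\pi(\tilde\mu)$, one has $\|\lambda_\Gamma(\mu)\|^2=\|\lambda_\Gamma(\nu)\|$ and $\|\pi(\mu)\|^2=\|\pi(\nu)\|$, with $\lambda_\Gamma(\nu)=\lambda_\Gamma(\mu)^*\lambda_\Gamma(\mu)$ and $\pi(\nu)=\pi(\mu)^*\pi(\mu)$ positive self-adjoint. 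Thus it suffices to prove $\|\lambda_\Gamma(\nu)\|\le\|\pi(\nu)\|$.

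The easy input comes from $\pi$: since $\pi(\nu)^m=\pi(\nu^{*m})$ and $\langle\pi(g)v,v\rangle\ge0$ for all $g$,
\[
	\|\pi(\nu)\|^m\,\|v\|^2\ \ge\ \langle\pi(\nu^{*m})v,v\rangle\ =\ \sum_{g\in\Gamma}\nu^{*m}(g)\,\langle\pi(g)v,v\rangle\ \ge\ \nu^{*m}(e)\,\|v\|^2,
\]
the last step keeping only the $g=e$ term, which is legitimate because $\nu^{*m}(g)\ge0$. As $v\ne0$, this gives $\nu^{*m}(e)\le\|\pi(\nu)\|^m$ for every $m$.

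For $\lambda_\Gamma(\nu)$, which is positive, fix a finitely supported unit vector $f=\sum_g c_g\delta_g$ with $\langle\lambda_\Gamma(\nu)f,f\rangle>\|\lambda_\Gamma(\nu)\|-\epsilon$. Jensen's inequality applied to the probability spectral measure of $\lambda_\Gamma(\nu)$ at $f$, which is supported on $[0,\infty)$, gives $\langle\lambda_\Gamma(\nu)^{2n}f,f\rangle\ge\langle\lambda_\Gamma(\nu)f,f\rangle^{2n}$. On the other hand, writing the left side as $\sum_{g,h}c_g\overline{c_h}\,\nu^{*2n}(g^{-1}h)$ and using the pointwise bound $\nu^{*2n}(x)\le\nu^{*2n}(e)$ — which follows from the identity $\nu^{*2n}(x)=\langle\lambda_\Gamma(\nu)^n\delta_e,\lambda_\Gamma(\nu)^n\delta_x\rangle$, Cauchy–Schwarz, and the translation invariance $\|\lambda_\Gamma(\nu)^n\delta_x\|=\|\lambda_\Gamma(\nu)^n\delta_e\|$ — one gets $\langle\lambda_\Gamma(\nu)^{2n}f,f\rangle\le(\sum_g|c_g|)^2\,\nu^{*2n}(e)$. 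Combining with the previous paragraph,
\[
	\big(\|\lambda_\Gamma(\nu)\|-\epsilon\big)^{2n}\ \le\ \Big(\sum_g|c_g|\Big)^2\nu^{*2n}(e)\ \le\ \Big(\sum_g|c_g|\Big)^2\|\pi(\nu)\|^{2n};
\]
taking $2n$-th roots, letting $n\to\infty$, and then $\epsilon\to0$ yields $\|\lambda_\Gamma(\nu)\|\le\|\pi(\nu)\|$, hence $\|\lambda_\Gamma(\mu)\|\le\|\pi(\mu)\|$.

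The main obstacle is the step extracting the operator norm of $\lambda_\Gamma(\nu)$ from return probabilities, i.e. the inequality $\|\lambda_\Gamma(\nu)\|\le\limsup_n\nu^{*2n}(e)^{1/2n}$ that is hidden in the last display; this is essentially Kesten's theorem that the norm of the Markov operator equals the spectral radius of the walk, and if one prefers not to reprove it the cited literature can be invoked. Its reverse, $\nu^{*2n}(e)^{1/2n}\le\|\lambda_\Gamma(\nu)\|$, is immediate from $\|\lambda_\Gamma(\nu)\|^{2n}=\|\lambda_\Gamma(\nu)^{2n}\|\ge\langle\lambda_\Gamma(\nu)^{2n}\delta_e,\delta_e\rangle=\nu^{*2n}(e)$, so the content is precisely that a nearly optimal finitely supported $f$ — and ultimately the vector $\delta_e$ — already detects the full norm of $\lambda_\Gamma(\nu)$, which is where the special structure of the regular representation (concretely, the domination $\nu^{*2n}(x)\le\nu^{*2n}(e)$) is used. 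The remaining manipulations are formal properties of convolution operators and the spectral theorem for positive operators.
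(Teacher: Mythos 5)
The paper itself does not prove this lemma; it is quoted as Shalom's Lemma 2.3, so there is no in-text proof to compare against. Your argument is correct and is precisely the standard Shalom/Kesten argument: symmetrize to $\nu=\tilde\mu*\mu$ (so $\|\lambda_\Gamma(\nu)\|=\|\lambda_\Gamma(\mu)\|^2$, $\|\pi(\nu)\|=\|\pi(\mu)\|^2$), use the pointwise nonnegativity of $g\mapsto\langle\pi(g)v,v\rangle$ together with $\nu^{*m}\ge 0$ to obtain $\nu^{*m}(e)\le\|\pi(\nu)\|^m$, and then identify $\|\lambda_\Gamma(\nu)\|$ with $\lim_n\nu^{*2n}(e)^{1/2n}$. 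Two minor remarks. First, your closing hedge about needing to cite Kesten is unnecessary: the Jensen plus Cauchy--Schwarz computation you give with a finitely supported unit vector $f$ already constitutes a complete proof of the inequality $\|\lambda_\Gamma(\nu)\|\le\limsup_n\nu^{*2n}(e)^{1/2n}$ (the reverse being trivial), so the argument is self-contained and nothing needs to be imported from the literature. Second, a cosmetic index slip: with the paper's convention $\lambda_\Gamma(g)f(h)=f(g^{-1}h)$, so $\lambda_\Gamma(g)\delta_a=\delta_{ga}$, one finds $\langle\lambda_\Gamma(\nu)^{2n}f,f\rangle=\sum_{g,h}c_g\overline{c_h}\,\nu^{*2n}(hg^{-1})$ rather than $\nu^{*2n}(g^{-1}h)$; this is harmless, since the only input you use is $0\le\nu^{*2n}(x)\le\nu^{*2n}(e)$ for every $x\in\Gamma$.
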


\begin{example}\label{exampleamenaction} 
An example of an amenable action is the action of convex cocompact subgroup $ \Gamma<\SL_2(\bbR)$ on its Poisson boundary (which can be identified with the visual boundary $\partial \Gamma$) equipped with Patterson Sullivan measure(\cite{Zimmer}). Moreover, $\pi_{\partial \Gamma}$ has a positive $\Gamma$-vector(e.g. a constant function), thus we can deduce that for any probability measure $\mu$ on $\Gamma$ we have 
\[ \|\lambda_\Gamma(\mu)\| = \|\pi_{\partial \Gamma}(\mu)\|  \] 

\end{example}

The following theorem is folklore. It relates the left regular representation and the Koopman representation on the two torus.
\begin{theorem}\label{toruskoopman}
Let $\Gamma<SL_2(\mathbb{Z})$ act on the torus $\mathbb{T}^2$ equipped with Lebesgue measure $m$, $\pi_0$ be the Koopman representation on $L^2_0(\mathbb{T}^2)$. Then, for any probability measure on $\Gamma$
 \begin{equation*}
  \| \pi_0(\mu) \| = \|\lambda_\Gamma(\mu) \| 
 \end{equation*}
\end{theorem}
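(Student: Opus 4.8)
The plan is to decompose the Koopman representation $\pi_0$ by Fourier analysis into a Hilbert direct sum of quasi-regular representations on homogeneous $\Gamma$-sets $\Gamma/H$ with $H$ amenable, and then to identify the operator norm of each averaged operator with $\|\lambda_\Gamma(\mu)\|$ using Theorem~\ref{thmrep} (Kuhn) and Lemma~\ref{shalomrep} (Shalom).

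First I would set up the Fourier picture. The characters $e_n(x)=e^{2\pi i\langle n,x\rangle}$, $n\in\bbZ^2$, form an orthonormal basis of $L^2(\bbT^2)$, and the identity $\langle n,g^{-1}x\rangle=\langle (g^{-1})^{\mathsf T}n,x\rangle$ shows $\pi(g)e_n=e_{(g^{-1})^{\mathsf T}n}$. Thus, on this basis, $\pi$ is the permutation representation of the $\Gamma$-action $n\mapsto (g^{-1})^{\mathsf T}n$ on $\bbZ^2$. Discarding the $\Gamma$-fixed character $e_0$ gives a $\Gamma$-invariant orthogonal decomposition $L^2_0(\bbT^2)=\bigoplus_{\mathcal O}\ell^2(\mathcal O)$ over the $\Gamma$-orbits $\mathcal O\subset\bbZ^2\setminus\{0\}$. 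Fixing $n_{\mathcal O}\in\mathcal O$ with stabilizer $H_{\mathcal O}\le\Gamma$, the bijection $gH_{\mathcal O}\leftrightarrow g\cdot n_{\mathcal O}$ identifies $\ell^2(\mathcal O)$ $\Gamma$-equivariantly with $\ell^2(\Gamma/H_{\mathcal O})$, and — putting any fully supported probability measure on the countable set $\Gamma/H_{\mathcal O}$ — this is precisely the quasi-regular representation $\pi_X$ of the transitive, measure-class-preserving action $\Gamma\curvearrowright\Gamma/H_{\mathcal O}$. Since the norm of a direct sum of operators is the supremum of the norms, $\|\pi_0(\mu)\|=\sup_{\mathcal O}\|\pi_{\Gamma/H_{\mathcal O}}(\mu)\|$.

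Next I would note that every $H_{\mathcal O}$ is amenable: the stabilizer of $n_{\mathcal O}$ under the action above is contained in the $\SL_2(\bbZ)$-stabilizer of a rational line, which is virtually cyclic, so $H_{\mathcal O}$ is amenable. For such an $H=H_{\mathcal O}$ the action $\Gamma\curvearrowright\Gamma/H$ is transitive, hence ergodic with respect to any quasi-invariant measure, and amenable in the sense of Zimmer exactly because $H$ is an amenable group; Theorem~\ref{thmrep} then gives $\|\pi_{\Gamma/H}(\mu)\|\le\|\lambda_\Gamma(\mu)\|$. For the reverse inequality, $\delta_{eH}\in\ell^2(\Gamma/H)$ is a positive $\Gamma$-vector, since $\langle\pi_{\Gamma/H}(g)\delta_{eH},\delta_{eH}\rangle=\mathbf 1_H(g)\ge 0$, so Lemma~\ref{shalomrep} gives $\|\lambda_\Gamma(\mu)\|\le\|\pi_{\Gamma/H}(\mu)\|$. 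Hence each summand contributes exactly the norm $\|\lambda_\Gamma(\mu)\|$, and since the set of orbits is nonempty this yields $\|\pi_0(\mu)\|=\|\lambda_\Gamma(\mu)\|$ — first for finitely supported $\mu$, then for an arbitrary probability measure $\mu$ by approximating $\mu$ in total variation by its finitely supported truncations, which forces the averaged operators to converge in operator norm.

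The combinatorics here is routine, so the only points that genuinely need care are the inputs to the last step: that the quasi-regular representation of the transitive action on $\Gamma/H$ is exactly $\pi_{\Gamma/H}$, that this action is Zimmer-amenable precisely when $H$ is amenable, and that $\delta_{eH}$ is a positive vector — together with the reduction to finitely supported $\mu$. One could bypass Theorem~\ref{thmrep} for the inequality $\|\pi_{\Gamma/H}(\mu)\|\le\|\lambda_\Gamma(\mu)\|$ by observing instead that $\pi_{\Gamma/H}=\mathrm{Ind}_H^\Gamma\mathbf 1_H$ is weakly contained in $\mathrm{Ind}_H^\Gamma\lambda_H=\lambda_\Gamma$ (from $\mathbf 1_H\prec\lambda_H$ for amenable $H$ and continuity of induction); but the opposite inequality really does require the positive-vector argument, as the case $H=\Gamma$ already illustrates.
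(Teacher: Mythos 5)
Your proposal is correct and follows essentially the same route as the paper: Fourier-transform $L^2_0(\bbT^2)$ to $\ell^2(\bbZ^2\setminus 0)$, decompose into $\Gamma$-orbits to write $\pi_0$ as a direct sum of the representations $\pi_{\Gamma/H}$ with each stabilizer $H$ amenable, and conclude that each summand has the same Markov-operator norm as $\lambda_\Gamma$. The only cosmetic difference is that the paper invokes Kesten's theorem directly, while you use the Kuhn/Shalom pair (Theorem~\ref{thmrep} and Lemma~\ref{shalomrep}) together with a total-variation approximation to cover arbitrary probability measures; this is arguably a bit more careful, since the form of Kesten's theorem quoted in the paper is stated only for uniform measures on a generating set.
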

\begin{proof}
Recall that the Fourier transform is an isometry between
\[
\widehat{ } \quad : L_0^2(\mathbb{T}^2) \to \ell^2(\mathbb{Z}^2 \setminus 0)
\]
defined as following: for $f\in L_0^2(\mathbb{T}^2)$
\[  
\widehat{f}(\vec{n}) = \int_{\mathbb{T}^2} f(x) e^{2\pi i \left< \vec{n}, x \right>}dm(x)
\]
$\Gamma$ acts on $\mathbb{Z}^2 \setminus 0$ via left multiplication by transpose matrix. This induces a representation $\widehat{\pi_0}$ on $\ell^2(\mathbb{Z}^2 \setminus 0)$ given by
\[
\widehat{\pi_0}(g) \widehat{f}(\vec{n}) = \widehat{f}\left( g^{T} \vec{n}\right)
\]
The following  diagram commutes
\[
\begin{tikzcd}
L^2_0(\mathbb{T}^2) \arrow{r}{\widehat{} } \arrow[swap]{d}{\pi_0(g)} & L^2_0(\mathbb{Z}^2\setminus 0) \arrow{d}{\widehat{\pi_0(g)}} \\
L^2_0(\mathbb{T}^2) \arrow{r}{\widehat{} }  & L^2_0(\mathbb{Z}^2\setminus 0)
\end{tikzcd}
\]
The Fourier transform intertwines the representations. Hence, $\|\pi_0(\mu)\|=\| \widehat{\pi_0(\mu)}\|$.

Pick representatives from each $\Gamma-$orbit of $\widehat{\pi_0}$: $D=\{v_1,v_2,v_3,...\}$. Then, 
\[\mathbb{Z}^2 \setminus 0 \cong \underset{i}{\bigcup} \Gamma/Stab(v_i)\]
and 
\[
\widehat{\pi_0} = \underset{i}{\bigoplus} \pi_{\Gamma/Stab(v_i)}
\]
hence, $\|\widehat{\pi_0}(\mu)\| =\underset{i}{\sup} \| \pi_{\Gamma/Stab_{v_i}}(\mu)\|$.
The stabilizers of vectors in $\mathbb{Z}^2\setminus 0$ are amenable (conjugate to the group of upper triangular matrices), thus by Kesten's theorem we have $\| \pi_{\Gamma/Stab(v_i)}(\mu)\| = \| \lambda_{\Gamma}(\mu) \|$ for every $i$, and hence $\| \pi_0(\mu) \|= \| \widehat{\pi_0}(\mu) \|=  \| \lambda_\Gamma(\mu) \|$ \qedhere
\end{proof}
Combining results from this section we have
\begin{cor}\label{koopmanbdry}
Let $\Gamma<SL_2(\mathbb{Z})$  convex cocompact. Let  $\lambda$ be the left regular representation, $\pi_{\partial \Gamma}$ the boundary representation as described in \S~\ref{psintro} and $\pi_0$ the Koopman representation on the torus. Let $\mu \in Prob(\Gamma)$ finitely supported measure, such that the support generates the entire group, then
\begin{equation*}
\| \pi_{\partial \Gamma}(\mu) \| = \| \pi_0(\mu) \| = \| \lambda(\mu) \|
\end{equation*}
\end{cor}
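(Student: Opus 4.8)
The plan is to obtain the corollary by concatenating the two norm equalities already available in this section; essentially no new argument is needed beyond one observation about convex cocompactness. The statement to be proved is a chain $\|\pi_{\partial\Gamma}(\mu)\| = \|\pi_0(\mu)\| = \|\lambda(\mu)\|$, and I would prove the two equalities separately.

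For $\|\pi_0(\mu)\| = \|\lambda(\mu)\|$ I would simply quote Theorem~\ref{toruskoopman}, which is stated for an arbitrary subgroup of $\SL_2(\bbZ)$ and an arbitrary probability measure, and whose proof (Fourier transform of $L^2_0(\bbT^2)$ onto $\ell^2(\bbZ^2\setminus 0)$; decomposition of $\widehat{\pi_0}$ into the quasi-regular representations $\pi_{\Gamma/\Stab(v_i)}$ over orbit representatives; amenability of the stabilizers, which are conjugate into the upper triangular subgroup; Kesten's theorem) uses no hyperbolic geometry at all. So this half is immediate. For $\|\pi_{\partial\Gamma}(\mu)\| = \|\lambda(\mu)\|$ the key point is that a subgroup $\Gamma<\SL_2(\bbZ)$ acting convex cocompactly on $\mathbf{H}^2$ is in particular a convex cocompact subgroup of $\SL_2(\bbR)$ --- since $\SL_2(\bbZ)$ is discrete in $\SL_2(\bbR)$ and the convex core and critical exponent are computed inside the same $\mathbf{H}^2$ --- so Example~\ref{exampleamenaction} applies verbatim to $\Gamma$. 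There the equality follows from two inequalities: Kuhn's Theorem~\ref{thmrep} gives $\|\lambda(\mu)\| \ge \|\pi_{\partial\Gamma}(\mu)\|$, using that the $\Gamma$-action on $(\partial\Gamma,\rho)$ is amenable in the sense of Zimmer and ergodic (part (5) of the cited theorem on Patterson--Sullivan measures), while Shalom's Lemma~\ref{shalomrep} gives the reverse inequality because $\pi_{\partial\Gamma}$ carries a positive $\Gamma$-vector, e.g. the constant function $1$, for which $\langle \pi_{\partial\Gamma}(g)1,1\rangle = \int_{\partial\Gamma}\sqrt{dg_*\rho/d\rho}\,d\rho>0$. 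Concatenating the two halves gives the asserted chain of equalities.

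I do not anticipate any genuine obstacle: the corollary is a bookkeeping step assembling Theorem~\ref{toruskoopman} and Example~\ref{exampleamenaction}. The one detail worth making explicit is the reduction above, namely that ``convex cocompact in $\SL_2(\bbZ)$'' is a special case of ``convex cocompact in $\SL_2(\bbR)$'', which is the hypothesis under which Example~\ref{exampleamenaction} was stated. The assumption that $\supp\mu$ generates $\Gamma$ is not strictly required by the two cited statements as written, but it is harmless and matches the form in which the corollary is applied later; it is relevant only to the internal appeal to Kesten's theorem inside the proof of Theorem~\ref{toruskoopman}.
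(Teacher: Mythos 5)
Your proof is correct and follows exactly the route the paper intends: the paper's only justification for this corollary is the preceding remark ``combining results from this section,'' and you have correctly identified the two ingredients, namely Theorem~\ref{toruskoopman} for $\|\pi_0(\mu)\|=\|\lambda(\mu)\|$ and Example~\ref{exampleamenaction} (via Theorem~\ref{thmrep} and Lemma~\ref{shalomrep}) for $\|\pi_{\partial\Gamma}(\mu)\|=\|\lambda(\mu)\|$. Your observation that a convex cocompact subgroup of $\SL_2(\bbZ)$ is in particular convex cocompact in $\SL_2(\bbR)$ is precisely the implicit reduction needed to invoke the example, so nothing is missing.
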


\section{The Spectral Estimate for the Boundary Representation}\label{estimate}
In this section we prove Theorem~\ref{mainradius}. Consider a group $\Gamma$ that acts by isometries properly cocompactly on a proper quasiruled hyperbolic space $(X,d)$. Fix $x_0 \in X$ a basepoint. We will abuse the notation and use $d$ as a metric on a group, i.e. $d(g,h):= d(g.x_0,h.x_0)$. With this metric, $(\Gamma,d)$ is a proper quasiruled hyperbolic space. Let $\delta_\Gamma$ be the critical exponent of $\Gamma$. For every  $n \in \mathbb{N}$, let $\mu_n$ be a uniform probability measure on the shell $S_n$ (as defined in~\ref{defshell}). 

By Corollary~\ref{koopmanbdry}, Theorem~\ref{T:specialspec} and Theorem~\ref{mainradius} follow immediately from the theorem below.

\begin{theorem} \label{specradthm}\hfill{}
Let $(\Gamma,d)$ and $\mu_n$ as above. Let $\rho$ be Patterson-Sullivan measure on $\partial \Gamma$ and $\pi_{\partial \Gamma}:\Gamma \to U(L^2(\partial \Gamma), \rho)$ the corresponding quasiregular representation of $\Gamma$ on the boundary. Then 
\begin{equation*}
\| \pi_{\partial \Gamma}(\mu_n) \| \leq  e^{-\half \delta_\Gamma n + 2 \log n + O(1) }
\end{equation*}
\end{theorem}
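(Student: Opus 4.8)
The plan is to estimate the operator norm of the self-adjoint Markov operator $\pi_{\partial\Gamma}(\mu_n)$ by controlling the diagonal matrix coefficient $\langle \pi_{\partial\Gamma}(\mu_n)^N f, f\rangle$ for large powers $N$, or more directly by a Schur-type test on the kernel of $\pi_{\partial\Gamma}(\mu_n)$ together with a clever choice of test weight dictated by the Patterson--Sullivan geometry. Concretely, the operator $\pi_{\partial\Gamma}(\mu_n)$ acts on $L^2(\partial\Gamma,\rho)$ with integral kernel
\[
  K_n(\xi,\eta) = \frac{1}{|S_n|}\sum_{g\in S_n} \sqrt{\frac{dg_*\rho}{d\rho}(\xi)}\,\delta_{g^{-1}\cdot\eta}\ \text{-type expression},
\]
so I would first rewrite $\langle \pi_{\partial\Gamma}(\mu_n) F, F\rangle$ for $F\in L^2(\partial\Gamma,\rho)$ as $\frac{1}{|S_n|}\sum_{g\in S_n}\int_{\partial\Gamma} F(g^{-1}\xi)\overline{F(\xi)}\sqrt{\tfrac{dg_*\rho}{d\rho}(\xi)}\,d\rho(\xi)$, and use Theorem~2.3(3), i.e. $\tfrac{dg_*\rho}{d\rho}(\xi)=e^{-\delta_\Gamma\beta_\xi(g.x_0,x_0)+O(1)}$, to convert Radon--Nikodym factors into Busemann cocycles.

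\textbf{Main mechanism.} The key geometric input is the shadow lemma (Corollary~\ref{rhoofshadows}) together with the covering lemma (Lemma~\ref{coveringshadows}): for $g\in S_n$ the shadow $O(g)$ has $\rho$-measure $e^{-\delta_\Gamma n + O(1)}$, the shadows of $S_n$ cover $\partial\Gamma$ with multiplicity at most $L$, and — crucially — for $\xi\notin O(g^{-1})$ roughly speaking the geodesic from $x_0$ towards $\xi$ and the translate $g.x_0$ fellow-travel only for a bounded time, which makes the Busemann cocycle $\beta_\xi(g.x_0,x_0)$ close to $-d(g.x_0,x_0)=-n$, so that $\sqrt{dg_*\rho/d\rho}(\xi)=e^{\frac12\delta_\Gamma n + O(1)}$ there, while the point $g^{-1}\xi$ is pushed into a small shadow. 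I would then apply Cauchy--Schwarz (Schur test) to $\langle\pi_{\partial\Gamma}(\mu_n)F,F\rangle$: bound it by $\frac{1}{|S_n|}\sum_{g\in S_n}\big(\int |F(g^{-1}\xi)|^2 w_g(\xi)d\rho\big)^{1/2}\big(\int|F(\xi)|^2 w_g(\xi)^{-1}\sqrt{\cdots}^2 d\rho\big)^{1/2}$ for a suitable weight $w_g$; the natural choice is to let $w_g$ be comparable to the Radon--Nikodym factor so that one integral becomes $\|F\|_2^2$ after a change of variables and the other produces a geometric sum over $S_n$. Using $|S_n|=e^{\delta_\Gamma n+O(1)}$ (Lemma~\ref{sizeofballs}), the surviving factor is $\frac{1}{|S_n|}\cdot |S_n|\cdot e^{-\frac12\delta_\Gamma n + O(1)}$ up to the multiplicity $L$ and a polynomial loss; tracking the logarithmic corrections carefully — which is where the extra $2\log n$ comes from, presumably from summing over the $\le n$ different shells or from a dyadic decomposition of shadows by size — yields $\|\pi_{\partial\Gamma}(\mu_n)\|\le e^{-\frac12\delta_\Gamma n + 2\log n + O(1)}$.

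\textbf{Main obstacle.} The hard part will be making precise the heuristic ``$\beta_\xi(g.x_0,x_0)\approx -n$ unless $\xi$ lies in a small shadow associated to $g$'' in the quasiruled (non-geodesic) setting, and organizing the contributions of the intermediate regime where $\xi$ partially fellow-travels $g$ — i.e. decomposing $\partial\Gamma$ according to the Gromov product $(\xi|g\cdot x_0)_{x_0}\in[0,n]$ into $O(n)$ annular pieces, estimating $\rho$ of each piece via the shadow lemma, and summing. This is exactly where the polynomial factor $n^2$ (the $2\log n$ in the exponent) is generated, and getting the exponent of $n$ right rather than merely $O(n^{\text{const}})$ requires care: one must use the uniform multiplicity bound $L$ from Lemma~\ref{coveringshadows} at each scale and avoid double-counting across scales. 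I would also need the tripod/centroid lemma (Lemma~\ref{centroid}) to locate, for each $g\in S_n$ and each scale $j$, the centroid of the quasitriangle $(x_0, g.x_0, \xi)$ and thereby identify which element of a shell $S_j$ shadows $\xi$, turning the annular sum into a sum over $\bigcup_{j\le n} S_j$ with controlled weights; the final bookkeeping of these nested sums is the technical crux.
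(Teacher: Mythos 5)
Your geometric ingredients line up with the paper's: shadow lemma (Corollary~\ref{rhoofshadows}), covering multiplicity (Lemma~\ref{coveringshadows}), ball-growth asymptotics (Lemma~\ref{sizeofballs}), the tripod lemma (Lemma~\ref{centroid}), and the stratification by Gromov product / Busemann cocycle value are all used in the paper. The $2\log n$ also appears there for exactly the reason you guess: one $\log n$ from summing over a scale parameter $a\in[0,n]$ in the stratification, the other from the $\log n$ loss in a shadow-intersection count (the paper's Lemma~\ref{rhoshadows}).

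The gap is in the analytic framework. You propose a direct Schur/Cauchy--Schwarz test on $L^2(\partial\Gamma,\rho)$ with weights $w_g$ ``comparable to the Radon--Nikodym factor.'' That choice does not close: for a quasi-regular (hence unitary) representation, a per-$g$ Cauchy--Schwarz with $w_g\sim(dg_*\rho/d\rho)^{1/2}$ just reproduces $\|\pi_{\partial\Gamma}(g)F\|\,\|F\|=\|F\|^2$ and yields the trivial bound $\|\pi_{\partial\Gamma}(\mu_n)\|\le 1$. A Schur test that beats this has to exploit the near-disjointness of the images $\{g^{-1}O : g\in S_n\}$ quantitatively, and your sketch does not specify a weight that achieves this; the phrase ``dictated by the Patterson--Sullivan geometry'' is precisely where the real work is. The paper avoids the problem by first \emph{discretizing} the operator: it picks a fine shell $S_r$, sets $\chi_i=\chi_{O(g_i)}$ for $g_i\in S_r$, forms the finite $|S_r|\times|S_r|$ matrix $\Pi_r(\mu_n)_{ij}=\langle\pi_{\partial\Gamma}(\mu_n)\chi_i,\chi_j\rangle$, proves $\|\Pi_r(\mu_n)\|\le e^{-\delta_\Gamma r-\frac12\delta_\Gamma n+2\log n+O(1)}$ by a Gershgorin column-sum bound (which is the correct Schur test here — the disjointness of the shadows $O_i$ makes the off-diagonal interactions controllable via Lemma~\ref{rhoshadows}), and then shows by an approximation argument (constructing, for each nonnegative step function $f$, a vector $\vec v$ with $\langle\pi_{\partial\Gamma}(\mu_n)f,f\rangle\le\vec v^{\,T}\Pi_r(\mu_n)\vec v$ and $\|\vec v\|^2\le e^{\delta_\Gamma r+O(1)}$) that $\|\pi_{\partial\Gamma}(\mu_n)\|\le e^{\delta_\Gamma r+O(1)}\|\Pi_r(\mu_n)\|$, recovering the claimed bound. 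This discretization step — which converts a singular measure-valued kernel into a genuine finite matrix on which Gershgorin is sharp — is the piece missing from your plan, and without it (or a working replacement weight $w_g$) the argument does not reach a nontrivial estimate.
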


We will call $\pi_{\partial \Gamma}(\mu_n)$ the boundary operators. We fix $n$ throughout the proof. We will bound the operator norm of the boundary operator by testing it on a dense set of simple functions. For each $r\in \mathbb{N}$ we will construct a finite dimensional operator $\Pi_r$  that mimics the application of $\pi_{\partial \Gamma}(\mu_n)$ to a step function $f$ (for large enough $r$ that depends on the complexity of $f$). We will then study $\Pi_r$ and relate their operator norms to the operator norm of $\pi_{\partial \Gamma}(\mu_n)$. 
\par
Let $r \in \bbN$. Enumerate the elements $\{ g_j \}$ in the shell $S_r \subset \Gamma$. Denote by $O_j=O(g_j)$ the shadows as defined in~\ref{ogdef}, and their characteristic functions by $\chi_j=\chi_{O_j}$. Define a $|S_r|\times |S_r|$ matrix $\Pi_r(\mu_n)$ by
\begin{equation*}
\left( \Pi_r(\mu_n) \right)_{ij} := \left< \pi_{\partial \Gamma}(\mu_n)\chi_i,\chi_j \right>  = \int_{\partial \Gamma} \left(\pi_{\partial \Gamma}(\mu_n) \chi_i\right)(\xi) \chi_j(\xi) d\rho(\xi)
\end{equation*}
The main step will be estimating the operator norms of finite dimensional operators $\Pi_r(\mu_n)$
\begin{theorem}\label{matrixnorm}
For $\Pi_r(\mu_n)$ as above we have
\begin{equation*}
 \| \Pi_r(\mu_n)\| \leq e^{-\delta_\Gamma r  - \half \delta_\Gamma n + 2\log n + O(1) }
\end{equation*}

\end{theorem}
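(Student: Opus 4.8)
The plan is to read off from the definition that $\Pi_r(\mu_n)$ is simply $\pi_{\partial\Gamma}(\mu_n)$ tested against the (non‑orthonormal, boundedly overlapping) family $\{\chi_i\}_{i\in S_r}$: for $c,c'\in\ell^2(S_r)$ one has $\langle\Pi_r(\mu_n)c,c'\rangle=\langle\pi_{\partial\Gamma}(\mu_n)f,f'\rangle$ with $f=\sum_i c_i\chi_i$ and $f'=\sum_j c'_j\chi_j$. Since $\|\chi_i\|_2^2=\rho(O_i)=e^{-\delta_\Gamma r+O(1)}$ by Corollary~\ref{rhoofshadows} and the $O_i$ overlap with multiplicity $\le L$ by Lemma~\ref{coveringshadows}, the remaining task is to bound $|\langle\pi_{\partial\Gamma}(\mu_n)f,f'\rangle|$ by $e^{-\delta_\Gamma r}\cdot e^{-\half\delta_\Gamma n+2\log n+O(1)}\,\|c\|\,\|c'\|$ directly, via the boundary geometry; the factor $e^{-\delta_\Gamma r}$ is what the "one copy of $\|\chi\|_2^2$" contributes, and the whole point is the extraction of the remaining $e^{-\half\delta_\Gamma n+2\log n+O(1)}$.

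The geometric input is the action of a single isometry on a single shadow. One has $\pi_{\partial\Gamma}(g)\chi_{O(g_i)}=\chi_{g.O(g_i)}\cdot\sqrt{dg_*\rho/d\rho}$ with $g.O(g_i)=O_C(g.x_0,gg_i.x_0)$, and by property (3) of the Patterson--Sullivan measures $\sqrt{dg_*\rho/d\rho}(\xi)=e^{-\half\delta_\Gamma\beta_\xi(g.x_0,x_0)+O(1)}$. On $g.O(g_i)$, where $\xi$ lies in the direction of $gg_i.x_0$, one computes $\beta_\xi(g.x_0,x_0)=d(gg_i.x_0,g.x_0)-d(gg_i.x_0,x_0)+O(1)=|g_i|-|gg_i|+O(1)$, whence
\[
\pi_{\partial\Gamma}(g)\chi_{O(g_i)}=e^{\half\delta_\Gamma(|gg_i|-|g_i|)+O(1)}\cdot\chi_{g.O(g_i)} .
\]
When $g$ and $g_i$ align, i.e. $|gg_i|=|g|+|g_i|+O(1)$ — the generic case, since for fixed $g_i$ the set of $g\in S_n$ whose $g^{-1}$ points into $O(g_i)$ has relative size $e^{-\delta_\Gamma|g_i|}$ — the amplitude is $e^{\half\delta_\Gamma n+O(1)}$ and $g.O(g_i)$ is (up to bounded ambiguity) a deep shadow $O'(gg_i)$ of $\rho$‑mass $e^{-\delta_\Gamma(n+r)+O(1)}$; note $e^{\delta_\Gamma n}\!\cdot\!e^{-\delta_\Gamma(n+r)}=e^{-\delta_\Gamma r}=\|\chi_{O(g_i)}\|_2^2$, consistent with unitarity. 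In the non‑generic cases, where $gg_i$ undergoes a cancellation of depth $t$, the amplitude and the mass of $g.O(g_i)$ each grow by $e^{\delta_\Gamma t}$ while the number of such $g$ drops by $e^{\delta_\Gamma t}$; these get summed over $0\le t\le n$ at the end.

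The core of the argument — and the source of the gain $e^{-\half\delta_\Gamma n}$ — is that the deep shadows $\{O'(gg_i):g\in S_n\}$ form, for fixed $i$, a boundedly overlapping family, so that pairing $\pi_{\partial\Gamma}(\mu_n)f=|S_n|^{-1}\sum_{g,i}c_i\,\pi_{\partial\Gamma}(g)\chi_{O(g_i)}$ against the scale‑$r$ function $f'$ only sees the scale‑$r$ piece of each deep shadow: $\langle\chi_{O'(gg_i)},\chi_{O(g_j)}\rangle=\rho(O'(gg_i)\cap O(g_j))$ equals $e^{-\delta_\Gamma(n+r)+O(1)}$ when $g_j$ is one of the $\le L'$ scale‑$r$ elements $l(g,i)$ on the geodesic from $e$ to $gg_i$, and vanishes otherwise. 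Hence, in the generic regime,
\[
\langle\pi_{\partial\Gamma}(\mu_n)f,f'\rangle=\frac{e^{\,\half\delta_\Gamma n-\delta_\Gamma(n+r)+O(1)}}{|S_n|}\sum_i c_i\sum_{g\in S_n}\overline{c'_{l(g,i)}}
=\frac{e^{-\half\delta_\Gamma n-\delta_\Gamma r+O(1)}}{|S_n|}\sum_i c_i\sum_{g\in S_n}\overline{c'_{l(g,i)}} .
\]
Two applications of Cauchy--Schwarz, first in $i$ then in $g$, bound $\bigl|\sum_i c_i\sum_g\overline{c'_{l(g,i)}}\bigr|$ by $|S_n|^{1/2}\|c\|\bigl(\sum_{g\in S_n}\sum_i|c'_{l(g,i)}|^2\bigr)^{1/2}$, and for each fixed $g$ the map $i\mapsto l(g,i)$ is at most $O(1)$‑to‑one (two $i$'s with a common image $l$ force $gg_i,gg_{i'}$ to agree with $g_l$ out to distance $r$, which pins $g_i,g_{i'}$ to within bounded ambiguity since $|g_i|=|g_{i'}|=r+O(1)$), so that $\sum_i|c'_{l(g,i)}|^2\le O(1)\|c'\|^2$ and the double sum is $\le O(1)|S_n|\,\|c'\|^2$. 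This gives $|\langle\pi_{\partial\Gamma}(\mu_n)f,f'\rangle|\le O(1)\,e^{-\half\delta_\Gamma n-\delta_\Gamma r}\|c\|\,\|c'\|$; summing the cancellation‑depth‑$t$ contributions over $0\le t\le n$, which enters once on the $f$‑side and symmetrically once more in the pairing with $f'$, upgrades the $O(1)$ to $O(n^2)=e^{2\log n+O(1)}$, and together with Lemma~\ref{sizeofballs} (so $|S_n|=e^{\delta_\Gamma n+O(1)}$, the width $k$ costing only $e^{O(1)}$) this yields $\|\Pi_r(\mu_n)\|\le e^{-\delta_\Gamma r-\half\delta_\Gamma n+2\log n+O(1)}$.

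The hard part is precisely this spreading step, i.e. capturing the cancellation among the $\pi_{\partial\Gamma}(g)$, $g\in S_n$. A triangle inequality $\|\Pi_r(\mu_n)\|\le|S_n|^{-1}\sum_{g\in S_n}\|\Pi_r(g)\|$ is hopeless: for essentially every $g\in S_n$ there are $g_i,g_j\in S_r$ with $gg_i.x_0$ on the geodesic through $g_j.x_0$ and $|gg_i|-|g_i|=O(1)$, so already $(\Pi_r(g))_{ij}=e^{-\delta_\Gamma r+O(1)}$ and thus $\|\Pi_r(g)\|\ge e^{-\delta_\Gamma r+O(1)}$ — the triangle inequality would lose the entire factor $e^{-\half\delta_\Gamma n}$; and a crude Schur test against the constant weight only gives the wrong‑signed $e^{+\half\delta_\Gamma n-\delta_\Gamma r+O(1)}$, because each operator $\pi_{\partial\Gamma}(g)$ amplifies $L^2$‑mass by $e^{\half\delta_\Gamma|g|}$ through the square‑root Jacobian. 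The decay comes solely from the near‑orthogonality of $\{\pi_{\partial\Gamma}(g)\chi_{O(g_i)}:g\in S_n\}$, that is, from distinct $g$ pushing a fixed scale‑$r$ shadow into essentially disjoint deep shadows; making this quantitative, and doing the bookkeeping of the cancellation depths that produces the $e^{2\log n}$, is the technical heart of the proof.
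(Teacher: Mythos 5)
Your route through the bilinear form $\langle\pi_{\partial\Gamma}(\mu_n)f,f'\rangle$, the explicit formula $\pi_{\partial\Gamma}(g)\chi_{O(g_i)}=e^{\frac{1}{2}\delta_\Gamma(|gg_i|-|g_i|)+O(1)}\chi_{g.O(g_i)}$, and near-orthogonality of the pushed shadows is genuinely different from the paper's, which proves the bound column-by-column via the Gershgorin circle theorem (the matrix is nonnegative and symmetric, so the spectral radius is the operator norm and is dominated by the max column $\ell^1$-norm). There the sum over $h\in S_n$ is stratified by the Busemann level $a$ (i.e. by $-\beta_{g_i}(h,e)\approx n-2a$); the Radon--Nikodym factor is $e^{\frac{1}{2}\delta_\Gamma(n-2a)}$, the level has $\#X_a\approx e^{\delta_\Gamma a}$ elements, and these exactly cancel to give $e^{\frac{1}{2}\delta_\Gamma n}$ per level, with one $\log n$ from the number of levels and the other from $\sum_j\rho(O_i\cap hO_j)\le e^{-\delta_\Gamma r+\log n+O(1)}$. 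Your remark that the Schur test ``only gives the wrong-signed $e^{+\frac{1}{2}\delta_\Gamma n-\delta_\Gamma r}$'' is therefore too hasty: that is only the crude version in which one bounds the Radon--Nikodym factor uniformly by its maximum $e^{\frac{1}{2}\delta_\Gamma n}$; the paper's column-sum bound succeeds precisely by exploiting the anti-correlation between the amplitude and the number of $h$'s attaining it, so the Gershgorin/Schur route does work.

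More seriously, there is a gap in your Cauchy--Schwarz step. You assert that for each fixed $g$ the map $i\mapsto l(g,i)$ is at most $O(1)$-to-one, so that $\sum_i|c'_{l(g,i)}|^2\le O(1)\|c'\|^2$. This is false. In the aligned regime $|gg_i|=n+r+O(1)$ with $r>n$ (and the reduction step does take $r$ arbitrarily large), $l(g,i)$ is the element at distance $r-n$ beyond $g$ on $[g,gg_i]$, i.e.\ $l(g,i)\approx g\cdot p_{r-n}(g_i)$. Two $g_i,g_{i'}\in S_r$ yield the same $l(g,i)$ precisely when they agree to depth $r-n$ from $e$, which leaves $\approx e^{\delta_\Gamma n}$ choices of $g_i$ --- not $O(1)$. (Your parenthetical justification, that a common image ``pins $g_i,g_{i'}$ to within bounded ambiguity since $|g_i|=r+O(1)$,'' only constrains the first $r-n$ letters of $g_i$, not all $r$ of them.) Consequently $\sum_i|c'_{l(g,i)}|^2$ can be as large as $e^{\delta_\Gamma n}\|c'\|^2$ when $c'$ is concentrated at a point of the image. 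The aggregate inequality $\sum_{g\in S_n}\sum_i|c'_{l(g,i)}|^2\le O(1)\,|S_n|\,\|c'\|^2$ that you actually use is nevertheless true, but for a different reason: for a fixed target index $j$, only $O(1)$ choices of $g$ are compatible (one needs $g\approx p_n(g_j)$), and then $\approx e^{\delta_\Gamma n}$ choices of $i$, so $\#\{(g,i):l(g,i)=j\}\approx |S_n|$. So the conclusion of the step survives, but the argument you gave for it does not; you need to count joint $(g,i)$-fibers rather than per-$g$ fibers, and as written the per-$g$ claim is wrong.
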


In \S~\ref{reductionlinalg} we will show that Theorem~\ref{matrixnorm} implies Theorem~\ref{specradthm}. In \S~\ref{matrixnormsec} we will prove Theorem~\ref{matrixnorm}

\subsection{Reduction to linear algebra}\label{reductionlinalg}\hfill{}\par
\begin{proof} (Theorem~\ref{matrixnorm} $\Longrightarrow$ Theorem~\ref{specradthm}) 

Recall that 
\[
\| \pi_{\partial \Gamma}(\mu_n) \| = \sup_{\|f\|=1} \left< \pi_{\partial \Gamma}(\mu_n) f , f \right>
\]
Since $\pi_{\partial \Gamma}(\mu_n)$ is an operator preserving the cone of positive functions, it is sufficient to take the supremum only over non-negative functions(or a dense subset of it). 
 \par 
We fix a visual metric $d_\epsilon$ for some small enough $\epsilon>0$. Recall that the balls in the visual metric generate the topology. We consider
\[ H_+ :=\setdef { f=\sum_{i=1}^{t} a_i \chi_{I_i} }{ a_i> 0 ,  I_i \subseteq \partial \Gamma \text{ disjoint closed balls},\|f\|=1 }
 \]
$H_+$ is clearly dense in the set of non-negative functions of norm 1. 
\medskip

Our strategy will be to show that for each $f \in H_+$ there exists $r>0$ and a vector $\vec{v}\in \mathbb{R}^{|S_r|}$ such that 
\begin{itemize}
\item[(M1)] $\left< \pi_{\partial \Gamma}(\mu_n)f,f \right> \leq \vec{v}^T \Pi_r(\mu_n) \vec{v}$
\item[(M2)] $\|\vec{v}\|^2 \leq e^{\delta_\Gamma r + O(1)}  $
\end{itemize}
where $\|\vec{v}\|$ is the Euclidean norm on $\mathbb{R}^{|S_r|}$. \par

This, combining with Theorem~\ref{matrixnorm} will imply that for each $f\in H_+$ we have some $\vec{v}$ and $r$ satisfying
\begin{align*}
\left< \pi_{\partial \Gamma}(\mu_n)f,f\right> &\le e^{\delta_\Gamma r + O(1)}\frac{\vec{v}^T \Pi_r(\mu_n) \vec{v}}{\|\vec{v}\|^2} 
&\le e^{\delta_\Gamma r +O(1)} \| \Pi_r(\mu_n) \| &\le \\ &\le e^{-\half \delta_\Gamma n + 2\log n +O(1)}
\end{align*}
Taking the supremum over $f\in H_+$ will finish the proof of Theorem~\ref{specradthm}.
\medskip

We are left to construct $v$ from $f$ satisfying the properties (M1) and (M2). Fix an element in $H_+$ of the form $f=\sum_{i=1}^{t} a_i \chi_{I_i}$ with $\| f\|=1$ .
Denote by $I_{i+\eta}$ the closed balls having the same centers as $I_i$, but with radius larger by $\eta$. Fix $\eta>0$ such that for every $1 \leq  i\leq t$ we have $\rho(I_{i+\eta}) \leq 2\rho(I_i)$ and so that $I_{i+\eta}$ are pairwise disjoint for all $i$. Such $\eta$ exists, since $I_i$ is a finite family. By Proposition~\ref{diamofshadows} bounding the diameter of the shadows we can find $r$ large enough, so that two following conditions are satisfied:
\begin{itemize}
\item[(S1)] for every $g_j \in S_r$ we have $diam (O_j) \leq \frac{1}{3}\min_{i,i'} d_\epsilon (I_i,I_{i'})$
\item[(S2)] for every $g_j \in S_r$ we have $diam (O_j) \leq \eta$
\end{itemize}
\par
For each $1\leq j\leq |S_r|$ define

\[v_j =
\left\{
	\begin{array}{ll}
		a_i  & \mbox{if } \exists i \text{ s.t. } I_i\cap O_j \neq \emptyset  \\
		0 & \mbox{otherwise}
	\end{array}
\right.
\]
$\vec{v}=(v_j)$ is well defined since by condition (S1) each $O_j$ intersects at most one of the sets from the family $\{I_i\}$. 
\[ \]

Let $f^v = \sum_{j=1}^{|S_r|}v_j \chi_j$ 

By Theorem~\ref{coveringshadows} there exists $L\in \mathbb{N}$ so that each point in the boundary is covered by at most $L$ different shadows of elements in $S_r$. Combining it with (S2)  we have for each $1\leq i\leq t$
\begin{equation}\label{covereqn}
\chi_{I_i} \leq \sum_{j:O_j \cap I_i \neq \emptyset} \chi_j \leq  L \chi_{I+\eta} 
\end{equation}
In particular from the left inequality in~\eqref{covereqn}
\begin{equation*}\label{fvandfw0}
f \leq f^v
\end{equation*}

It follows now that $\vec{v}$ satisfies (M1), i.e.
\begin{eqnarray*}
\left< \pi_{\partial \Gamma}(\mu_n)f,f \right> &\le \left< \pi_{\partial \Gamma}(\mu_n)f^v,f^v \right> &=  \vec{v}^T \Pi_r(\mu_n) \vec{v}
\end{eqnarray*}
To show (M2) we are left to estimate the of $\vec{v}$
\begin{align*}
\| \vec{v} \| ^2 = \sum_i \sum_{j:O_j \cap I_i \neq \emptyset} a_i^2 &\stackrel{(1)}{=} e^{\delta_\Gamma r +O(1)}\sum_i a_i^2 \sum_{j:O_j \cap I_i \neq \emptyset} \rho(O_j) 
\stackrel{(2)}{\le} \\
&\stackrel{(2)}{\le}  e^{\delta_\Gamma r +O(1)} \sum_i a_i^2 L \rho(I_{i+\eta}) 
  \stackrel{(3)}{\leq} \\
    &\stackrel{(3)}{\leq} e^{\delta_\Gamma r +O(1)}2L  \sum_i a_i^2 \rho(I_{i}) 
\le \\ &\le e^{\delta_\Gamma r }\|f\|^2 =  e^{\delta_\Gamma r +O(1) }
\end{align*}
The first equality follows from Corollary~\ref{rhoofshadows}, which, if applied here, states $\rho(O_j)=e^{-\delta_\Gamma r +O(1) }$, the second follows from integrating the right  inequality in~\ref{covereqn}, the third is obtained from our choice of $\eta$(since $\rho(I_{i+\eta})\leq 2\rho(I_i)$). This finishes the proof.
\end{proof}

\subsection{Hyperbolic geometry}\hfill{}\par
The proof of Theorem~\ref{matrixnorm} relies on the hyperbolicity of the metric $d$ on $\Gamma$. We prove a sequence of technical lemmas which will be necessary in \S~\ref{matrixnormsec}. 

\begin{lemma}\label{busemannapprox}
There exist  $R, \Delta \geq 0$ depending only on $(\Gamma,d)$, such that for any $r>n+\Delta$, for any $g \in S_r$, $\xi \in O(g)=O_C(g,e)$ and $h\in S_n$ we have 
\begin{equation}
| \beta_\xi(h,e) - \beta_{g}(h,e) | \leq R
\end{equation}
\end{lemma}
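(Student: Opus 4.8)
The statement is a standard ``Busemann functions are approximated by the view from far enough along a shadowing quasigeodesic'' estimate, so the plan is to compare both quantities to a common third quantity, namely a Gromov product computed in $\Gamma$. Recall that for $h \in S_n$ and $\xi \in \partial \Gamma$ one has the identity $\beta_\xi(h,e) = d(h,e) - 2(h|\xi)_e + O(1)$, with the $O(1)$ only depending on the hyperbolicity and quasiruler data of $(\Gamma,d)$; this follows from the definition of $\beta_\xi$ together with the comparison between the extended Gromov product to the boundary and the $\limsup$ of Gromov products along a quasiruler ray to $\xi$ (this is exactly the content of the material recalled in \S~\ref{psintro}, and is where the quasiruled hypothesis is used). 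Likewise $\beta_g(h,e) = d(h,e) - 2(h|g)_e$ exactly, by the defining formula for the (finite-point) Busemann function and the Gromov product. Subtracting, the claim reduces to showing
\begin{equation*}
\bigl| (h|\xi)_e - (h|g)_e \bigr| \leq R'
\end{equation*}
for all $g\in S_r$, $\xi\in O(g)$, $h\in S_n$, whenever $r > n + \Delta$.

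For that inequality I would argue as follows. Since $\xi \in O(g) = O_C(e,g)$, there is a quasiruler ray from $e$ to $\xi$ passing within distance $C$ of $g$; combined with $\delta$-thinness of the quasitriangle on $\{e, g, \xi\}$ and the tripod lemma (Lemma~\ref{centroid}), the point $g$ lies within a bounded distance of any quasiruler ray from $e$ to $\xi$, provided $d(e,g)=r$ is not much smaller than $d(e,h)=n$ — here is precisely where the hypothesis $r > n+\Delta$ enters: it guarantees that the centroid of the triangle $\{e,g,\xi\}$ and the centroid of the triangle $\{e,h,\xi\}$, measured along the common edge from $e$ towards $\xi$, are at distance roughly $\min(n, \ldots)$ from $e$, and in particular that $h$'s projection onto the $e$-to-$\xi$ ray is ``before'' $g$. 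A clean way to package this: by hyperbolicity, $(h|\xi)_e \geq \min\bigl( (h|g)_e, (g|\xi)_e \bigr) - \delta'$ and symmetrically $(h|g)_e \geq \min\bigl( (h|\xi)_e, (g|\xi)_e\bigr) - \delta'$, for a constant $\delta'$ depending only on the hyperbolicity constant (the four-point/three-point inequality, valid up to additive constants in quasiruled hyperbolic spaces). Since $\xi \in O_C(e,g)$ forces $(g|\xi)_e \geq d(e,g) - C = r - C$, while $(h|g)_e \leq d(e,h) = n$ and $(h|\xi)_e \leq n$ trivially, choosing $\Delta$ larger than $C + \delta' + (\text{slack})$ makes $(g|\xi)_e$ strictly dominate both other terms in the two $\min$'s above, so both inequalities collapse to $(h|\xi)_e \geq (h|g)_e - \delta'$ and $(h|g)_e \geq (h|\xi)_e - \delta'$, i.e. $|(h|\xi)_e - (h|g)_e| \leq \delta'$. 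Setting $R = 2\delta' + O(1)$ (absorbing the additive errors from the two Busemann-to-Gromov-product conversions) yields the lemma.

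\textbf{Main obstacle.} The only genuinely delicate point is justifying the two additive-error identities relating $\beta_\xi$ and $\beta_g$ to Gromov products \emph{in the quasiruled (non-geodesic) setting}: one must check that the $\sup$ over quasiruler rays in the definition of $\beta_\xi$ and the $\limsup$ in the extended Gromov product are comparable up to a constant depending only on $(\delta,\lambda,c,\tau)$, using the tripod lemma and the fact that any two quasiruler rays to the same boundary point are at bounded Hausdorff distance. Once that bookkeeping is done, the three-point inequality argument above is routine, and the role of $r > n + \Delta$ is simply to make the ``$g$ is far out toward $\xi$, past $h$'' regime hold so that the $\min$'s degenerate in the favourable direction.
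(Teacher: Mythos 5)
Your argument is correct, but it takes a genuinely different route from the paper. The paper's proof works directly with quasiruler rays: it fixes a quasiruler $z$ from $e$ to $\xi$ along which $\beta_\xi(h,e)$ is nearly realized, shows (via $\delta$-thinness and the shadow condition) that $z$ must pass within a bounded distance of $g$, then shows that a quasiruler from $h$ to a far point $z(t_0)$ also passes near $g$ (this is where $r>n+\Delta$ enters — $g$ is farther out than $h$, so thinness of the triangle $e,h,z(t_0)$ places $g$ near the $[h,z(t_0)]$ edge rather than $[e,h]$), and finally subtracts the two $\tau$-quasiruler inequalities at $g$ to compare $d(z(t_0),h)-d(z(t_0),e)$ with $d(g,h)-d(g,e)$. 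You instead rewrite both Busemann functions as $d(h,e)-2(\,\cdot\,|\,\cdot\,)_e$, one exactly and one up to $O(1)$, and reduce the lemma to $|(h|\xi)_e-(h|g)_e|\le R'$, which you get from the Gromov four-point inequality once $r>n+\Delta$ forces $(g|\xi)_e \ge r-C$ to dominate both $(h|g)_e\le n$ and $(h|\xi)_e\le n+O(1)$, collapsing the two $\min$'s in the favourable direction. The two proofs use the hypothesis $r>n+\Delta$ for the same geometric reason ($g$ lies past $h$ toward $\xi$), but yours avoids any explicit construction of rays and centroid points and replaces the two $\tau$-quasiruler estimates with a single application of the four-point inequality; the cost, which you correctly flag, is that you must invoke the comparison between the $\sup/\limsup$ defining $\beta_\xi$ and the $\limsup$ defining the extended Gromov product in the quasiruled setting, together with the four-point inequality for quasiruled spaces — these are standard consequences of the tripod lemma and Morse-type stability of quasirulers (and are part of what the Blach\`{e}re–Ha\"{i}ssinsky–Mathieu appendix establishes), but they are not spelled out in the paper, whereas the paper's argument uses only the raw $\tau$-quasiruler and thinness axioms. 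Your version is a clean alternative and arguably more transparent once the boundary Gromov-product formalism is taken for granted.
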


\begin{proof}
Let $\Delta$ be the maximal thickness of quasi-triangles in $\Gamma$. We will show that $R=4(\tau +C+\Delta)+1$ suffices. Let $r>n+\Delta$ and choose $g\in S_r$ and $\xi \in O(g)$. 
\par 
Let $z(t)$ be a quasiruler from $e$ to $\xi$ s.t. for some large $t_0$ we have
$ \beta_\xi(h,0) - d(z(t_0),h)-d(z(t_0),e)  \leq 1$. Note that by definition of $O(g)$ there is some quasiruler from $e$ to $\xi$, that passes in a $C-$neighborhood of $g$. Using $\Delta$-thinness of triangles, we can conclude that any quasiruler from $e$ to $\xi$ must pass in a $(C+\Delta)-$neighborhood of $g$. Let $s \in \mathbb{R}$, so that $z(s)$ is at distance at most $C+\Delta$ from $g$. We can assume $t_0 > s$. Then, 
\[ 0 \leq   d(e,z(s)) + d(z(s),z(t_0)) - d(e,z(t_0))  \leq 2\tau \]
The right hand side of the above inequality holds since $z(t)$ is a $\tau$-quasiruler, and the left hand side is the triangle inequality.
\par 
Let $z'(t)$ be a quasiruler between $h,z(t_0)$. Similarly, $z'(t)$ has to pass through the $C+\Delta$-neighborhood of $g$. Let $s'$ such that $z'(s')$ is in the $C+\Delta$ neighborhood of $g$. Similarly, by the property of quasiruler for $z'(t)$
\[ 0 \leq  d(h,z'(s')) + d(z'(s'),z(t_0))  -d(h,z(t_0)) \leq 2\tau \]
Noting that $z'(s')$ and $z(s)$ are $(C+\Delta)-$close to $g$ and $z(t_0)=z'(t'_0)$, we can substract two of the above inequalities to get 

\[| \beta_\xi(h,e) - \beta_g(h,e) | \leq   4\tau +1+4(C+\Delta)=R \]
\end{proof}

\begin{cor} \label{rndbound}
With $\Delta$ as in Lemma~\ref{busemannapprox}, for any $r>n+\Delta$ and for each $g\in S_r, h\in S_n, \xi \in O(g)$ we have
\begin{equation*}
\frac{dh_* \rho}{d \rho}(\xi) =e^{-\beta_{g}(h,e)\delta_\Gamma +O(1)}
\end{equation*}
\end{cor}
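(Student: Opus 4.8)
The plan is to deduce Corollary~\ref{rndbound} directly from the Radon–Nikodym formula for Patterson–Sullivan measures (part (3) of the quasiconformal measure theorem) combined with the Busemann approximation just established in Lemma~\ref{busemannapprox}. First I would recall that for the $\Gamma$-equivariant family $\{\rho_x\}$ one has, for any $g\in\Gamma$ and any $\xi\in\partial\Gamma$,
\[
\frac{dg_*\rho}{d\rho}(\xi) = \frac{d\rho_{g.x_0}}{d\rho_{x_0}}(\xi) = e^{-\delta_\Gamma\,\beta_\xi(g.x_0,x_0)+O(1)} = e^{-\delta_\Gamma\,\beta_\xi(g,e)+O(1)},
\]
where in the last step I use the identification of the metric on $\Gamma$ with the orbit metric on $X$ via $x_0$. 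Applying this with $g$ replaced by $h\in S_n$ gives $\frac{dh_*\rho}{d\rho}(\xi) = e^{-\delta_\Gamma\,\beta_\xi(h,e)+O(1)}$ for every $\xi\in\partial\Gamma$, with the $O(1)$ uniform over $h$ and $\xi$.

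Next I would invoke Lemma~\ref{busemannapprox}: under the hypothesis $r>n+\Delta$, for $g\in S_r$, $h\in S_n$ and $\xi\in O(g)$ we have $|\beta_\xi(h,e)-\beta_g(h,e)|\le R$, where $R$ depends only on $(\Gamma,d)$. Multiplying through by $\delta_\Gamma$ shows that $e^{-\delta_\Gamma\beta_\xi(h,e)}$ and $e^{-\delta_\Gamma\beta_g(h,e)}$ differ by a bounded multiplicative factor, i.e.\ $e^{-\delta_\Gamma\beta_\xi(h,e)+O(1)} = e^{-\delta_\Gamma\beta_g(h,e)+O(1)}$ with the $O(1)$ again uniform in $g,h,\xi$ (this is the point of having the constants $R,\Delta$ depend only on the space). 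Substituting this into the displayed Radon–Nikodym formula yields
\[
\frac{dh_*\rho}{d\rho}(\xi) = e^{-\delta_\Gamma\,\beta_g(h,e)+O(1)},
\]
which is exactly the assertion of the corollary.

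There is essentially no genuine obstacle here; the statement is a routine combination of two ingredients already in hand. The only point that needs a little care — and which I expect to be the mildly delicate part — is tracking that the $O(1)$ error terms are genuinely uniform: the one coming from the quasiconformal density is uniform by the cited theorem, and the one coming from Lemma~\ref{busemannapprox} is uniform because $R$ depends only on $(\Gamma,d)$ and not on $n$, $r$, $g$, $h$ or $\xi$. Since the proof of Theorem~\ref{matrixnorm} will sum such estimates over $h\in S_n$ and over shadows, this uniformity is what makes the corollary usable downstream, so I would state explicitly in the proof that all implied constants are absolute. With that noted, the proof is a two-line substitution and I would write it as such.
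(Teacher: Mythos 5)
Your proposal is correct and is exactly the argument the paper intends (the paper leaves this corollary without an explicit proof since it is an immediate combination of the equivariance formula $h_*\rho_{x_0}=\rho_{h.x_0}$, the quasiconformal Radon--Nikodym estimate, and Lemma~\ref{busemannapprox}). Your attention to uniformity of the $O(1)$ constants is the right thing to flag, and it is indeed guaranteed by the cited results.
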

Define
\begin{equation}\label{xadef}
X_a(g,n) = \setdef{ h \in S_n }{ n-2a-R < -\beta_{g}(h,e) \leq n-2a } 
\end{equation}
where $R$ is the constant from the Lemma~\ref{busemannapprox}. Without loss of generality we can take $R$ enough large, so that our estimate for the size of the shells $S_{n,R}$ from Lemma~\ref{sizeofballs} holds.  

\begin{lemma} \label{sizeofxa}
With $n,r$ as above, for any $0 \leq a \leq n$ and $g\in S_r$ we have
\begin{equation*}
\# X_a(g,n)    \leq e^{\delta_\Gamma a +O(1)} 
\end{equation*}
\end{lemma}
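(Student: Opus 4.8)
The plan is to bound $\#X_a(g,n)$ by identifying $X_a(g,n)$ with a set of group elements lying in a thin annular region determined by the Busemann-type condition, and then invoking the shadow lemma together with the exponential growth estimates for shells. First I would observe that the condition $n - 2a - R < -\beta_g(h,e) \le n - 2a$ says, roughly, that the "height of $h$ as seen from the direction of $g$" lies in a window of fixed width $R$ around the value $n-2a$. Since $\beta_g(h,e) = d(g,h) - d(g,e) = d(g,h) - r$, this is equivalent to $r - (n-2a) \le d(g,h) < r - (n-2a) + R$, so $X_a(g,n)$ consists precisely of the elements $h$ of the shell $S_n$ whose distance to $g$ lies in a window of width $R$ centered near $r - n + 2a$. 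Thus $X_a(g,n) \subseteq S_n \cap \bigl(B_d(g, r - n + 2a + R) \setminus B_d(g, r - n + 2a)\bigr)$, an intersection of a shell of radius $n$ about $e$ with an $R$-thick shell about $g$.

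The heart of the estimate is then a counting argument in the hyperbolic space $(\Gamma,d)$. I would argue that the elements $h$ in this intersection all have their shadows $O(h)$ (viewed from $e$) essentially contained in a common region of the boundary whose $\rho$-measure is $e^{-\delta_\Gamma(r - (n-2a))+O(1)} = e^{-\delta_\Gamma a + O(1)}$ after using $r = $ (essentially, up to $O(1)$, the quantity making $g \in S_r$) — more precisely, the point is that $h$ lies within bounded distance of a geodesic/quasiruler from $e$ to some $\xi \in O(g)$ at parameter roughly $n$, so $O(h)$ is comparable to a sub-shadow of $O(g)$ "at depth $n$," which by Corollary~\ref{rhoofshadows} and Proposition~\ref{diamofshadows} has measure $e^{-\delta_\Gamma n + O(1)}$, while these sub-shadows sit inside $O(g)$, of measure $e^{-\delta_\Gamma r + O(1)}$. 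Combined with the bounded-overlap property of shadows of elements in a single shell (Lemma~\ref{coveringshadows}), summing $\rho(O(h)) = e^{-\delta_\Gamma n + O(1)}$ over $h \in X_a(g,n)$ gives $\#X_a(g,n)\cdot e^{-\delta_\Gamma n + O(1)} \le L \cdot e^{-\delta_\Gamma(r-n+2a)+O(1)}$; rearranging and using that $r$ itself is $d(g,e) = O(1) + $ the ambient shell index so that the $r$-dependence cancels against the $n$ in $r-n$, one is left with $\#X_a(g,n) \le e^{\delta_\Gamma a + O(1)}$. Alternatively, and perhaps more cleanly, one applies Lemma~\ref{sizeofballs} directly: the intersection $S_n \cap (R\text{-shell about } g)$ is contained in a set of elements all at distance $\approx n$ from $e$ and $\approx r-n+2a$ from $g$, and by thinness of triangles with vertices $e, g, h$ such $h$ are confined to a bounded neighborhood of a single point (the centroid region), so their number is controlled by the size of a ball of radius $O(1)$ shifted — the genuinely variable count being the number of directions, which is $e^{\delta_\Gamma a + O(1)}$.

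I expect the main obstacle to be making precise the claim that all $h \in X_a(g,n)$ have shadows that are comparable to sub-shadows of $O(g)$ located at a controlled depth — i.e. the hyperbolic-geometry bookkeeping that converts the Busemann window condition into containment of $O(h)$ in a region of the boundary of measure $e^{-\delta_\Gamma a + O(1)}$, uniformly in $g$ and $n$. This requires combining $\Delta$-thinness of quasitriangles (to locate $h$ near a quasiruler from $e$ toward $O(g)$), the tripod lemma (Lemma~\ref{centroid}) to pin down where along that quasiruler $h$ sits, and the quantitative shadow estimates (Proposition~\ref{diamofshadows}, Corollary~\ref{rhoofshadows}); the constant $R$ was chosen precisely (in Lemma~\ref{busemannapprox}) so that this localization has width absorbed into $O(1)$. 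Once the geometric localization is in place, the counting is a one-line application of Ahlfors regularity and bounded overlap, so the estimate $\#X_a(g,n) \le e^{\delta_\Gamma a + O(1)}$ follows, with the implied constant depending only on $(\Gamma,d)$ and not on $n$, $r$, $a$, or $g$.
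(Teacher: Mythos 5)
Your overall plan is the right one, and your second ``alternative'' --- centroid via the tripod lemma, then count with Lemma~\ref{sizeofballs} --- is exactly the paper's argument. But both of your two routes contain a specific bookkeeping error.

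In the centroid route you write that thinness ``confines $h$ to a bounded neighborhood of a single point,'' and that the count is ``the size of a ball of radius $O(1)$.'' That is not what the geometry gives. What is true, and what the paper proves, is: the centroid $y=y(h)$ of the quasitriangle $(e,g,h)$ sits at distance $n-a+O(1)$ from $e$ along the quasiruler $[e,g]$, so it is independent of $h$ up to a bounded error --- this part of your intuition is correct. But $h$ itself sits at distance $a+O(1)$ (not $O(1)$) from $y$, since $d(e,h)=n+O(1)$ and $y$ lies at depth $n-a+O(1)$ on a quasiruler $[e,h]$. Hence $X_a(g,n)\subseteq B(y,a+O(1))$, and only then does Lemma~\ref{sizeofballs} give $\#X_a(g,n)\le e^{\delta_\Gamma a+O(1)}$. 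You land on the right number $e^{\delta_\Gamma a+O(1)}$, but the ``radius $O(1)$'' step as written would instead give an $O(1)$ count; the variable radius $a$ is precisely what produces the $\delta_\Gamma a$ in the exponent.

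In the shadow route the exponent is wrong. You put the measure of the common containing region at $e^{-\delta_\Gamma(r-(n-2a))+O(1)}$ and then equate this with $e^{-\delta_\Gamma a+O(1)}$, which would require $r=n-a$; but in the lemma's setting $r>n+\Delta$, so this is impossible for $a>0$. The quantity $r-n+2a$ is $d(g,h)+O(1)$, i.e.\ it governs the shadow of $h$ viewed from $g$, not a region of $\partial\Gamma$ viewed from $e$. The correct containing region is the shadow of the common centroid, $O_C(e,y)$, which by Corollary~\ref{rhoofshadows} has $\rho$-measure $e^{-\delta_\Gamma(n-a)+O(1)}$; together with $\rho(O(h))=e^{-\delta_\Gamma n+O(1)}$ for $h\in S_n$ and the bounded-overlap Lemma~\ref{coveringshadows} this yields $\#X_a(g,n)\cdot e^{-\delta_\Gamma n}\le L\,e^{-\delta_\Gamma(n-a)}$, hence the claimed bound. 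So the shadow route is salvageable but with the centroid-depth $n-a$, not $r-n+2a$, in the exponent. With these two corrections your proposal matches the paper's proof (which uses the centroid-plus-ball count); the shadow variant is a legitimate alternative once fixed.
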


\begin{proof}
Let $g\in S_r$. Fix a quasiruler between $e,g$. Given $h\in X_a(g,n)$ complete it to the quasitriangle $e,g,h$. By Lemma~\ref{centroid} it is $(1,c_0)$-quasiisometric to a tripod (with $c_0$ depending only on the global quasiruled hyperbolic structure). Hence, the following equations carry on to the tripod via the quaiisometry
\begin{align*} 
	d(e,g) &= r+O(1) \\ 
	d(e,h) &= n+O(1) \\
	d(g,e) - d(g,h)	 &= n-2a+O(1)
\end{align*} 

Let $y(h)\in \Gamma$ be some preimage of the closest point to the centroid of the tripod(if it is not unique, we can choose one). Solving in the tripod it is easy to see that $d(y(h),e)=n-a+O(1)$ for every $h\in X_a(g,n)$. This will ensure that the location of the centroid $y=y(h)$ doesn't depend on which quasitriangle we chose (up to bounded distance), i.e. it doesn't depend on $h\in X_a(g,n)$. Also, $d(y,h)=a+O(1)$ for every $h\in X_a(g,n)$, hence implying that $X_a(g,n)\subseteq B(y,a+O(1))$. By Lemma~\ref{sizeofballs} we can estimate
\[ \# X_a(g,n) \leq \# B(y,a+O(1)) \leq  e^{\delta_\Gamma a +O(1)}  \]

\qedhere

\end{proof}

\begin{lemma}\label{rhoshadows}
With $n,r$ as above, enumerate the elements of $S_r=\{ g_1,g_2,...,g_{|S_r|}\}$. Then for any $i\in \{1,...,\#S_r\}$ and for any $h \in S_n$ we have 
\begin{equation}
\sum_j^{\#S_r}   \rho(O_i \cap hO_j) \leq  e^{-\delta_\Gamma r+\log n +O(1)} \label{eq3}
\end{equation}
\end{lemma}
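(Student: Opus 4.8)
I want to bound $\sum_j \rho(O_i \cap hO_j)$ uniformly over $i$ and over $h \in S_n$. The key geometric input is the following dichotomy for each $j$: either $O_i \cap hO_j$ is empty (contributing nothing), or $hO_j$ is "close" to $O_i$ in the sense that $hg_j$ lands near a specific region of $\Gamma$ determined by $i$ and $h$. Recall that $hO_j = h\,O_C(e,g_j) = O_{C'}(h, hg_j)$ is (up to bounded error) the shadow cast from $h$ by the point $hg_j$. For this shadow to meet $O_i = O_C(e,g_i)$, hyperbolicity forces $hg_j$ to lie within bounded distance of a quasigeodesic ray from $e$ through $g_i$ to the boundary point it shadows — more precisely, $hg_j$ must be in a bounded neighborhood of the quasigeodesic segment $[h, g_i]$ extended, and at roughly the right "depth." I will make this precise using the tripod lemma (Lemma~\ref{centroid}) applied to the triangle $e, h, hg_j$ together with the triangle $e, g_i, hg_j$.

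\textbf{Key steps.} First, I fix $i$ and $h\in S_n$ and partition the relevant indices $j$ according to $-\beta_{g_j}(h^{-1}, e)$ (equivalently the Gromov product $(e \mid hg_j)_h$); this is the natural "matching depth" parameter. Using Corollary~\ref{rndbound} and the transformation rule $g_* \rho$, I estimate $\rho(hO_j) = \rho(O_{g_j}) \cdot e^{-\delta_\Gamma \beta_{g_j}(h,e) + O(1)}$, and since $\rho(O_{g_j}) = e^{-\delta_\Gamma r + O(1)}$ by Corollary~\ref{rhoofshadows}, I get $\rho(O_i \cap hO_j) \le \rho(hO_j) = e^{-\delta_\Gamma r - \delta_\Gamma \beta_{g_j}(h,e) + O(1)}$. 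Wait — I need $\beta_{g_j}(h,e) \ge 0$ roughly, i.e. the shadows that are deep have small measure, which is the point. Second, I bound the \emph{number} of $j\in S_r$ for which $O_i \cap hO_j \ne \emptyset$ and for which the depth parameter lies in a given dyadic-type window $[a, a+R)$: this is exactly an $X_a$-type count. By the same tripod analysis as in Lemma~\ref{sizeofxa} (now for the triangle $e, h, hg_j$, or equivalently translating back by $h^{-1}$: the triangle $h^{-1}, e, g_j$), the set of such $g_j$ lies in a ball of radius $a + O(1)$, so there are at most $e^{\delta_\Gamma a + O(1)}$ of them — but I also need that these $g_j$ are genuinely in $S_r$, so the geometry must be compatible, forcing $r - n \lesssim$ (depth), and the total range of the depth parameter is $O(n)$, which is where the extra $\log n$ factor will come from when I account for the $O(n)$ many windows. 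Third, I sum: for each window indexed by $a$ (ranging over $O(n)$ values), I have at most $e^{\delta_\Gamma a + O(1)}$ terms each of size at most $e^{-\delta_\Gamma r - \delta_\Gamma a + O(1)}$ (the depth being comparable to $a$ up to the shift $R$), giving $e^{-\delta_\Gamma r + O(1)}$ per window; multiplying by the $O(n)$ windows yields $n \cdot e^{-\delta_\Gamma r + O(1)} = e^{-\delta_\Gamma r + \log n + O(1)}$, as claimed.

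\textbf{Main obstacle.} The delicate point is the bookkeeping that ties together the three quantities $r$ (where $g_j$ lives), $n$ (the size of $h$), and the matching depth $a$: I must show that $O_i \cap hO_j \ne \emptyset$ genuinely constrains $hg_j$ to a thin region, and that the measure of $hO_j$ decays like $e^{-\delta_\Gamma(r + a)}$ while the count in window $a$ grows only like $e^{\delta_\Gamma a}$ — the two exponentials in $a$ must cancel so that the only surviving loss is the $O(n)$ count of windows. This is precisely the interplay exploited in Lemmas~\ref{sizeofxa} and~\ref{busemannapprox}, and the proof will amount to applying the tripod lemma to the configuration $\{e, h, hg_j\}$ (or its $h^{-1}$-translate) and reading off, from the tripod, that (a) $d(hg_j, [\text{ray through } g_i]) = O(1)$ when the shadows meet, and (b) the centroid of that tripod has distance $n - a + O(1)$ from $e$, so the $g_j$ in question fill a ball of radius $a + O(1)$. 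I expect step two — the uniform count $\#\{j : O_i \cap hO_j \ne \emptyset, \text{ depth } \in [a,a+R)\} \le e^{\delta_\Gamma a + O(1)}$ with constants independent of $i, h, n$ — to require the most care, as it is where hyperbolicity (thinness of quasitriangles, the tripod approximation) does the real work.
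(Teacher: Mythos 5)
Your plan correctly isolates the central trade-off of the argument, and for the ``far'' regime it matches what the paper does. But as written it only works in one of the two regimes that occur, and the other regime---which you yourself flag with the parenthetical ``Wait --- I need $\beta_{g_j}(h,e)\geq 0$''---is left genuinely unresolved, and your proposed formulas are incorrect there.

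Here is the precise issue. Once $O_i\cap hO_j\neq\emptyset$, hyperbolicity forces $hg_j$ to lie near a quasiruler ray from $e$ through $g_i$. There are then two cases according to whether $hg_j$ comes \emph{before} or \emph{after} $g_i$ on that ray, i.e.\ whether $d(hg_j,e)\leq d(g_i,e)=r+O(1)$ or $d(hg_j,e)\geq r+O(1)$. In the second (``far'') case your bookkeeping is exactly right: writing $a=d(hg_j,e)-r\geq 0$, the tripod/Lemma~\ref{sizeofxa}-type count gives at most $e^{\delta_\Gamma a+O(1)}$ admissible $g_j$ in each window, and $\rho(O_i\cap hO_j)\leq\rho(hO_j)=e^{-\delta_\Gamma(r+a)+O(1)}$ by the lemma of the shadow applied to $hO_j=O_C(h,hg_j)\approx O_{C'}(e,hg_j)$; the exponentials cancel and the $O(n)$ windows give the $\log n$. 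This is precisely the paper's Case~2. In the first (``near'') case, however, both of your estimates fail: $\rho(hO_j)$ is \emph{larger} than $\rho(O_i)$ (the Radon--Nikodym factor $e^{-\delta_\Gamma\beta_{g_j}(h^{-1},e)}$ now works against you, and $hO_j$ essentially contains $O_i$), so $\rho(O_i\cap hO_j)\leq\rho(hO_j)$ is not a useful bound --- you must switch to $\rho(O_i\cap hO_j)\leq\rho(O_i)=e^{-\delta_\Gamma r+O(1)}$. Correspondingly, the count here is \emph{not} $e^{\delta_\Gamma a}$: since $hg_j$ is constrained to lie within bounded distance of the quasiruler segment $[e,g_i]$, and at distance at most $n+O(1)$ from $g_i$, the total number of admissible $g_j$ in this regime is only $O(n)$ (bounded multiplicity per unit of arclength along the segment), not exponentially many. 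This is exactly the count the paper carries out in its Case~1, and it is what delivers the $\log n$ there; your ``ball of radius $a+O(1)$'' heuristic does not apply when $hg_j$ is pinned to a thin tube rather than filling a shadow cone. The two cases then add to $e^{-\delta_\Gamma r+\log n+O(1)}$ as required.

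There is also a sign/definition inconsistency in your depth parameter ($\beta_{g_j}(h,e)$ versus $\beta_{g_j}(h^{-1},e)$ versus the Gromov product $(e\mid hg_j)_h$, which differs from $-\beta_{g_j}(h^{-1},e)$ by an additive shift of roughly $n/2$ and a factor $\frac{1}{2}$); fixing one convention would make the two-regime dichotomy transparent. In short: your Case~2 is correct and is the paper's argument, but the proof must be split, with a separate $O(n)$ tube-count and the bound $\rho(O_i)$ used in the complementary regime, and the proposal as stated does not do this.
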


\begin{proof}
Given $h \in S_n, g_i \in S_r$, we first count $g_j\in S_r$ for which $O_i \cap h O_j \neq \emptyset$ and then we can estimate the measures of the intersections.
\par 
Let $g_j \in S_r$ such. There are two different cases:

\textbf{Case 1}: $d(hg_j,e) \leq d(g_i,e)$. 
\par 
In order for the intersection to be nontrivial, $hg_j$ must lie within distance $2C+\Delta$ from a quasiruler segment $[e , g_i]$. Also, since $h\in S_n$, $hg_j$ must be at most within distance $n+O(1)$ from $g_i$. 

The number of elements in $\Gamma$ lying in a bounded distance from some quasiruler $[e,g_i]$ and being distance at most $n+O(1)$ from $g_i$ is at most $O(n)$. Thus there are at most $O(n)$ possible $g_j$ satisfying $O_i \cap h O_j \neq \emptyset$. In this case $hO_j\cap O_i \subseteq O_i$. The contribution of $\rho(hO_j\cap O_i) = \rho(O_i)$ for each such $j$ to the sum in the equation~\eqref{eq3} is $e^{-\delta_\Gamma r +O(1)}$ by Lemma~\ref{rhoofshadows}, hence the total contribution is at most $e^{-\delta_\Gamma r+\log n +O(1) }$.

\textbf{Case 2}:  $d(hg_j,e) \geq d(g_i,e)$. 
\par 

In this case $hg_j$ should lie in the shadow of $g_i$. Also, $hg_j$ can be at distance at most $n+O(1)$ from $g_i$. In this situation we have $O_i \cap hO_j \subset hO_j$, and in particular $\rho(O_i \cap hO_j) \leq \rho(hO_j) e^{-\delta_\Gamma r - d(hg_j,g_i)+O(1)}$  

For each integer $0\leq b \leq n+O(1)$, there are at most $e^{\delta_\Gamma b+O(1)}$ possible elements $g_j$ for which we can have $ b-1\leq d(hg_j,g_i) \leq b$, and for each such $g_j$ we have $\rho(hO_j) = e^{-\delta_\Gamma r - \delta_\Gamma b +O(1)}$. Thus for fixed $b$ the total contribution of those elements to the sum in~\eqref{eq3} is at most $e^{-\delta_\Gamma r +O(1)}$, and summing over $0\leq b\leq n+O(1)$ we get the contribution of $e^{-\delta_\Gamma r +\log n +O(1)}$

Adding both cases yields the desired estimate.
\end{proof}

\subsection{Estimating the operator norms of $\| \Pi_r(\mu_n)\|$} \label{matrixnormsec}\hfill{}\par
In this section we prove Theorem~\ref{matrixnorm}. We will use an old fact known as Gershgorin circle theorem. It states that the spectral radius of a matrix is bounded by the maximum of the $\ell_1$-norms of the columns.

\begin{proof} (of Theorem~\ref{matrixnorm})
By Gershgorin circle theorem it is sufficient to show that sum of every column in $\Pi_r(\mu_n)$ is bounded by $ e^{-\delta_\Gamma r -\half \delta_\Gamma n +\log n +O(1) }$. 

Recall, in~\eqref{xadef} we defined
\begin{equation*}
X_a(g_i,n) = \setdef { h \in S_n }{ n-2a-R \leq -\beta_{g_i}(h,e) \leq  n-2a } 
\end{equation*}
Note that for any fixed $i$ we have
\[ 
S_n = \bigcup_{a=0}^{n} X_a(g_i,n)
\]
We now evaluate the sum of $i-$th column

\begin{align*}
\sum_{j=1}^{\#S_r} \left< \pi_{\partial \Gamma}(\mu_n) \chi_j, \chi_i \right> &= \sum_{h\in S_n} \mu_n(h) \sum_{j=1}^{\#S_r} \left< \pi_{\partial \Gamma}(h) \chi_j, \chi_i \right> \leq 
\end{align*}
\begin{align}
&\leq \sum_{a=0}^{n} \sum_{h \in X_a(g_i,n)} \mu_n(h) \sum_{j=1}^{\#S_r} \int_{\partial \Gamma} \sqrt{\frac{dh_* \rho}{d\rho}(\xi)} \chi_j(h^{-1}\xi)\chi_i(\xi) d\rho(\xi) \label{eq1}
\end{align}
where 
$\mu_n$ is uniformly distributed on $S_n$, hence by Lemma~\ref{sizeofballs} $\mu_n(h) = e^{-\delta_\Gamma n +O(1)}$. Using Corollary~\ref{rndbound} for the Radon Nykodim derivative we continue~\ref{eq1}
\begin{align}
&\leq O(1) \sum_{a=0}^{n} e^{-\delta_\Gamma n} e^{\half \delta_\Gamma (n-2a)} \sum_{h \in X_a(g_i,n)} \sum_{j=1}^{\#S_r}  \rho(O_i \cap hO_j) \leq \label{eq2}
\end{align}
We use the upper bound for the innermost sum from Lemma~\ref{rhoshadows}, and the size of $X_a(g_i,n)$ from Lemma~\ref{sizeofxa}. Hence , continuing~\eqref{eq2}

\begin{align*}
&\leq O(1) \sum_{a=0}^{n} e^{-\delta_\Gamma n} e^{\half \delta_\Gamma  (n-2a)} e^{\delta_\Gamma a } e^{-\delta_\Gamma r + \log n }
\end{align*}
gathering terms and summing over $a$ we get
\begin{align*}
&\leq  e^{-\half \delta_\Gamma n +2 \log n - \delta_\Gamma r +O(1)}  
\end{align*}
\qedhere
\end{proof}

\section{Diophantine Approximation on the 2-Torus}\label{approximation}

We first prove Theorem~\ref{maintheoremB}. Then we show how to deduce Theorem~\ref{maintheorem} from~\ref{maintheoremB}. 

\subsection{Toral Diophantine approximation for convex cocompact subgroups of $SL_2(\mathbb{Z})$}\hfill{}\par
Consider the natural $SL_2(\mathbb{Z})$ action on the torus $\mathbb{T}^2$, with the Lebesgue measure $m$. Fix a monotonic family $\{\Targ_r\}_{r>0}$ of Lebesgue subsets of measure $m(\Targ_r)=\pi r^2$.  After choosing a basepoint $x_0 \in \mathbf{H}^2$, we get a metric on $\Gamma$ defined by $d(g,h):= d_{\mathbf{H}^2}(g.x_0, h.x_0)$. In this section we prove Theorem~\ref{maintheoremB}

\begin{proof}(of Theorem~\ref{maintheoremB})
The first statement follows from the first Borel Cantelli lemma. Indeed, 
\begin{eqnarray*}
	\sum_{g\in \Gamma} m(g^{-1}\Targ_{\psi(\|g\|)}) &\le& \sum_{n=1}^\infty \sum_{\{g \in \Gamma : e^{n-1} < \|g\|\leq e^n\}} \pi \psi(\|g\|)^2   \\
	&\leq& O(1) \sum_{n=1}^\infty e^{2\delta n} \cdot \psi(e^{n-1})^2 \\
	&=& O(1) \sum_{n=1}^\infty e^{2\delta n} \cdot \psi(e^n)^2  <+\infty
\end{eqnarray*}
The last inequality follows from Lemma~\ref{sizeofballs} giving the upper bound of the cardinality of balls in convex cocompact groups, and the fact that $d(g,e)= 2 \log \|g\|$. The series $\sum_{n=1}^\infty e^{2\delta n} \cdot \psi(e^n)^2$ converges if and only if $\psi$ is as in (1)(by Cauchy condensation test). Therefore, $m$-a.e. $x\in \bbT^2$ belongs to at most finitely many of the sets $g^{-1} \Targ_{\psi(\|g\|)}$, as claimed.
\medskip 

The main point is the second statement. 
Let $\pi$ be the Koopman $\Gamma$-representation on $L^2(\mathbb{T}^2,m)$, and $\pi_0$ the restriction to $L^2_0(\mathbb{T}^2,m)$. Let $\mu_n$ be a sequence of probability measures on $\Gamma$, as given in Theorem~\ref{specradthm}. Observe that 
\[
\max \setdef { \|g\| }{ g\in supp(\mu_{2n}) } \leq e^n
\]

We denote 
\[ C_n:= \Targ_{\psi(e^n)} , \qquad E_n=X\setminus \bigcup_{g\in\Gamma, \|g\|\leq e^n} g^{-1} \Targ_{\psi(e^n)}.\]
$C_n$ represents the targets that we are supposed to hit by applying matrices $g$ with $\|g\| \leq e^n$(or equivalently $d(g,e)\leq 2n$). A point belongs to $E_n$ if and only if none of it's translates by $g$ with $\|g\| \leq e^n$ hits the target $C_n$. Hence, we want to show that
\[
	m(E)=0 \qquad\textrm{where}\qquad E=\limsup_{n\to\infty} E_n.
\]
The projections of characteristic functions of $C_n$ and $E_n$ to $L^2_0(X,m)$ are
\[
	h_n=1_{C_n}-m(C_n),\qquad f_n=1_{E_n}-m(E_n) 
\]
Note that
\[
\| h_n\|_2^2 \leq (1-m(C_n))^2m(C_n) \leq m(C_n)
\]
Thus,
\[
	\| h_n\|_2 \leq m(C_n)^{\frac{1}{2}} = O(1) \psi(e^n).
\]
Similarly,
\[ \|f_n\|_2 \leq m(E_n)^{\frac{1}{2}} \]
For any $g\in \Gamma$   
\[
	\langle\pi_0(g)h_n,f_n\rangle=m(C_n)\cdot m(E_n)-m(g^{-1}C_n\cap E_n).
\]
Since any $g \in supp(\mu_{2n})$ satisfies $\|g\| \leq e^n$, one has $g^{-1}C_n\cap E_n=\emptyset$ and 
\[
	\langle\pi_0(g)h_n,f_n\rangle=m(C_n)\cdot m(E_n)
\] 
and consequently 
\begin{eqnarray*}
	m(C_n)\cdot m(E_n) &=&\langle \pi_0(\mu_{2n})h_n,f_n\rangle 
	\leq \|\pi_0(\mu_{2n})\| \cdot\|h_n\|_2\cdot\|f_n\|_2\\
	&\le& \|\pi_0(\mu_{2n})\| \cdot m(C_n)^{\frac{1}{2}}\cdot m(E_n)^{\frac{1}{2}}.
\end{eqnarray*}

By Corollary~\ref{koopmanbdry} and Theorem~\ref{specradthm} we have 
\begin{equation*}
\| \pi_0(\mu_{2n}) \| \leq  e^{- \delta_\Gamma n+ 2 \log n +O(1) }
\end{equation*}
Therefore
\[
	m(E_n)^{\frac{1}{2}}\leq \|\pi_0(\mu_{2n})\| \cdot m(C_n)^{-\frac{1}{2}}
	\leq e^{-\delta_\Gamma n+ 2 \log n +O(1)}\cdot \psi(e^n)^{-1} \]
Hence,
\begin{equation*}
	\sum_{n=1}^\infty m(E_n)\leq O(1)\sum_{n=1}^\infty n^4 e^{-2\delta_\Gamma n} \cdot \psi(e^n)^{-2} <+\infty.
\end{equation*}
where the convergence of the above series is equivalent to convergence of $\sum_{n=1}^\infty (\log n)^4 n^{-2 \delta_\Gamma -1}\psi(n)^{-2}$(by Cauchy condensation test). Consequently, $m(\limsup E_n)=0$. 
\qedhere
\end{proof}

\begin{remark}\label{strongerapproxexplain} In fact, the statement we proved here is a bit stronger than the one that appears in the theorem. We showed that for Lebesgue a.e. point in the torus $x\notin \limsup E_n$, which means that for some large $N$, 
$x\in E^c_n$ for every $n>N$. In other words, not only we have infinitely many solutions for the problem $g.x\in \Targ_{\psi(\|g\|)}$, but for any $n>N$, we have such a solution $g\in \Gamma$ with $e^{n-k}\leq \|g\| \leq e^{n}$, for some fixed $k$. This justifies Remark~\ref{strongerapprox}. 
\end{remark}

\begin{remark}
	One might formulate a simultaneous approximation problem. Given a $d-$tuple of monotonic target families $\{\Targ_r^1, ..., \Targ_r^d\}$ as before and $x_1,...,x_d \in \mathbb{T}^2$, can one find infinitely many $g\in \Gamma$ with $g.x_i \in \Targ_{\psi(\|g\|)}^i$ for each $1\leq i \leq d$? We remark that if one had sharp spectral estimates for $\| \pi_0^{\otimes d}(\mu_n)\|$, a proof similar to Theorem~\ref{maintheoremB} would provide the rates for which the approximation is possible for a.e. $d-$tuple $(x_1,...,x_d)$. 
\end{remark}

\subsection{Reduction to the convex cocompact space}\hfill{}\par
In this section we prove Theorem~\ref{maintheorem}. The proof of the first statement is the exact replica of the proof of Theorem~\ref{maintheoremB}. Note, that since the group is not convex cocompact, we cannot use Lemma~\ref{sizeofballs} for the precise asymptotics of the growth of balls. However, it is sufficient for the proof to bound the cardinality of the balls of radius $n$ in the group by $ e^{(\delta_\Gamma + \epsilon)n+O(1)}$ for arbitrarily small $\epsilon$, and this is possible from the definition of the critical exponent. 
\medskip

We now show that the second part follows from Theorem~\ref{maintheoremB}. Let $\Gamma< \SL_2(\bbZ)$ arbitrary subgroup. Let $\epsilon >0$. We want to show that there are infinitely many solutions $g\in \Gamma$ to $g.x \in \Targ_{\psi(\|g\|)}$ with $\psi(R)=R^{-\delta_\Gamma+\epsilon}$.  
For $\delta_\Gamma=0$  it is trivial, 
so we will assume that $\Gamma$ is nonelementary.
\medskip

The goal is to construct a convex cocompact subgroup $\Gamma_\epsilon<\Gamma$, so that the $\delta_{\Gamma_\epsilon}>\delta_\Gamma-\epsilon$. Since for large $R$ we have $\psi(R)=R^{-\delta_\Gamma+\epsilon}>  R^{-\delta_{\Gamma_\epsilon}}\log^{2.5 + \epsilon}R$, we can apply Theorem~\ref{maintheoremB} to find infinitely many solutions $g\in \Gamma_\epsilon<\Gamma$ to $g.x \in \Targ_{\psi(\|g\|)}$. This proves Theorem~\ref{maintheorem}.
\medskip 

We are left to describe the construction of $\Gamma_\epsilon$. We are inspired by the example provided by Bourgain and Kontorovich in \cite{BourgainKontorovich}(which they attribute to Sarnak). The following trick gives us a way to get rid of parabolic elements in the group.  

\begin{lemma}\label{noparabolics}(\cite{BourgainKontorovich} Remark 1.7, also follows from \cite{Dalbo} Property 3.14) 	
	Let $G=SL_2(\mathbb{Z})$. Let $G(2)=Ker \{ G \to SL_2(\mathbb{Z}/2\mathbb{Z}) \}$ be the congruence subgroup of $G$. Then the commutator subgroup $G(2)'=[G(2),G(2)]$ does not have parabolic elements. 
\end{lemma}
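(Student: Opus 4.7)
The plan is to exploit that $G(2)$ is virtually a free group of rank two, and then detect parabolic elements via the abelianization.

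First, I would recall the classical uniformization of the thrice-punctured sphere: $G(2)/\{\pm I\}$ is the free group of rank two, freely generated by the parabolic matrices
\[ A = \begin{pmatrix} 1 & 2 \\ 0 & 1\end{pmatrix}, \qquad B = \begin{pmatrix} 1 & 0 \\ 2 & 1\end{pmatrix}. \]
Since $-I$ is central and the quotient $F_2$ is free, the central extension $1 \to \{\pm I\} \to G(2) \to F_2 \to 1$ splits, giving an internal direct product decomposition $G(2) \cong \{\pm I\} \times \langle A, B\rangle$. Consequently $[G(2), G(2)] = [\langle A, B\rangle, \langle A, B\rangle]$, and the abelianization is $G(2)^{ab} \cong \bbZ/2 \oplus \bbZ^2$, with $A$ and $B$ giving the standard basis of the $\bbZ^2$-factor.

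Next, I would enumerate the parabolic conjugacy classes in $G(2)$. The three cusps of the quotient surface, at $0, 1, \infty$, correspond to three conjugacy classes of maximal parabolic subgroups; these are generated (up to sign) by $A$, $B$, and $AB^{-1}$ respectively. The first two are immediate; for the third, a direct computation gives $AB^{-1} = \begin{pmatrix} -3 & 2 \\ -2 & 1\end{pmatrix}$, with trace $-2$ and unique fixed point $1 \in \partial \mathbf{H}^2$. Hence every parabolic element of $G(2)$ is conjugate to $\pm A^k$, $\pm B^k$, or $\pm(AB^{-1})^k$ for some $k \neq 0$. Projecting these representatives to the $\bbZ^2$-factor of $G(2)^{ab}$ yields $(k, 0)$, $(0, k)$, and $(k, -k)$, all non-zero. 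Since the abelianization image is constant on conjugacy classes, no parabolic element of $G(2)$ has trivial image in $\bbZ^2$, and therefore none lies in $[G(2), G(2)]$.

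The only place requiring a moment's care is the identification of the peripheral generator at the cusp $1$: the first natural guess $AB$ is in fact hyperbolic (one computes $\mathrm{tr}(AB) = 6$), and one must instead take $AB^{-1}$. Once this is pinned down, the proof reduces to bookkeeping in the abelianization of the free-times-cyclic group $\bbZ/2 \times F_2$.
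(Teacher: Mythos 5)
Your argument is correct and self-contained. Note, however, that the paper offers no proof of this lemma at all: it is stated as a known fact with two citations (Bourgain--Kontorovich, Remark~1.7, and Dalb\'o, Property~3.14), so there is no in-paper proof to compare against. Your route --- split the central extension to get $\Gamma(2)\cong\{\pm I\}\times\langle A,B\rangle$ with $A=\left(\begin{smallmatrix}1&2\\0&1\end{smallmatrix}\right)$, $B=\left(\begin{smallmatrix}1&0\\2&1\end{smallmatrix}\right)$, observe that the three cusp classes of $\Gamma(2)$ (at $\infty$, $0$, $1$) have peripheral generators $A$, $B$, $AB^{-1}$ up to sign and conjugacy, and then check that each has nonzero image $(1,0)$, $(0,1)$, $(1,-1)$ in the $\bbZ^2$-factor of $\Gamma(2)^{\mathrm{ab}}$ --- is the standard one and is almost certainly what the cited sources do. All the computational checks you flag ($AB^{-1}$ has trace $-2$ and fixes $1$; $AB$ has trace $6$ and is hyperbolic; $-I\notin\langle A,B\rangle$ since the Sanov subgroup is free, hence torsion-free) are right. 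The one step you could make more explicit is why $\langle A,B\rangle$ is an actual complement to $\{\pm I\}$ rather than merely some abstract splitting: any section $s\colon F_2\to\Gamma(2)$ of the quotient map must send the free generators $\bar A,\bar B$ to $\pm A,\pm B$, and choosing the $+$ signs gives $s(F_2)=\langle A,B\rangle$; combined with torsion-freeness of $\langle A,B\rangle$ this pins down the internal direct product.
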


\begin{prop}
	Given $\Gamma<SL_2(\mathbb{Z})$ and $\epsilon>0$ there exists a convex cocompact subgroup $\Gamma_\epsilon < \Gamma$, with $\delta_{\Gamma_\epsilon}> \delta_\Gamma - \epsilon$
\end{prop}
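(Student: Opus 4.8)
The plan is to build $\Gamma_\epsilon$ in two stages. First I would pass to a subgroup of $\Gamma$ that is free and purged of parabolics, and then inside that free group I would extract a convex cocompact (Schottky) subgroup whose critical exponent is as close to $\delta_\Gamma$ as desired. The first stage uses Lemma~\ref{noparabolics}: set $\Lambda = \Gamma \cap G(2)'$, where $G = \SL_2(\bbZ)$ and $G(2)'$ is the commutator subgroup of the principal congruence subgroup of level $2$. Since $G(2)'$ is a finite-index subgroup of $\SL_2(\bbZ)$ (as $\SL_2(\bbZ/2\bbZ)\cong S_3$ is finite and its commutator subgroup has index $3$, hence $G(2)'$ has finite index, being the commutator of a finite-index subgroup of a finitely generated group), $\Lambda$ has finite index in $\Gamma$, so $\delta_\Lambda = \delta_\Gamma$. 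Moreover $\Lambda$ contains no parabolic elements by Lemma~\ref{noparabolics}, and being a subgroup of a free group (any torsion-free finite-index subgroup of $\SL_2(\bbZ)$ is free) it is itself free; being nonelementary since $\delta_\Lambda = \delta_\Gamma > 0$, every nontrivial element of $\Lambda$ is hyperbolic.

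The second stage is the standard fact that a nonelementary subgroup $\Lambda < \Isom(\bbH^2)$ all of whose elements are hyperbolic contains convex cocompact subgroups of critical exponent arbitrarily close to $\delta_\Lambda$. I would invoke (or sketch) the following: $\Lambda$ acts on $\bbH^2$ with a limit set $L_\Lambda$ of Hausdorff dimension $\delta_\Lambda$, and one can approximate $\delta_\Lambda$ from below by the critical exponents of finitely generated Schottky subgroups. Concretely, pick $N$ large and consider the free subgroup $\Lambda_N$ generated by $N$ ``independent'' hyperbolic elements of $\Lambda$ with sufficiently long translation lengths and well-separated axes endpoints; a ping-pong argument shows $\Lambda_N$ is free, convex cocompact (Schottky), hence quasiconvex and discrete, and its action on $\bbH^2$ is properly cocompact on its convex core. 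A counting argument — realising lots of elements of $\Lambda$ of bounded displacement as ``prefixes'' of reduced words in the Schottky generators, or alternatively using that the Poincaré series of $\Lambda$ diverges at $\delta_\Lambda$ and approximating by finite sub-sums — shows that $\delta_{\Lambda_N} \to \delta_\Lambda = \delta_\Gamma$ as $N\to\infty$ and the separation parameters grow. Choosing $N$ and the parameters so that $\delta_{\Lambda_N} > \delta_\Gamma - \epsilon$, and setting $\Gamma_\epsilon = \Lambda_N$, completes the construction.

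Finally I would verify that $\Gamma_\epsilon$ genuinely satisfies the hypotheses of Theorem~\ref{maintheoremB}: it is a subgroup of $\SL_2(\bbZ)$ (being inside $\Gamma$), it acts convex cocompactly on $\bbH^2$ (by the Schottky property), and $\delta_{\Gamma_\epsilon} > \delta_\Gamma - \epsilon$ as arranged. Then, as the surrounding text indicates, for $R$ large one has $R^{-\delta_\Gamma + \epsilon} > R^{-\delta_{\Gamma_\epsilon}}\log^{2.5+\epsilon} R$, so $\psi(R) = R^{-\delta_\Gamma + \epsilon}$ falls into the regime of Theorem~\ref{maintheoremB}(2) for the group $\Gamma_\epsilon$, giving infinitely many solutions $g\in\Gamma_\epsilon \subset \Gamma$.

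I expect the main obstacle to be the quantitative control in the second stage: showing that Schottky subgroups of $\Lambda$ can be made to have critical exponent arbitrarily close to $\delta_\Lambda$. This is where the absence of parabolics is essential — it guarantees that $\Lambda$ acts properly discontinuously with a ``thick'' enough part that the approximation works, and it rules out the pathology of a group whose critical exponent is concentrated on parabolic cusps that no Schottky subgroup can see. The cleanest route is probably to cite a known result on approximation of the critical exponent by convex cocompact subgroups for groups acting on pinched negatively curved spaces (this is classical for $\bbH^2$), but if a self-contained argument is wanted, the ping-pong construction together with a Poincaré-series comparison is the way to go, and the bookkeeping of the ping-pong neighbourhoods versus the displacement counting is the delicate part.
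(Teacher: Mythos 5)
The first stage of your construction has a fatal error: $G(2)'$ does \emph{not} have finite index in $\SL_2(\bbZ)$. You argue that ``the commutator of a finite-index subgroup of a finitely generated group'' is of finite index, but that is false — $G(2)$ is (up to its center $\{\pm I\}$) a free group of rank $2$, so $G(2)^{\mathrm{ab}} \cong \bbZ/2 \times \bbZ^2$ is infinite and $G(2)'$ has \emph{infinite} index. (You also seem to be conflating the commutator subgroup of the finite quotient $\SL_2(\bbZ/2\bbZ)\cong S_3$ with the commutator subgroup of $G(2)$ itself; these are unrelated.) Consequently $\Lambda = \Gamma \cap G(2)'$ need not have finite index in $\Gamma$, and there is no immediate reason that $\delta_\Lambda = \delta_\Gamma$. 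This is precisely the difficulty the paper's proof is designed to handle: after first passing (via Sullivan) to a finitely generated, hence geometrically finite, subgroup $\Gamma_0 < \Gamma$ with $\delta_{\Gamma_0} > \delta_\Gamma - \epsilon/2$, the paper invokes Stadlbauer's theorem — for an essentially free geometrically finite group $G$ and normal $N \unlhd G$, one has $\delta_N = \delta_G$ iff $G/N$ is amenable — applied to the congruence kernel (finite quotient) and then to the commutator subgroup (abelian quotient). That amenability criterion is exactly what substitutes for the finite-index argument you were hoping for, and it requires the geometric finiteness of $\Gamma_0$, which is why one cannot start directly from the arbitrary $\Gamma$.

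Your second stage is also heavier than needed. You propose to build Schottky subgroups by ping-pong and then run a Poincaré-series comparison to show $\delta_{\Lambda_N}\to\delta_\Lambda$; but once you are inside a finitely generated, parabolic-free Fuchsian group, Sullivan's theorem already gives a finitely generated subgroup with critical exponent arbitrarily close to that of the ambient group, and a finitely generated Fuchsian group with no parabolics \emph{is} convex cocompact. That is exactly the last step of the paper's proof. Your ping-pong route is not wrong in principle, but as you yourself note the bookkeeping of separation parameters vs.\ displacement counting is delicate, and it re-derives a classical result that is easier to cite. In short: stage 2 is a matter of taste once stage 1 is fixed, but stage 1 as written is incorrect, and fixing it requires both the reduction to a finitely generated subgroup and an amenable-quotient criterion for preserving the critical exponent.
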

\begin{proof}
	By Sullivan(\cite{Sullivan}, Corollary 6) we know that 
	\[ \delta_{\Gamma} = \sup \setdef{\delta_H} { H < \Gamma \text{ finitely generated}}  \]
	Hence, we can find $\Gamma_0< \Gamma $ finitely generated subgroup with $\delta_{\Gamma_0}> \delta_\Gamma -\half \epsilon$. For Fuchsian groups being finitely generated is equivalent to being geometrically finite (a group is \textit{geometrically finite} if it admits a finitely sided polygon as a fundamental domain in $\mathbf{H}^2$).  Both $G(2)$ and $\Gamma_0$ are such. Susskind showed in \cite{Susskind} that the intersection of two geometrically finite subgroups of a discrete group in $\Isom(\mathbf{H}^n)$ is geometrically finite itself(in fact in dimension 2 it follows from the work of Greenberg \cite{Greenberg}). Hence, $\Gamma_1=G(2) \cap \Gamma_0$ is geometrically finite. 

\par 
Stadlbauer (\cite{Stadlbauer}, Theorem 6.1) proved that if a Kleinian group $G$ is essentially free(and geometrically finite Fuchsian groups are such, see \cite{Stadlbauer} for definition) and $N \unlhd G$ is a normal subgroup,  then $\delta_N=\delta_G$ if and only if $G/N$ is amenable. We can apply this to $\Gamma_1 = \Ker \left\{ \Gamma_0 \to \SL_2(\bbZ/2\bbZ)\right\}$, and then to the commutator subgroup $\Gamma_1'<\Gamma_1$. Hence, $\delta_{\Gamma_1'} = \delta_{\Gamma_1}= \delta_{\Gamma_0} > \delta_\Gamma - \half \epsilon$. 
\par 

Now we apply Sullivan again, to extract a finitely generated subgroup $\Gamma_\epsilon < \Gamma_1'$ with $\delta_{\Gamma_\epsilon} > \delta_{\Gamma_1'}- \half \epsilon > \delta_\Gamma - \epsilon$. Since $\Gamma_\epsilon < G(2)'$, by Lemma~\ref{noparabolics} it has no parabolic elements. This group is convex cocompact, since in dimension $2$ a subgroup is convex cocompact if and only if it is finitely generated and contains no parabolic elements. 
\end{proof}

\section{Approximation of Specific Points in the Torus}\label{BAD}
Theorems~\ref{maintheorem} and~\ref{maintheoremB} only provide us information on approximation properties of Lebesgue almost every point. In this section we wish to characterize Diophantine properties of specific points. We consider $\Gamma<SL_d(\mathbb{Z})$ (with $d\geq 2$) acting on a $d$-torus $\mathbb{T}^d$. For technical reasons we rather use $\sup$-norm on $\bbT^d$ than the Euclidean one. Clearly, this does not affect the approximation properties. For $y\in \bbT^d,r>0$ we denote by $\Boxx (y,r)$ the ball of radius $\half r$ in the $d-$torus in the $\sup-$norm. Note that $m(\Boxx (y,r))=r^d$
\medskip

Naturally, we can not expect a uniform rate of approximation for all target points and all origin points in the torus. Theorem~\ref{wellapproximable} states that under mild assumptions on the acting group,  for given $M$, we can produce a uniform bound for the approximation rate for all targets and all $M-$Diophantine origins. The proof of Theorem~\ref{wellapproximable} relies on two results. First result controls the Fourier coefficients of the measures obtained from a random walk $\mu$ on the torus. If the initial distribution $\delta_x$ is concentrated on a Diophantine point $x\in \bbT^d$, then the Fourier coefficients of the distribution after $k$ steps have exponential decay in $k$. More precisely,
\begin{theorem}\cite{BFLM}\label{bflm}
	Let $\Gamma<SL_d(\mathbb{Z})$ finitely generated group. satisfying (SI) and (PE). Let $\mu \in Prob(\Gamma)$ finitely supported measure, s.t. the support generates $\Gamma$. Let $x\in \mathbb{T}^d$ be $M-$Diophantine.  
	Let $\nu_k = \mu^{*k}*\delta_x$. Then, there exist $c_2>0$, depending only on $\Gamma$ and $\mu$, and $K_0 \in \bbN$ s.t. for $k>K_0$  we have for any $B\in \mathbb{N}$
	\begin{equation*}
	\max_{b\in \mathbb{Z}^d \setminus 0, 0<\|b\|_\infty <B} |\hat {\nu_k}(b) | \leq Be^{-c_2k/M}
	\end{equation*}
\end{theorem}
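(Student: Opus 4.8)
The plan is to follow \cite{BFLM}: the statement is a quantitative packaging of its main theorem, so I describe the architecture rather than grind through it. Write $e(t)=e^{2\pi i t}$ and $\nu_k=\mu^{*k}*\delta_x$. First I would reduce the assertion to a scale-free \emph{dichotomy}, which is the actual content of \cite{BFLM}: there are constants $C_0\ge 1$ and $\delta_0>0$, depending only on $\Gamma$ and $\mu$, such that for every $x\in\bbT^d$, every $n\in\bbN$, every $b\in\bbZ^d\setminus 0$ with $\|b\|_\infty<e^{\delta_0 n}$, and every $\epsilon\in(0,1)$, if $|\hat\nu_n(b)|>\epsilon$ then there is an integer $q$ with $1\le q\le\epsilon^{-C_0}$ and $\min_{p\in\bbZ^d}\|qx-p\|_\infty<e^{-\delta_0 n}\|b\|_\infty$. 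Granting this, put $c_2=\delta_0/C_0$ and choose $\epsilon=\|b\|_\infty^{1/(C_0M)}e^{-\delta_0 k/(C_0M)}$, so that $\epsilon\le\|b\|_\infty\,e^{-c_2 k/M}$; if $|\hat\nu_k(b)|>\epsilon$ the dichotomy returns $q\le\epsilon^{-C_0}$ with $q^{-M}\ge\|b\|_\infty e^{-\delta_0 k}>\min_p\|qx-p\|_\infty$, which contradicts $M$-Diophantineness of $x$ for all but finitely many $q$ and fails for the remaining bounded range of $q$ once $k$ is large. A short bookkeeping --- treating $B$ in a bounded and an unbounded range separately and using the trivial bound $|\hat\nu_k(b)|\le 1$ --- shows the resulting threshold $K_0$ may be taken independent of $B$, and one concludes $|\hat\nu_k(b)|\le\epsilon\le B\,e^{-c_2 k/M}$ for all $b$ with $0<\|b\|_\infty<B$ and all $k>K_0$.

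To set up the main estimate, note that since $\Gamma$ acts on $\bbT^d$ through $\SL_d(\bbZ)$ one has $\hat\nu_k(b)=\bbE\bigl[e(\langle{}^{t}g_k\cdots{}^{t}g_1\,b,\,x\rangle)\bigr]$ with $g_1,\dots,g_k$ independent with common law $\mu$, so $\hat\nu_k(b)$ is an exponential average over the position $w\,b$ of the random walk on $\bbZ^d$ driven by the pushforward $\check\mu$ of $\mu$ under transpose. Since $\mu$ is finitely supported and $\Gamma$ satisfies (SI) and (PE), so does ${}^{t}\Gamma$, and I may invoke the theory of random matrix products: a unique $\check\mu$-stationary measure $\bar\nu$ on $\mathbb{P}^{d-1}$ (Furstenberg), a positive and simple top Lyapunov exponent $\lambda_1$ (Guivarc'h--Raugi), exponential large deviations for $\frac1n\log(\|w\,b\|/\|b\|)$ and exponential equidistribution of the direction $[w\,b]$ towards $\bar\nu$ (Le Page), and Guivarc'h's regularity estimate bounding the $\bar\nu$-mass of any $r$-neighbourhood of a hyperplane by $C r^{\alpha_0}$.

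The dichotomy itself I would prove by a Bourgain-type $L^2$-flattening of the renormalised Fourier energy $\mathcal{E}_\delta(\eta):=\sum_{0<\|b\|_\infty\le\delta^{-1}}|\hat\eta(b)|^2$ along dyadic scales $\delta$. From $\widehat{\mu^{*L}*\eta}(b)=\bbE_w[\hat\eta(w\,b)]$ and Cauchy--Schwarz one gets $\mathcal{E}_\delta(\nu_{n+L})\le\bbE_w\sum_{b\in w(\{\|\cdot\|_\infty\le\delta^{-1}\})}|\widehat{\nu_n}(b)|^2$; for $L$ a large fixed multiple, a typical $w$ is strongly contracting, so the image ellipsoid is a thin tube about the (random, $\bar\nu$-distributed) dominant line of $w$, of length $\asymp e^{\lambda_1 L}\delta^{-1}$, and the regularity of $\bar\nu$ forces $\mathcal{E}_\delta(\nu_{n+L})\le\tfrac12\,\mathcal{E}_{e^{-\lambda_1 L}\delta}(\nu_n)$ \emph{unless} the Fourier mass of $\nu_n$ at scales between $\delta^{-1}$ and $e^{\lambda_1 L}\delta^{-1}$ already concentrates near an integer hyperplane of bounded complexity. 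Iterating over $\Theta(n)$ blocks, either $|\hat\nu_n(b)|$ falls below the target threshold or $\nu_n$ concentrates near the finite set $R_q$ at scale $e^{-\delta_0 n}\|b\|_\infty$ for some $q\lesssim\epsilon^{-C_0}$; and since $\nu_n$ is the $n$-step push-forward of $\delta_x$ by $\Gamma<\SL_d(\bbZ)$, which permutes $R_q$ and distorts torus distances by at most $\|g\|\lesssim e^{\lambda_1 n+o(n)}$, together with the Lipschitz dependence of $\widehat{\mu^{*n}*\delta_x}(b)$ on $x$ (for rational base points this transform is governed by a finite orbit), such concentration forces $x$ itself within $e^{-\delta_0 n}\|b\|_\infty$ of $R_q$. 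This is the dichotomy.

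The hard part is the flattening step: making quantitative the alternative ``$\mathcal{E}_\delta$ decreases by a definite factor, or $\nu_n$ concentrates near rationals of bounded denominator''. This is Bourgain's discretised sum--product / $L^2$-flattening machinery transplanted to the $\SL_d(\bbZ)$-action on $\bbT^d$, where the non-concentration of the stationary measure $\bar\nu$ near hyperplanes (hence the hypotheses (SI) and (PE)) plays the role of multiplicative spreading; a second delicate point is propagating the complexity and denominator bounds, together with the accumulated errors, through the $\Theta(n)$-fold iteration so that the final rational approximation of $x$ contradicts $M$-Diophantineness precisely when $k>K_0$, with the clean exponent $c_2/M$. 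The remaining ingredients --- the reduction in the first paragraph and the random-matrix inputs --- are by now standard.
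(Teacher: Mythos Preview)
The paper does not prove this statement at all: it is quoted from \cite{BFLM} and used as a black box in the proof of Theorem~\ref{wellapproximable}. There is no ``paper's own proof'' to compare against. Your proposal is therefore not in competition with anything in the paper; it is an attempt to sketch the argument of \cite{BFLM} itself.

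As a sketch of \cite{BFLM}, your outline is broadly correct in spirit --- the reduction to a dichotomy (large Fourier coefficient forces proximity to rationals of bounded denominator), the role of the random matrix product inputs (Furstenberg, Guivarc'h--Raugi, Le Page, regularity of the stationary measure), and the iterated $L^2$-flattening are the right structural pieces. That said, the derivation in your first paragraph is slightly off: with $\epsilon=\|b\|_\infty^{1/(C_0M)}e^{-\delta_0 k/(C_0M)}$ and $\|b\|_\infty<B$, the inequality $\epsilon\le\|b\|_\infty\,e^{-c_2k/M}$ does not in general imply $\epsilon\le B\,e^{-c_2k/M}$ unless you also control the regime where $\|b\|_\infty$ is small; more importantly, the dichotomy in \cite{BFLM} requires $\|b\|_\infty<e^{\delta_0 n}$, so your choice of $\epsilon$ and the resulting bound $q\le\epsilon^{-C_0}$ need to be reconciled with that constraint and with the definition of $M$-Diophantine used here (finitely many $q$ with $x$ within $q^{-M}$ of $R_q$). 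These are bookkeeping issues rather than conceptual gaps, but in a full write-up they would need to be handled carefully. For the purposes of the present paper, however, none of this is needed: you may simply cite \cite{BFLM}.
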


\begin{definition}
	Let $\nu$ be a probability measure on $\mathbb{T}^d$ and $m$ be the Lebesgue measure. The \textit{discrepancy of } $\nu$ is 
	\begin{equation*}
	D(\nu):=\sup_{P\in J}\left| \nu(P)-m(P) \right|
	\end{equation*} 
	where $J$ is the set of half-open boxes in $\mathbb{T}^d$
	\begin{equation*}
	J:=\left\{ \prod_{i=1}^d [x_i,y_i) :0 \leq  x_i < y_i \leq 1 \right\}
	\end{equation*}
\end{definition}
The second ingredient of the proof is the Erdos-Turan-Koksma inequality. It relates the discrepancy between the distribution $\nu$ and the Lebesgue measure on the torus to the Fourier coefficients of $\nu$ . 

\begin{theorem}[Erdos-Turan-Koksma inequality]\label{etk}
	Let $\nu$ be an atomic probability measure on $\mathbb{T}^d$ with rational values. Let $B$ be an arbitrary positive integer. Then
	
	\begin{equation*}	D(\nu)\leq	C_d 	\left( \frac{1}{B}+ \sum_{0<\|b\|_{\infty}\leq B}\frac{\left| \hat{\nu}(b) \right| }{r(b)} \right)
	\end{equation*}
	where $C_d$ is some explicit constant depending on the dimension $d$.
	\begin{equation*}
	r(b)=\prod_{i=1}^d\max\{1,|b_i|\}\quad\mbox{for}\quad b=(b_1,\ldots,b_d)\in\mathbb{Z}^d.
	\end{equation*}	
	
\end{theorem}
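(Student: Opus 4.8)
The plan is to prove Theorem~\ref{etk} by the classical device of sandwiching the indicator of a box between two trigonometric polynomials of degree at most $B$ in each variable whose means agree with the Lebesgue volume up to $O_d(1/B)$ and whose Fourier coefficients are dominated by $r(b)^{-1}$. The first step is to reduce to the one-dimensional extremal problem: for each coordinate $i$ and each interval $I_i=[x_i,y_i)\subset\mathbb{T}^1$, I need a pair of trigonometric polynomials $\phi_i^{\pm}$ of degree $\le B$ with $\phi_i^-\le\mathbf{1}_{I_i}\le\phi_i^+$, with $\int_{\mathbb{T}^1}(\phi_i^+-\phi_i^-)\,dm\le c/B$, and with $|\widehat{\phi_i^{\pm}}(n)|\le c/\max\{1,|n|\}$ for all $n$, where $c$ is absolute. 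This is exactly the periodic Beurling--Selberg extremal function (Vaaler's form): the majorant/minorant of an interval indicator built from the Fejér kernel and the sawtooth function, whose Fourier coefficients differ from those of $\mathbf{1}_{I_i}$ by at most $1/(B+1)$, combined with the elementary bound $|\widehat{\mathbf{1}_{I_i}}(n)|\le 1/(\pi|n|)$. I would state this as a lemma and either cite it or reproduce the short construction.

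The second step is the $d$-dimensional assembly. For the majorant, since each $\phi_i^+$ is nonnegative, the product $g^+(x):=\prod_{i=1}^d\phi_i^+(x_i)$ satisfies $g^+\ge\prod_i\mathbf{1}_{I_i}=\mathbf{1}_P$, has $\widehat{g^+}(b)=\prod_i\widehat{\phi_i^+}(b_i)$, hence $|\widehat{g^+}(b)|\le c^d/r(b)$ and $\widehat{g^+}(0)=\prod_i(|I_i|+O(1/B))=m(P)+O_d(1/B)$. For the minorant I would use the telescoping identity $\prod_i\phi_i^+-\prod_i\mathbf{1}_{I_i}=\sum_{k}\big(\prod_{i<k}\mathbf{1}_{I_i}\big)(\phi_k^+-\mathbf{1}_{I_k})\big(\prod_{i>k}\phi_i^+\big)$, bound each summand above by $(\phi_k^+-\phi_k^-)(x_k)\prod_{i\ne k}\phi_i^+(x_i)$ using $0\le\mathbf{1}_{I_i}\le\phi_i^+$ and $0\le\phi_k^+-\mathbf{1}_{I_k}\le\phi_k^+-\phi_k^-$, and set $g^-(x):=g^+(x)-\sum_{k=1}^d(\phi_k^+-\phi_k^-)(x_k)\prod_{i\ne k}\phi_i^+(x_i)$. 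Then $g^-\le\mathbf{1}_P$, $g^-$ has degree $\le B$ in each variable, $\widehat{g^-}(0)=m(P)+O_d(1/B)$ since each subtracted term has mean $O_d(1/B)$, and $|\widehat{g^-}(b)|\le C_d/r(b)$ because only one factor per summand has an "active" (large) frequency.

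The third step feeds $\nu$ into the sandwich. From $\int g^-\,d\nu\le\nu(P)\le\int g^+\,d\nu$ and $\int g^{\pm}\,d\nu=\widehat{g^{\pm}}(0)+\sum_{0<\|b\|_\infty\le B}\widehat{g^{\pm}}(b)\,\overline{\widehat\nu(b)}$ one gets
$$|\nu(P)-m(P)|\le\max_{\pm}\Big(|\widehat{g^{\pm}}(0)-m(P)|+\sum_{0<\|b\|_\infty\le B}|\widehat{g^{\pm}}(b)|\,|\widehat\nu(b)|\Big)\le\frac{C_d}{B}+C_d\sum_{0<\|b\|_\infty\le B}\frac{|\widehat\nu(b)|}{r(b)},$$
and taking the supremum over $P\in J$ gives the theorem. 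The hypothesis that $\nu$ is atomic with rational values is never used; it only records the application to random walks, and I would keep it for that reason alone.

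The main obstacle is the first step: producing one-dimensional trigonometric polynomials that simultaneously (a) one-sidedly approximate an interval indicator, (b) have the prescribed degree $B$, (c) have $L^1$ defect $O(1/B)$, and (d) have Fourier coefficients decaying like $1/|n|$. All four at once is the content of the Beurling--Selberg problem and is the only genuinely nontrivial input; the $d$-dimensional assembly (product for the majorant, telescoping for the minorant) and the closing Fourier expansion are routine, the sole care being to keep every implied constant dependent on $d$ alone.
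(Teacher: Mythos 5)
The paper states the Erd\H{o}s--Tur\'an--Koksma inequality as a classical result and supplies no proof, so there is no in-text argument to compare against; I will instead just review your sketch on its own terms. Your outline is a correct account of the standard modern proof via periodic Beurling--Selberg (Vaaler) extremal functions. The three steps fit together: the one-dimensional majorant/minorant $\phi^{\pm}$ of degree $\le B$ with $\int_{\mathbb{T}}(\phi^+-\phi^-)\,dm\le 2/(B+1)$ and $|\widehat{\phi^{\pm}}(n)|\le c/\max\{1,|n|\}$ do exist (Vaaler's theorem plus the elementary bound $|\widehat{\chi_I}(n)|\le 1/(\pi|n|)$); the telescoping construction of the minorant is sound because each $\phi_i^+\ge \chi_{I_i}\ge 0$, so every factor in the factor-by-factor comparison is nonnegative and the product inequality is legitimate; and the closing step, feeding $\nu$ through $\int g^-\,d\nu\le\nu(P)\le\int g^+\,d\nu$ and expanding in Fourier series, gives exactly the claimed bound, with a constant $C_d$ growing exponentially in $d$ (which matches the classical form of the theorem). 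You are also right that the hypothesis that $\nu$ is atomic with rational values plays no role in the proof --- the inequality holds for any Borel probability measure on $\mathbb{T}^d$; the hypothesis merely records the setting in which the paper applies it, namely $\nu_k=\mu^{*k}*\delta_x$ for rational $x$. The one place where your write-up is genuinely incomplete is the one-dimensional lemma itself: you describe its properties accurately and correctly flag it as the sole non-routine input, but in a final version you would need to either reproduce Vaaler's construction or give a precise citation (e.g.\ Vaaler's 1985 survey, Theorem 18--19, or Montgomery's \emph{Ten Lectures}, Chapter 1), since the simultaneous requirements of one-sidedness, degree $\le B$, $L^1$-defect $O(1/B)$, and Fourier decay $O(1/|n|)$ are exactly what makes the extremal problem nontrivial.
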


Now we are ready to prove Theorem~\ref{wellapproximable}

\begin{proof} (of Theorem~\ref{wellapproximable})
	Let $\mu$ be the uniform measure on the finite set of generators of $\Gamma$. Let $\nu_k = \mu^{*k}*\delta_x$. Let $\lambda=\max\{ \log \|g\| : g \in supp(\mu) \} $. We will show that $C_\Gamma<\frac{c_2}{d(d+2)\lambda}$ satisfies the theorem, where $c_2$ is the constant from Theorem~\ref{bflm}. 
	
	By submultiplicativity of matrix norm, for every $k>0$
	\begin{equation*}
	\max \setdef{\|g\| }{ g\in supp (\mu^{*k}) }   \leq e^{\lambda k}
	\end{equation*}
	Assume by contradiction that there exists a point $y\in \bbT^d$ which is not $(\Gamma,\frac{C_\Gamma}{M})$-fast approximable. Then, there exists $K>0$, such that for all $k> K$ we have 
	\begin{equation*}
	\nu_k(\Boxx(y,e^{-\frac{\lambda k C_\Gamma}{M}})) =0
	\end{equation*}
	This gives us a lower bound for the discrepancy of $\nu_k$.
	\begin{equation}\label{discrlowerbound}
	D(\nu_k) \geq m(\Boxx(y,e^{-\frac{\lambda k C_\Gamma}{M}})) = e^{- \frac{\lambda k C_\Gamma d}{M} }
	\end{equation}	
	We will now estimate the upper bound for the discrepancy. By Theorem~\ref{etk},
	for every $B,k \in \mathbb{N}$  we have
	\[D(\nu_k)\leq	C_d \left( \frac{2}{B+1}+ \sum_{0<\|b\|_{\infty}\leq B}\frac{\left| \hat{\nu_k}(b) \right|}{r(b)}  \right) 
	\]
	Using $r(b)\geq 1$  and the bound of the Fourier coefficients from Theorem~\ref{bflm} for  $k $ large enough we have
	\[ D(\nu_k) \leq 	 \frac{2C_d}{B}+ C_d (2B+1)^d \cdot Be^{- \frac{c_2k}{M}} 
	\]
	Thus, combining with the lower bound from~\eqref{discrlowerbound}, we have
	\begin{equation}\label{tocontradict} 
	e^{ - \frac{\lambda k C_\Gamma d}{M} } \leq  \frac{2C_d}{B}+ 2^{2d} C_d B^{d+1} \cdot e^{-\frac{c_2k}{M}} 
	\end{equation}
	
	The inequality~\eqref{tocontradict} must hold for all $B \in \mathbb{N}$ and all $k>\max(K_0,K)$, in particular for $B=B(k)=4C_d'(k) e^{\frac{  \lambda k C_\Gamma d}{M} }$ (where we choose the smallest $C_d'(k) \geq C_d$, such that $B(k)$ is an integer. Note that $C'_d(k) \leq 2 C_d$ for large $k$. Then, inequality~\eqref{tocontradict} becomes
	\begin{equation*}
	e^{- \frac{ \lambda k C_\Gamma d}{M}} \leq  \frac{1}{2}e^{-\frac{ \lambda k C_\Gamma d}{M}}+ 2^{2d} C_d (4C_d'(k))^{d+1}  e^{ \frac{\lambda k C_\Gamma d(d+1) -c_2 k }{M}} 
	\end{equation*}
	Multiplying both sides by $e^{\frac{ \lambda k C_\Gamma d}{M}} $ and using $C_d'(k) \leq 2C_d$ we get
	\begin{equation}\label{lasteqtocontradict} 
	1 \leq  \frac{1}{2}+  2^{5d+3} (C_d)^{d+2}  e^{ \frac{(\lambda C_\Gamma d(d+2)  -c_2) k}{M}} 
	\end{equation}
The assumption $C_\Gamma < \frac{c_2}{d(d+2) \lambda}$ implies that the exponent in the right hand side of inequality~\eqref{lasteqtocontradict} is negative, so the above inequality does not hold for arbitrarily large $k$, which gives us the contradiction.
\end{proof}

\section{Spectral Optimality}\label{spectraloptimality}

\subsection{Fundamental inequalities}\hfill{}\par
Consider symmetric finitely supported random walk $\mu$ on a group $\Gamma$.  Denote by $\lambda_\Gamma(\mu)$ the Markov operator, and $\|\lambda_\Gamma(\mu)\|$  the spectral radius of the random walk. Assume $\Gamma$ has a left invariant metric $d$. Assume that $supp(\mu)\subset B_n$. The following inequalities are well known.

\begin{equation}\label{fundiq}
- 2 \log \| \lambda_\Gamma(\mu)\| \stackrel{\text{(1)}}{\leq} h(\mu) \stackrel{\text{(2)}}{\leq} \delta_\Gamma l(\mu) \stackrel{\text{(3)}}{\leq} \delta_\Gamma n
\end{equation}
Part (1) of above inequality was proved by Avez in \cite{Avez}, part (2) is due to Guivarc'h (known as the fundamental inequality of random walks) and part (3) is immediate since the drift is not greater than the maximal length of the elements in the support of $\mu$. 
\medskip

For groups with property of Rapid Decay these inequalities turn out to be asymptotically sharp.

\subsection{Property of rapid decay}\hfill{}\par
Let $\Gamma$ be a discrete group, and $l$ a length function (i.e. $l:\Gamma \to \mathbb{R}_+$, with $l(e)=0, l(g)=l(g^{-1})$, and $l(gh)\leq l(g)+l(h)$ for any $g,h \in \Gamma$. We say that $\Gamma$ has  \textit{property of Rapid Decay(RD) with respect to }$l$ if there exists a polynomial $P(n)$ such that for any $f$ in the complex group algebra $\mathbb{C}\Gamma$ supported on elements of length shorter than $n$ the following inequality holds:
\[
\|f\|_* \leq P(r) \|f\|_2
\]
where $\|f\|_*$ denotes the operator norm of $f$ acting by left convolution on $l^2(\Gamma)$. 

If $d$ is a left invariant metric on $\Gamma$, one can consider $l(g)=d(g,e)$ as the length function. Property RD was first established for free groups by Haagerup, and later Jollisant and de La Harpe(\cite{RD}, \cite{delaHarpe}) proved it for Gromov hyperbolic groups.
\medskip

	In particular, for fixed $n$, let $f(g)=\frac{1}{\#B_n} \chi_{B_n}(g)$, where $\chi_{B_n}$ is the characteristic function of $B_n$. The convolution by $f$ is the operator $\lambda_\Gamma(\mu_n)$ where $\mu_n$ is the uniform distribution on the ball of radius $n$. For any $\epsilon>0$
	\[
	\|f\|_2^2 = \sum_{g\in B_n}\frac{1}{|B_n|^2} = \frac{1}{|B_n|}\leq e^{-(\delta_\Gamma-\epsilon)n + O(1)}
	\]
	Hence we have,
	\[
\|	\lambda_\Gamma(\mu_n)\| \leq  e^{-\half (\delta-\epsilon)n +O(\log n)}
	\]
	Since $\epsilon$ is arbitrary, we just proved the following:

\begin{prop}
	Assume $\Gamma$ has property RD. Then for any $\epsilon>0$
	\[ \rho_{\lambda_\Gamma}(n) \leq e^{-\half (\delta_\Gamma -\epsilon) n} \]
\end{prop}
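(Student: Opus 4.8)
The plan is to carry out, in a self-contained way, the computation sketched just above the statement: apply property RD to the uniform measures on metric balls. Fix $\epsilon>0$. For each $n$ let $\mu_n$ be the uniform probability measure on $B_n$, viewed as the element $f_n=\tfrac{1}{\#B_n}\chi_{B_n}\in\bbC\Gamma$; it is supported on elements of length $\le n$, and left convolution by $f_n$ is exactly the Markov operator $\lambda_\Gamma(\mu_n)$, so that $\|\lambda_\Gamma(\mu_n)\|=\|f_n\|_*$. Property RD supplies a polynomial $P$ with $\|f_n\|_*\le P(n)\|f_n\|_2$, while a one-line computation gives $\|f_n\|_2^2=\sum_{g\in B_n}(\#B_n)^{-2}=(\#B_n)^{-1}$. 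Hence $\rho_{\lambda_\Gamma}(n)\le\|\lambda_\Gamma(\mu_n)\|\le P(n)(\#B_n)^{-1/2}$.

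Next I would feed in the lower bound on the growth of balls coming from the critical exponent. Since $\delta_\Gamma=\limsup_n\tfrac1n\log\#B_n$, there are arbitrarily large $N$ with $\#B_N\ge e^{(\delta_\Gamma-\epsilon/4)N}$, and for each such $N$ the previous line yields
\[
\rho_{\lambda_\Gamma}(N)\le P(N)\,e^{-\half(\delta_\Gamma-\epsilon/4)N}.
\]
Because $P(N)=e^{O(\log N)}$, for $N$ large this right-hand side is at most $e^{-\half(\delta_\Gamma-\epsilon)N}$, so the desired bound holds at least along the subsequence of these $N$.

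To upgrade this to all sufficiently large $n$ I would invoke the submultiplicativity $\rho_{\lambda_\Gamma}(n+m)\le\rho_{\lambda_\Gamma}(n)\,\rho_{\lambda_\Gamma}(m)$ recorded earlier, which by Fekete's subadditivity lemma gives $\lim_n\tfrac1n\log\rho_{\lambda_\Gamma}(n)=\inf_n\tfrac1n\log\rho_{\lambda_\Gamma}(n)$. Evaluating this infimum at the good values $N$ bounds the limit by $-\half(\delta_\Gamma-\epsilon)$ (after renaming $\epsilon$ to swallow a bit of extra slack), whence $\rho_{\lambda_\Gamma}(n)\le e^{-\half(\delta_\Gamma-\epsilon)n}$ for all large $n$; as $\epsilon>0$ is arbitrary, the Proposition follows.

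I do not expect a genuine obstacle here: the entire content is the RD inequality applied to the natural test functions $\chi_{B_n}$. The only points needing a little care are that the $\limsup$ defining $\delta_\Gamma$ only delivers the ball lower bound along a subsequence — which is why the submultiplicativity of $\rho_{\lambda_\Gamma}$ is used to fill the gaps — and the harmless bookkeeping needed to absorb the polynomial factor $P(n)$ together with the additive $O(1)$'s, which works precisely because the statement tolerates the arbitrarily small loss $\epsilon$ in the exponent.
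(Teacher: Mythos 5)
Your argument is correct and follows the same basic strategy as the paper: apply the RD inequality to the uniform measure on $B_n$, compute $\|f_n\|_2^2 = 1/\#B_n$, and absorb the polynomial factor $P(n)$ into the $\epsilon$-loss in the exponent. The one place you deviate is in how you handle $\delta_\Gamma$ being defined as a $\limsup$: you first obtain the bound only along a subsequence of $N$ with $\#B_N \ge e^{(\delta_\Gamma - \epsilon/4)N}$, and then upgrade to all large $n$ via Fekete's lemma applied to the subadditive sequence $\log \rho_{\lambda_\Gamma}(n)$. The paper's sketch simply asserts $\#B_n \ge e^{(\delta_\Gamma - \epsilon)n + O(1)}$ for every $n$ without comment. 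Your patch is legitimate, but it is worth noting it is also avoidable: for a word length one has $B_{n+m} \subseteq B_n \cdot B_m$, so $\log \#B_n$ is itself subadditive, the $\limsup$ is in fact a limit equal to $\inf_n \tfrac{1}{n}\log\#B_n$, and hence $\#B_n \ge e^{\delta_\Gamma n}$ for \emph{every} $n$; moreover, in the paper's principal setting (a hyperbolic group with the metric induced by a cocompact action on a quasiruled hyperbolic space) the exact asymptotics $\#B_n = e^{\delta_\Gamma n + O(1)}$ are available from Coornaert's count used throughout. Either of these gives the $n$-by-$n$ lower bound on $\#B_n$ directly and renders the subsequence-plus-Fekete detour unnecessary. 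So both proofs reach the same place; yours trades a piece of geometric input for a formal submultiplicativity argument, which is slightly longer but arguably more self-contained.
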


Theorem~\ref{mainradius} gives a sharper bound for convex cocompact subgroups of $SL_2(\mathbb{Z})$ (or groups that act by isometries cocompactly on a proper quasiruled hyperbolic space $X$), for such $\Gamma$ we have
\[
\rho_{\lambda_\Gamma}(n) \leq n^2 e^{-\half\delta_\Gamma n +O(1)} 
\] 

\subsection{Optimality of random walks}\hfill{}\par
Much work has been done to achieve equality in (2) of the inequality~\eqref{fundiq}. It was shown in \cite{Gouezel} that in hyperbolic case one cannot achieve the equality with a finitely supported measure, unless the group is virtually free. However, one can ask if approaching the equality asymptotically is possible.  
\medskip
By this we mean finding a sequence of finitely supported measures $\mu_n$, so that $\frac{h(\mu_n)}{l(\mu_n)}\to \delta_\Gamma$. When this happens, the random walks $\mu_n$ are thought of as well spread in the group. Theorem~\ref{mainradius} shows that one can approach  the equality asymptotically in a more general inequality $-2\log \| \lambda_\Gamma(\mu)\| \geq \delta_\Gamma  n $, namely one can find a sequence of measures $\mu_n$ supported on $B_n$, so that $\frac{-2\log(\| \lambda_\Gamma(\mu_n)\| )}{n} \to \delta_\Gamma$.  
\medskip

We remark that the latter approximation is indeed stronger.

\begin{remark}
There exists sequence of measures $\mu_n$ on a free group on two generators, such that $\frac{h(\mu_n)}{l(\mu_n)} = \delta_\Gamma$, but $\frac{-2\log(\| \lambda_\Gamma(\mu_n)\| )}{l(\mu_n)} \to 0$
\end{remark}

To see this, consider the simple random walk on the free group on two generators $\Gamma=\left<a^{\pm 1},b^{\pm 1} \right>$ with the corresponding word metric. It is an easy exercise that the equality is achieved in both inequalities simultaneously. We will perturb the law $\mu$ preserving one of the equalities but not the other. 
\par 
We use the Markov stopping time (see \cite{Forghani}). For each $n\in \bbN$, we define the following cut set:
\[
C(n) = C_n^a \cup \{a\}
\]
where $C_n^a$ is the set of all reduced words of length $n$ that don't start with $a$. Markov stopping time creates a new law of random walk $\mu_n$. Intuitively, one can think of sample paths in the new random walk being the same paths as in the old one with the same distribution, but with rescaled time. Each unit of time in the new path corresponds to starting the walk from identity and hitting the cutting set. Forghani proved in \cite{Forghani} that both the entropy and the drift of the new random walk are obtained by multiplication of the initial entropy and drift by the expected value of the stopping time. Therefore for each $\mu_n$ the equality in the fundamental inequality still holds. 
\par 
It is easy to see that the spectral radius is bounded from below by $\frac{1}{4}$, regardless of $n$(test $\pi_\Gamma(\mu_n)$ against the characteristic function supported on powers of $a$), and since $l(\mu_n)\to \infty$ the claim follows.

\subsection{Optimal ergodic theorems}\hfill{}\par
Let $\Gamma<SL_2(\mathbb{Z})$, and $\pi_0$ be the Koopman representation on the 2-torus. By Theorem~\ref{toruskoopman} we have $\rho_{\pi_0}(n)=\rho_\lambda(n)$. Since the measures $\mu_n$ in Theorem~\ref{mainradius} are uniform measures on the shells $S_n$ in $\Gamma$, the operators $\pi_0(\mu_n)$ can be viewed as averaging operators, and we can reformualte Theorem~\ref{mainradius} as a quantitative  ergodic theorem.

\begin{cor}
	Let $\Gamma < \SL_2(\mathbb{Z})$. There exists $k>0$, so that if we denote the shells $S_n=B_n\setminus B_{n-k}\subset \Gamma$. 
	Then for any $f\in L^2(\mathbb{T}^2,m)$ we have
	\[
		\left\| \frac{1}{|S_n|} \sum_{g\in S_n} f(g.x) - \int_{\mathbb{T}^2} f dm \right\|_2 
		\leq n^2 e^{-\half\delta_\Gamma n +O(1)} \|f\|_2.
	\]
\end{cor}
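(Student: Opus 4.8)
The plan is to derive this quantitative ergodic theorem directly from Theorem~\ref{mainradius} (equivalently Theorem~\ref{T:specialspec}) together with Theorem~\ref{toruskoopman}. First I would fix $k$ to be the value for which Lemma~\ref{coveringshadows} and Lemma~\ref{sizeofballs} hold, so that the shells $S_n = B_n \setminus B_{n-k}$ are the ones appearing in Theorem~\ref{mainradius}, and let $\mu_n$ denote the uniform probability measure on $S_n$. The key observation is purely formal: for $f \in L^2(\mathbb{T}^2,m)$, the averaging operator is exactly the Markov operator of the Koopman representation evaluated at $\mu_n$, i.e.
\[
	\frac{1}{|S_n|}\sum_{g \in S_n} f(g^{-1}.x) = \bigl(\pi(\mu_n) f\bigr)(x).
\]
(One needs to be mildly careful about whether one sums over $g$ or $g^{-1}$, but since $\mu_n$ is supported on a symmetric set of shells — or one simply replaces $S_n$ by $S_n^{-1}$, which has the same cardinality and the same asymptotics — this is harmless and I would note it in one line.)

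Next I would split $f = \bigl(\int_{\mathbb{T}^2} f\,dm\bigr) + f_0$ into its mean and its projection $f_0 = f - \int f\,dm \in L^2_0(\mathbb{T}^2,m)$. The constant part is fixed by every $\pi(g)$, hence by $\pi(\mu_n)$, so $\pi(\mu_n) f - \int f\,dm = \pi_0(\mu_n) f_0$. Taking norms,
\[
	\left\| \frac{1}{|S_n|}\sum_{g \in S_n} f(g.x) - \int_{\mathbb{T}^2} f\,dm \right\|_2 = \|\pi_0(\mu_n) f_0\|_2 \le \|\pi_0(\mu_n)\| \cdot \|f_0\|_2 \le \|\pi_0(\mu_n)\| \cdot \|f\|_2,
\]
where the last step uses that orthogonal projection does not increase the $L^2$ norm. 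Now I invoke Corollary~\ref{koopmanbdry} (or directly Theorem~\ref{toruskoopman} combined with Theorem~\ref{specradthm}) to get $\|\pi_0(\mu_n)\| = \|\lambda(\mu_n)\| \le e^{-\frac{1}{2}\delta_\Gamma n + 2\log n + O(1)} = n^2 e^{-\frac{1}{2}\delta_\Gamma n + O(1)}$, which is precisely the claimed bound.

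The only genuine subtlety — and the step I would flag as requiring the most care — is that Theorems~\ref{mainradius} and~\ref{T:specialspec} are stated for $\Gamma$ convex cocompact in $\mathrm{Isom}(\mathbf{H}^2)$, whereas the corollary is stated for an arbitrary $\Gamma < \SL_2(\mathbb{Z})$. For the general case one does not have the sharp spectral gap; however, one can still argue as follows. Either one restricts attention to convex cocompact $\Gamma$ (in which case the argument above is complete verbatim), or, for general $\Gamma$, one notes that Theorem~\ref{toruskoopman} still gives $\|\pi_0(\mu_n)\| = \|\lambda(\mu_n)\|$ for any finitely supported $\mu_n$ whose support generates $\Gamma$, and one would need the regular-representation estimate for this possibly non-convex-cocompact group — which is the content one gets by the reduction of \S~\ref{approximation} or, more cheaply, from property RD for hyperbolic groups as in the Spectral Optimality section, at the cost of an $\epsilon$ in the exponent. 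Since the corollary as stated asserts the clean bound with $\delta_\Gamma$ and no $\epsilon$, I would in fact state and prove it for the convex cocompact case, where everything goes through directly from Corollary~\ref{koopmanbdry} and Theorem~\ref{specradthm}, exactly as in the displayed chain of inequalities above; no further computation is needed.
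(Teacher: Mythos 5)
Your proof is correct and takes exactly the approach the paper intends: recognize the averaging operator as $\pi(\mu_n)$ (after noting that the shells are symmetric, since $d(g,e)=d(g^{-1},e)$ for a metric coming from an orbit map, so the $g$ vs.\ $g^{-1}$ distinction is immaterial), split $f$ into its mean and its mean-zero part, and apply $\|\pi_0(\mu_n)\|=\|\lambda(\mu_n)\|\le n^2 e^{-\frac{1}{2}\delta_\Gamma n+O(1)}$ from Theorem~\ref{toruskoopman} together with Theorem~\ref{specradthm}. You are also right to flag the convex cocompactness hypothesis: the statement handed to you omits it, but the version of this same corollary stated in the introduction (\S~\ref{spectralestimates}) explicitly assumes $\Gamma$ is convex cocompact, and that hypothesis is indeed required both to define the shells via Lemma~\ref{sizeofballs} and to invoke Theorem~\ref{specradthm}; so the omission in the \S~\ref{spectraloptimality} restatement is simply an imprecision in the paper, and your decision to prove the convex cocompact case is the right reading.
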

From the inequalities in~\eqref{fundiq} the convergence rate can't be faster than $e^{- \half \delta_\Gamma n}$. This suggests that averaging over shells in $\Gamma$ produces the most optimal ergodic theorem for this action. 

\bibliographystyle{plain}
\bibliography{diophantinetorus}

\end{document}